\newtheorem{thm}{Theorem}[section]
\newtheorem{pro}[thm]{Proposition}
\newtheorem{lm}[thm]{Lemma}
\newtheorem{cor}[thm]{Corollary}
\newtheorem{hyp}[thm]{Hypotheses}
\numberwithin{equation}{section}
\theoremstyle{remark}
\newtheorem{rem}[thm]{Remark}
\theoremstyle{definition}
\DeclareMathOperator*{\End}{End}
\DeclareMathOperator*{\Irr}{Irr}
\DeclareMathOperator*{\disc}{disc}
\DeclareMathOperator*{\Gal}{Gal}
\DeclareMathOperator*{\ad}{ad}
\DeclareMathOperator*{\scn}{sc}
\DeclareMathOperator*{\der}{der}
\DeclareMathOperator*{\Hom}{Hom}
\newcommand{\s}{\simeq}
\newcommand{\tsigma}{\widetilde{\sigma}}
\newcommand{\ma}{\mathfrak{a}}
\newcommand{\CC}{\mathbb{C}}
\newcommand{\RR}{\mathbb{R}}
\newcommand{\QQ}{\mathbb{Q}}
\newcommand{\ZZ}{\mathbb{Z}}
\newcommand{\ii}{\mathbf{\textit{i}}}
\def\bA{\bold A}
\def\bG{\bold G}
\def\bP{\bold P}
\def\bM{\bold M}
\def\bN{\bold N}
\def\a{\alpha}
\def\b{\beta}
\def\sC{\mathcal C}
\def\ds{\displaystyle}
\title[Invariance of $R$-groups for Classical Groups]{Invariance of $R$-groups between $p$-adic inner forms of quasi-split classical groups} 
\author[Kwangho Choiy and David Goldberg]{Kwangho Choiy and David Goldberg} 
\thanks{This work was partially written while K. Choiy was a postdoctoral fellow at ICERM at Brown University during the Spring 2013 semester program entitled ``Automorphic Forms,
Combinatorial Representation Theory, and Multiple Dirichlet Series.''}
\thanks{D. Goldberg partially supported by Simons Foundation Collaboration Grant 279153.}
\keywords{R-groups,  $p$-adic inner forms of classical groups, $L$-packets, tempered spectrum, elliptic spectrum}
\subjclass[2010]{Primary \textbf{22E50}; Secondary 22E35}
\address{Kwangho Choiy\\
Department of Mathematics\\
Oklahoma State University\\
Stillwater, OK 74078-1058\\
U.S.A.}
\email{kwangho.choiy@okstate.edu}
\address{David Goldberg\\
Department of Mathematics \\
Purdue University\\
West Lafayette, IN 47907\\
U.S.A.}
\email{goldberg@math.purdue.edu}
\date{\today}
\begin{document}
\maketitle  

\begin{abstract} 
We study the reducibility of parabolically induced representations of non-split inner forms of quasi-split classical groups.  The isomorphism of Arthur $R$--groups, endoscopic $R$--groups and Knapp-Stein $R$--groups is established, as well as showing these $R$--groups are isomorphic to the corresponding ones for the quasi-split form.  This shows $R$--groups are an invariant of the $L$--packets.    The results are applied to classify the  elliptic spectrum.

\end{abstract}
\section{Introduction}
We continue our study of $L$--packets and reducibility for non quasi-split inner forms of reductive $p$--adic groups.  In \cite{goldberg-choiy-1} we examined the case of inner forms of $SL_n.$  There we found a relation between the structure of the tempered $L$--packets of $SL_n(F)$ and those of $SL_r(D),$ where $D$ is a central division algebra of dimension $d^2$ over $F,$ and $r=n/d.$ Our method was to use the  transfer of Plancherel measures via a Jacquet-Langlands type correspondence developed by the first named author in \cite{choiy}. This allows us to determine the Knapp-Stein $R$--groups for $SL_r(D)$ from those of $SL_n(F).$  The same result was proved by Chao and Li \cite{chao-li} using a different method, and they showed there are $L$--packets of the non quasi-split inner form whose $R$-groups are strictly smaller than those of the corresponding $L$--packet of $SL_n(F).$   Here we turn to the case of the split classical groups $SO_n$ and $Sp_{2n},$ as well as the non-split, quasi-split special orthogonal group $SO_{2n}^*.$  In these cases we can show the $R$--groups are invariant under the transfer to the non quasi-split inner form.  Further, we can show the intertwining algebras are preserved.  This shows the theory of reducibility of induced from discrete series representations, and elliptic representations for these cases transfers from  \cite{goldberg-class,herb93}.

We fix a $p$--adic field, $F,$ of characteristic zero, and  consider a reductive connected quasi-split algebraic group $\bG$ defined over $F.$  We assume $\bG'$ is an inner form of $\bG.$  The tempered spectrum of $G=\bG(F)$ consists of the discrete series, as well as  the irreducible components of representations parabolically induced from discrete series representations of proper $F$--Levi subgroups.  If $\bM'$ is an $F$--Levi subgroup of $\bG',$ then there is an $F$--Levi subgroup $\bM$ of $\bG$ with $\bM'$ an inner form of $\bM.$  Since the root datum of $\bG$ determines the Langlands $L$--group $^LG,$ we have an $\bar{F}$-isomorphism $\psi: \bG' \overset{\sim}{\rightarrow} \bG$ 
and we see $\hat
G=\hat G',$ where these are the connected components of the  $L$--groups.   The local Langlands correspondence gives a parameterization of the tempered spectrum, in the form of $L$--packets, via admissible homomorphisms $\varphi:W_F'\rightarrow\,^LG.$  This, then, gives a correspondence between the $L$--packets $\Pi_\varphi(G)$ of $G$ and $\Pi_\varphi(G')$ of $G'.$  Since $\bM$ and $\bM'$ are inner forms, we have a similar correspondence between $L$--packets on the Levi subgroups.

For the groups under consideration, $SO_n, Sp_{2n}$ and $SO_{2n}^*,$ the theory of induced representations is well understood (see \cite{goldberg-class} and Appendix A).  Furthermore, from the work of Harris-Taylor, \cite{ht01}, Henniart, \cite{he00} and Arthur, \cite{art12}, we know the local Langlands correspondence has been established for all these groups, as well as all of their Levi subgroups (see also \cite{js04, mw03} for $SO_{2n+1}$).  Here we investigate the induced representations for the non quasi-split inner forms $G'$ of these groups $G.$  Suppose $\sigma'$ is a discrete series representation of some Levi subgroup $M'=\bM'(F)$ of $G'.$ 
We suppose $\sigma'\in\Pi_\varphi (M'),$ and take an element $\sigma\in\Pi_\varphi (M).$  
We show there is a one to one correspondence between the components of the induced representations $i_{G,M}(\sigma)$ and $i_{G',M'}(\sigma')$ (cf. Theorem 3.6, and its corollaries).   The structure of $i_{G,M}(\sigma)$ and $i_{G',M'}(\sigma')$ are determined by the intertwining algebras $\mathcal C(\sigma)$ and $\mathcal C(\sigma').$  These in turn are determined by the Knapp-Stein $R$--groups $R_\sigma$ and $R_{\sigma'},$ along with two cocylces of each of these groups.  We show $R_\sigma\simeq R_{\sigma'}.$  Herb showed the cocycle of $R_\sigma$ is trivial, \cite{herb93}, and we show the argument there can be adapted to the inner form, showing the cocylce of $R_{\sigma'}$ is also trivial.  This, then gives the correspondence between components we referred to above.    Furthermore,  we show the components of $i_{G',M'}(\sigma')$ are elliptic if and only if the components of  $i_{G,M}(\sigma)$ are elliptic.

Our approach is through the theory of endoscopic and Arthur $R$--groups.  These are $R$--groups attached to the packet through the  $\hat G$--centralizer of the image of the parameter, $\varphi$ (see sections 2 and 3).  In order to establish our results we need to work under a pair of hypotheses (cf. Hypotheses 3.2).  Namely, we have to assume the stabilization of the twisted trace formula, and also assume the classification for the inner forms, which are expected to follow from the work of Arthur \cite{art12}, though this has not been completed as of now.   In the case of a split group  the endoscopic $R$--group $R_\varphi$ is the quotient of the Weyl group  $W_\varphi$ of the centralizer modulo that of the connected component $W_\varphi^\circ.$ The Arthur $R$-group is then defined by identifying $W_\varphi$ with a subgroup of the Weyl group $W_M.$  Then $R_{\varphi,\sigma}=W_{\varphi,\sigma}/W_{\varphi,\sigma}^\circ,$ where the two factors are determined by intersection with  the $W_M$--stabilizer of $\sigma.$  Then our main result is $R_\sigma\simeq R_{\varphi,\sigma}\simeq R_{\varphi,\sigma'}\simeq R_{\sigma'}$ (cf. Theorem 3.6).  From this all the results on reducibility and components now follow directly.

In Section 2 we give our  definitions and notation.  We give an explicit description of all the inner forms we will study, as well as preliminary results we need.  Section 3 then contains the main results on elliptic tempered $A$--packets, endoscopic, and Arthur $R$--groups. In Section 4, we apply the results of Section 3 to the study of elliptic representations.  Finally, Appendix A contains an extension of the results of \cite{goldberg-class} to the case of the non-split quasi-split even special orthogonal group.

\section{Preliminaries} \label{pre}
\subsection{Basic notation} \label{basic notation}
Let $F$ denote a $p$-adic field of characteristic $0,$ that is, 
a finite extension of $\QQ_p.$ Fix an algebraic closure $\bar{F}$ of $F.$ 
We let $\bG$ denote a connected reductive algebraic group defined over $F.$ 
We use the notation $G$ to denote the group $\bG(F)$ of $F$-points and likewise for other algebraic groups defined over $F.$

Fix a minimal $F$-parabolic subgroup $\bP_0$ of $G$ with Levi component $\bM_0$ and unipotent radical $\bN_0.$ Let $\bA_0$ be the split component of $\bM_0,$ that is, the maximal $F$-split torus in the center of $\bM_0.$ Let $\Delta$ be the set of simple roots of $\bA_0$ in $\bN_0.$ Let $\bP \subseteq \bG$ be a standard (that is, containing $\bP_0$) $F$-parabolic subgroup of $\bG.$ 
Write $\bP=\bM\bN$ with its Levi component $\bM=\bM_{\Theta} \supseteq \bM_0$ generated by a subset $\Theta \subseteq \Delta$ and its unipotent radical $\bN \subseteq \bN_0.$
We note that the split component $\bA_{\bM}=\bA_{\bM_{\Theta}}$ of $\bM=\bM_{\Theta}$ is equal to the identity component
$(\cap_{\alpha \in \Theta} \ker \alpha
)^{\circ}$ in $\bA_0,$
so that $\bM = Z_\bG(\bA_\bM),$ 
where $Z_\bG(\bA_\bM)$ is the centralizer of $\bA_\bM$ in $\bG$ (see \cite[Proposition 20.4]{borel91} and \cite[Section 15.1]{springer98}).
We denote by $\delta_P$ the modulus character of $P.$ 

Let $\Irr(M)$ denote the set of isomorphism classes of irreducible admissible representations of $M.$ Denote by $\Pi_{\disc}(M)$ the set of discrete series representations of $M.$ By abuse of terminology, we do not make distinction between each isomorphism class and its representative. For any $\sigma \in \Irr(M),$ we write $\ii_{G,M} (\sigma)$ for the normalized (twisted by $\delta_{P}^{1/2}$) induced representation. We denote by $\tsigma$ the contragredient of $\sigma.$

We let $X^{*}(\bM)_F$ denote the group of $F$-rational characters of $\bM.$ We set
$\ma_M := \Hom(X^{*}(\bM)_F ,\RR)$
the real Lie algebra of $\bA_{\bM}.$ 
We denote by $\Phi(P, A_M)$ the set of reduced roots of $\bP$ with respect to $\bA_\bM.$ 
Let $W_M = W(\bG, \bA_\bM) := N_\bG(\bA_\bM) / Z_\bG(\bA_\bM)$ denote the Weyl group of $\bA_\bM$ in $\bG,$ 
where $N_\bG(\bA_\bM)$ is the normalizer of $\bA_\bM$ in $\bG.$ 
For simplicity, we write $\bA_0 = \bA_{\bM_0}.$ 

We denote by $W_F$ the Weil group of $F$ and $\Gamma := \Gal(\bar{F} / F).$ Fixing $\Gamma$-invariant splitting data, we define $L$-group $^{L}G$ of $\bG$ as a semi-direct product $^{L}G := \widehat{G} \rtimes \Gamma$ (see \cite[Section 2]{bo79}).
For any topological group $H,$ we denote by $Z(H)$ the center of $H.$ We let $\pi_0(H)$ denote the group $H/H^\circ$ of connected components of $H,$ where $H^\circ$ is the identity component of $H.$
\subsection{Structure of $F$-inner forms of quasi-split classical groups} \label{structure}
Let $\bG=\bG(n)$ denote a quasi-split classical group of rank $n$ over $F.$ 
More precisely, they are the following cases.
\begin{enumerate}
\item Type $\bold{B_n}:$ $\bG(n)=SO_{2n+1},$ the split special orthogonal group in $2n+1$ variables defined over $F.$
\item Type $\bold{C_n}:$ $\bG(n)=Sp_{2n},$ the symplectic group in $2n$ variables defined over $F.$
\item Type $\bold{{^1}D_n}:$ $\bG(n)=SO_{2n},$ the split special orthogonal group in $2n$ variables defined over $F.$ 
\item Type $\bold{{^2}D_n}:$ $\bG(n)=SO^*_{2n},$ the quasi-split special orthogonal group associated to a quadratic extension $E$ over $F,$ where $\bG(n)$ is split over $E.$
\end{enumerate}
We denote by $\bM$ an $F$--Levi subgroup of $\bG.$ Then, $M$ is of the form
\begin{equation} \label{form of M}
GL_{n_1}(F) \times \cdots \times GL_{n_k}(F) \times G_-(m),
\end{equation}
where $\sum_{i=1}^k n_i + m = n$ and $\bG_-(m)$ denotes $SO_{2m+1}, Sp_{2m}, SO_{2m}$ or $SO^*_{2m}$ with the same type as $\bG.$ 
We refer the reader to \cite{goldberg-class} for split cases and Appendix \ref{R-gps for SO*} for the quasi-split case $SO^*_{2n.}$

We let $\bG'=\bG'(n)$ denote an $F$--inner form of $\bG.$
Recall that $\bG$ and $\bG'$ are \textit{$F$-inner forms} with respect to an $\bar{F}$-isomorphism $\varphi: \bG' \overset{\sim}{\rightarrow} \bG$ 
if $\varphi \circ \tau(\varphi)^{-1}$ is an inner automorphism ($g \mapsto xgx^{-1}$) defined over $\bar{F}$ for all $\tau \in \Gamma.$ 
We often omit $\varphi$ with no danger of confusion.
Note that $\bG'$ can be $\bG$ itself by taking $\varphi$ to be the identity map.
We denote by $\bM'$ an $F$--Levi subgroup of $\bG'$ such that $\bM'$ is an $F$--inner form of $\bM.$

For the rest of the section, we discuss the structure of $M',$ which turns out to be of the form
\begin{equation} \label{form of M'}
GL_{m_1}(D) \times \cdots \times GL_{m_k}(D) \times G'_-(m).
\end{equation}
Here $D$ denotes a central division algebra of dimension $1$ (hence, $D=F$) or $4$ over $F,$ and $\bG'_-(m)$ denotes an $F$-inner form of $SO_{2m+1}, Sp_{2m}, SO_{2m}$ or $SO^*_{2m}$ with the same type as $\bG'.$ 
Further, $ \sum_{i=1}^k d ~ m_i + m = n,$ where $d=1$ or $2$ according as the dimension of $D$ which is $1$ or $4.$ Note that $d ~ m_i = n_i.$

In what follows, based on the Satake classification \cite[pp. 119-120]{sa71}, we describe every $F$-inner form $\bG'$ and every possible maximal $F$-Levi subgroup $\bM'$ of $\bG'.$
In the diagram below (Satake diagram), a black vertex indicates a root in the set of simple roots of a fixed minimal $F$-Levi subgroup $\bM'_0$ of $\bG'$. 
So, we remove only a subset $\vartheta$ of white vertices to obtain an $F$-Levi subgroup $\bM'$ (see \cite[Section 2.2]{sa71} and \cite[Section I.3]{bo79}). 
As discussed in Section \ref{basic notation}, the $F$-Levi subgroup $\bM',$ corresponding to $\Theta = \Delta \setminus \vartheta,$  
is the centralizer in $\bG'$ of the split component $\bA_{\bM'}=(\cap_{\alpha \in \Theta} \ker \alpha)^{\circ}.$ 
One thus notices that there is an $F$--isomorphism between two split components
\begin{equation}  \label{iso between A's}
\bA_{\bM} \s \bA_{\bM'} \s (GL_1)^k.
\end{equation}
We write $D_2$ and $D_4$ for a division algebra of dimension $4$ and $16$ over $F,$ respectively. 
\bigskip

\noindent \textbf{$\bold{B_n}$-type :} There is only one (up to isomorphism) non-split $F$-inner form $\bG'=\bG'(n)$ of $SO_{2n+1}$ with the following diagram.
\begin{itemize}
\item[]

\[
\xy 
\POS (10,0) *\cir<2pt>{} ="a" ,
\POS (20,0) *\cir<2pt>{} ="b" ,
\POS (30,0) *\cir<0pt>{} ="c" ,
\POS (40,0) *\cir<0pt>{} ="d" ,
\POS (50,0) *\cir<2pt>{} ="e" ,
\POS (60,0) *\cir<2pt>{} ="f",
\POS (70,0) *{\bullet} ="g"

\POS "a" \ar@{-}^<<<{}_<<{}  "b",
\POS "b" \ar@{-}^<<<{}_<<{}  "c",
\POS "c" \ar@{.}^<<<{}_<<{}  "d",
\POS "d" \ar@{-}^<<<{}_<<{}  "e",
\POS "e" \ar@{-}^<<<{}_<<{}  "f",
\POS "f" \ar@{=>}^<<<{}_<<{} "g",
\endxy
\]
Set $\Theta = \Delta \setminus  \alpha_{i} $, where $i=1, 2, \dots, n-1$ and $\alpha_{i} = e_{i}-e_{i+1}.$ Then $M'=M'_{\Theta}$ is of the form $GL_{i}(F) \times G'(n-i).$ For the case $i=n-1,$ $G'(n-i)=G'(1)=PSL_1(D_2).$  

\end{itemize}

\bigskip

\noindent \textbf{$\bold{C_n}$-type :} According to the parity of $n,$ there is only one (up to isomorphism) non-split $F$-inner form $\bG'=\bG'(n)$ of $Sp_{2n}$ with the following diagram.

\begin{itemize}
\item $\mathbf{n}$\textbf{: odd}
\[
\xy 
\POS (10,0) *{\bullet} ="a" ,
\POS (20,0) *\cir<2pt>{} ="b" ,
\POS (30,0) *{\bullet} ="c" ,
\POS (40,0) *\cir<2pt>{} ="d" ,
\POS (50,0) *\cir<0pt>{} ="e" ,
\POS (60,0) *\cir<0pt>{} ="f",
\POS (70,0) *{\bullet} ="g",
\POS (80,0) *\cir<2pt>{} ="h",
\POS (90,0) *{\bullet} ="i",

\POS "a" \ar@{-}^<<<{}_<<{}  "b",
\POS "b" \ar@{-}^<<<{}_<<{}  "c",
\POS "c" \ar@{-}^<<<{}_<<{}  "d",
\POS "d" \ar@{-}^<<<{}_<<{}  "e",
\POS "e" \ar@{.}^<<<{}_<<{}  "f",
\POS "f" \ar@{-}^<{}_<<<{} "g",
\POS "g" \ar@{-}^<{}_<<<{} "h",
\POS "h" \ar@{<=}^<<<<{} "i",
\endxy
\]
\begin{center} 
(every other dot black)
\end{center}
Set $\Theta = \Delta \setminus \alpha_{i}$, where $i=2, 4, \dots, n-1$ and $\alpha_{i}=e_{i}-e_{i+1}.$ Then $M'=M'_{\Theta}$ is of the form $GL_{i/2}(D_2) \times G'(n-i).$ For the case $i=n-1,$ $G'(n-i)=G'(1)=SL_1(D_2).$

\item $\mathbf{n}$\textbf{: even}

\[
\xy 
\POS (10,0) *{\bullet} ="a" ,
\POS (20,0) *\cir<2pt>{} ="b" ,
\POS (30,0) *{\bullet} ="c" ,
\POS (40,0) *\cir<2pt>{} ="d" ,
\POS (50,0) *\cir<0pt>{} ="e",
\POS (60,0) *\cir<0pt>{} ="f",
\POS (70,0) *{\bullet} ="g",
\POS (80,0) *\cir<2pt>{} ="h",
\POS (90,0) *{\bullet} ="i",
\POS (100,0) *\cir<2pt>{} ="j",

\POS "a" \ar@{-}^<<<{}_<<{}  "b",
\POS "b" \ar@{-}^<<<{}_<<{}  "c",
\POS "c" \ar@{-}^<<<{}_<<{}  "d",
\POS "d" \ar@{-}^<<<{}_<<{}  "e",
\POS "e" \ar@{.}^<{}_<<<{} "f",
\POS "f" \ar@{-}^<{}_<<<{} "g",
\POS "g" \ar@{-}^<{}_<<<{} "h",
\POS "h" \ar@{-}^<{}_<<<{} "i",
\POS "i" \ar@{<=}^>>>>{} "j",

\endxy
\]
\begin{center} 
(every other dot black)
\end{center}
Set $\Theta = \Delta \setminus \alpha_i $, where $i=2, 4, \dots, n-2, n$ and $\alpha_{i}= e_{i}-e_{i+1}$ for $i \neq n;$ $\alpha_{n}=2e_{n}$ for $i = n.$ 
Then $M'=M'_{\Theta}$ is of the form $GL_{i/2}(D_2) \times SU^{+}_{n-i}(D_2).$ Here, $SU^{+}_{2k}(D_2)$ is defined as
\[
SU^{+}_{2k}(D_2) := \{ g \in GL_{2k}(D_2) :  g^{*} J^+ g = J^+ \},
\]
where $g^{*} = (g_{ij})^{*} = ( \bar{g}_{ji})$  with the usual involution $\bar{g}$ on $D$,  $J^+ = \bigl(\begin{smallmatrix} 0 & I_k \\  I_k & 0 \end{smallmatrix} \bigr),$ 
and $SU^{+}_{0}(D_2)=G'(0)=1$ (see \cite[Section 2.3.3]{pr94}). For the case $i=n,$ $M'=M'_{\Theta}$ is of the form $GL_{n/2}(D_2) .$ We remark that, for even $n$, the $F$-inner form $G'=G'(n)$ of $Sp_{2n}$  is of the form $SU^{+}_{n}(D_2) .$ 
\end{itemize}

\bigskip

\noindent \textbf{$\bold{{^1}D_n}$-type :} For each $n,$ there are two (up to isomorphism) non-split $F$-inner forms $\bG'=\bG'(n)$ of  a split group $SO_{2n}$ with the following diagrams.

\begin{itemize}
\item $\mathbf{n}$\textbf{: any}

\[
\xy
\POS (10,0) *\cir<2pt>{} ="a" ,
\POS (20,0) *\cir<2pt>{} ="b" ,
\POS (30,0) *\cir<0pt>{} ="c" ,
\POS (40,0) *\cir<0pt>{} ="d" ,
\POS (50,0) *\cir<2pt>{} ="e",
\POS (60,0) *\cir<2pt>{} ="f",
\POS (70,5) *{\bullet} ="g",
\POS (70,-5) *{\bullet} ="h",

\POS "a" \ar@{-}^<<<{}_<<{}  "b",
\POS "b" \ar@{-}^<<<{}_<<{}  "c",
\POS "c" \ar@{.}^<<<{}_<<{}  "d",
\POS "d" \ar@{-}^<<<{}_<<{}  "e",
\POS "e" \ar@{-}^<<<{}_<<{} "f",
\POS "f" \ar@{-}^<<{}_<<<{} "g",
\POS "f" \ar@{-}^<<<{}_<<{} "h",
\endxy
\]
Set $\Theta = \Delta \setminus \alpha_{i} $, where $i=1, 2, \dots, n-2$ and $\alpha_{i}=e_{i}-e_{i+1}.$ Then $M'=M'_{\Theta}$ is of the form $GL_{i}(F) \times G'(n-i).$ For the case $i=n-2,$ $G'(n-i)=G'(2),$ and we have an exact sequence of groups
\[
1 \rightarrow \ZZ/2\ZZ  \rightarrow 
SL_1(D_2)  \times SL_1(D_2)  \rightarrow G'(2) \rightarrow 1.
\]
Further, for the case $i=n-3,$ we have $G'(n-i)=G'(3),$ with the following exact sequence
\[
1 \rightarrow \ZZ/2\ZZ  \rightarrow 
SL_2(D_2)  \rightarrow G'(3) \rightarrow 1.
\]

\item $\mathbf{n}$\textbf{: odd}

\[
\xy
\POS (10,0) *{\bullet} ="a" ,
\POS (20,0) *\cir<2pt>{} ="b" ,
\POS (30,0) *{\bullet} ="c" ,
\POS (40,0) *\cir<2pt>{} ="d" ,
\POS (50,0) *\cir<0pt>{} ="e",
\POS (60,0) *\cir<0pt>{} ="f",
\POS (70,0) *{\bullet} ="g",
\POS (80,0) *\cir<2pt>{} ="h",
\POS (90,0) *{\bullet} ="i",
\POS (100,5) *{\bullet} ="j",
\POS (100,-5) *{\bullet} ="k",

\POS "a" \ar@{-}^<<<{}_<<{}  "b",
\POS "b" \ar@{-}^<<<{}_<<{}  "c",
\POS "c" \ar@{-}^<<<{}_<<{}  "d",
\POS "d" \ar@{-}^<<<{}_<<{}  "e",
\POS "e" \ar@{.}^<<<{}_<<{}  "f",
\POS "f" \ar@{-}^<<<{}_<<{} "g",
\POS "g" \ar@{-}^<<<{}_<<{} "h",
\POS "h" \ar@{-}^<{}_<<<{} "i",
\POS "i" \ar@{-}^<<<{}_<<{} "j",
\POS "i" \ar@{-}^<<<{}_<<{} "k",
\endxy
\]
Set $\Theta = \Delta \setminus  \alpha_{i} $, where $i=2, 4, \dots, n-3$ and $\alpha_{i}=e_{i}-e_{i+1}.$ Then $M'=M'_{\Theta}$ is of the form $GL_{i/2}(D_2)  \times G'(n-i).$ For the case $i=n-3,$ $G'(n-i)=G'(3)=PSL_1(D_4).$ 

\item $\mathbf{n}$\textbf{: even}

\[
\xy
\POS (10,0) *{\bullet} ="a" ,
\POS (20,0) *\cir<2pt>{} ="b" ,
\POS (30,0) *{\bullet} ="c" ,
\POS (40,0) *\cir<2pt>{} ="d" ,
\POS (50,0) *\cir<0pt>{} ="e",
\POS (60,0) *\cir<0pt>{} ="f",
\POS (70,0) *{\bullet} ="g",
\POS (80,0) *\cir<2pt>{} ="h",
\POS (90,5) *{\bullet} ="i",
\POS (90,-5) *\cir<2pt>{} ="j",

\POS "a" \ar@{-}^<<<{}_<<{}  "b",
\POS "b" \ar@{-}^<<<{}_<<{}  "c",
\POS "c" \ar@{-}^<<<{}_<<{}  "d",
\POS "d" \ar@{-}^<<<{}_<<{}  "e",
\POS "e" \ar@{.}^<<<{}_<<{}  "f",
\POS "f" \ar@{-}^<{}_<<<{} "g",
\POS "g" \ar@{-}^<{}_<<<{} "h",
\POS "h" \ar@{-}^<<{}_<<<{} "i",
\POS "h" \ar@{-}^>{}_>>{} "j",
\endxy
\]
Set $\Theta = \Delta \setminus  \alpha_{i} $, where $i=2, 4, \dots, n-2, n$ and $\alpha_{i}= e_{i}-e_{i+1}$ for $i \neq n;$ $\alpha_{n}=e_{n-1} + e_{n}$ for $i=n.$ Then $M'=M'_{\Theta}$ is of the form $GL_{i/2}(D_2)  \times SU^{-}_{n-i}(D_2) .$ Here, $SU^{-}_{2k}(D_2) $ is defined as
\[
SU^{-}_{2k}(D_2)  := \{ g \in GL_{2k}(D_2)  :  g^{*} J^- g = J^- \},
\]
where $g^{*} = (g_{ij})^{*} = ( \bar{g}_{ji})$  with the usual involution $\bar{g}$ on $D$,  $J^- = \bigl(\begin{smallmatrix} 0 & I_k \\  -I_k & 0 \end{smallmatrix} \bigr),$ and $SU^{-}_{0}(D_2) =G(0)=1$ (see \cite[Section 2.3.3]{pr94}). We remark that, for even $n$, the $F$-inner form $G'=G'(n)$ of $SO_{2n}$ is of the form $SU^{-}_{n}(D_2) .$ For the case $i=n,$ $M'=M'_{\Theta}$ is of the form $GL_{n/2}(D_2) .$ Further, for the case $i=n-2,$ we have $M'_{\Theta} \simeq GL_{(n-2)/2}(D_2)  \times SU^{-}_{2}(D_2) $ and we have an exact sequence of groups
\[
1 \rightarrow \ZZ/2\ZZ \rightarrow SL_1(D_2)  \times SL_2 \rightarrow SU^{-}_{2}(D_2)  \rightarrow 1.
\]
\end{itemize}
\bigskip

\noindent \textbf{$\bold{{^2}D_n}$-type :} According to the parity of $n,$ there is only one (up to isomorphism) non-split $F$-inner form $\bG'=\bG'(n)$ of a quasi-split group $SO^*_{2n}$ over a quadratic extension $E/F$ with the following diagram.

\begin{itemize}
\item $\mathbf{n}$\textbf{: odd}

\[
\xy
\POS (10,0) *{\bullet} ="a" ,
\POS (20,0) *\cir<2pt>{} ="b" ,
\POS (30,0) *{\bullet} ="c" ,
\POS (40,0) *\cir<2pt>{} ="d" ,
\POS (50,0) *\cir<0pt>{} ="e",
\POS (60,0) *\cir<0pt>{} ="f",
\POS (70,0) *{\bullet} ="g",
\POS (80,0) *\cir<2pt>{} ="h",
\POS (90,0) *{\bullet} ="i",
\POS (100,5) *\cir<2pt>{} ="j",
\POS (100,-5) *\cir<2pt>{} ="k",

\POS "a" \ar@{-}^<<<{}_<<{}  "b",
\POS "b" \ar@{-}^<<<{}_<<{}  "c",
\POS "c" \ar@{-}^<<<{}_<<{}  "d",
\POS "d" \ar@{-}^<<<{}_<<{}  "e",
\POS "e" \ar@{.}^<<<{}_<<{}  "f",
\POS "f" \ar@{-}^<<<{}_<<{} "g",
\POS "g" \ar@{-}^<<<{}_<<{} "h",
\POS "h" \ar@{-}^<{}_<<<{} "i",
\POS "i" \ar@{-}^<<<{}_<<{} "j",
\POS "i" \ar@{-}^<<<{}_<<{} "k",
\POS "j" \ar@/{\;}^1pc/ "k",
\POS "k" \ar@/_1pc/ "j",
\endxy
\]
Set $\Theta = \Delta \setminus \alpha_{i} $, where $i=2, 4, \dots, n-1, n$ and $\alpha_{i}=e_{i}-e_{i+1}$ for $i \neq n;$ $\alpha_{n}=e_{n-1}+e_{n}$ for $i=n.$ When $i=2, 4, \dots, n-3,$ $M'=M'_{\Theta}$ is of the form $GL_{i/2}(D_2)  \times G'(n-i).$  When $i=n-1,$ $M'=M'_{\Theta}$ is of the form $GL_{i/2}(D_2)  \times E^{\times}.$ When $i=n,$ $M'=M'_{\Theta}$ is of the form $GL_{(n-1)/2}(D_2) .$

\item $\mathbf{n}$\textbf{: even}

\[
\xy
\POS (10,0) *{\bullet} ="a" ,
\POS (20,0) *\cir<2pt>{} ="b" ,
\POS (30,0) *{\bullet} ="c" ,
\POS (40,0) *\cir<2pt>{} ="d" ,
\POS (50,0) *\cir<0pt>{} ="e",
\POS (60,0) *\cir<0pt>{} ="f",
\POS (70,0) *{\bullet} ="g",
\POS (80,0) *\cir<2pt>{} ="h",
\POS (90,5) *{\bullet} ="i",
\POS (90,-5) *{\bullet} ="j",

\POS "a" \ar@{-}^<<<{}_<<{}  "b",
\POS "b" \ar@{-}^<<<{}_<<{}  "c",
\POS "c" \ar@{-}^<<<{}_<<{}  "d",
\POS "d" \ar@{-}^<<<{}_<<{}  "e",
\POS "e" \ar@{.}^<<<{}_<<{}  "f",
\POS "f" \ar@{-}^<{}_<<<{} "g",
\POS "g" \ar@{-}^<{}_<<<{} "h",
\POS "h" \ar@{-}^<<{}_<<<{} "i",
\POS "h" \ar@{-}^>{}_>>{} "j",

\POS "i" \ar@/{\;}^1pc/ "j",
\POS "j" \ar@/_1pc/ "i",

\endxy
\]
Set $\Theta = \Delta \setminus  \alpha_{i} $, where $i=2, 4, \dots, n-2$ and $\alpha_{i}= e_{i}-e_{i+1}.$  Then $M'=M'_{\Theta}$ is of the form $GL_{i/2}(D_2)  \times G'(n-i).$
\end{itemize}
\subsection{$R$-groups} \label{section for def of R}
In this section, we recall the definitions of Knapp-Stein, Arthur and endoscopic $R$--groups.
For $\sigma \in \Irr(M)$ and $w\in W_M,$ we let ${^w}\sigma$ be the representation given by ${^w}\sigma(x)=\sigma(w^{-1}xw).$ (Note, for the purpose of computing $R$--groups, we need not worry about the representative we choose for $w.$)
Given $\sigma \in \Pi_{\disc}(M),$ we define
\[ W(\sigma) := \{ w \in W_M : {^w}\sigma \s \sigma \}.
\]
Set $\Delta'_\sigma = \{ \alpha \in \Phi(P, A_M) : \mu_{\alpha} (\sigma) = 0 \},$ where $\mu_{\alpha} (\sigma)$ is the rank one Plancherel measure for $\sigma$ attached to $\alpha$ \cite[p.1108]{goldberg-class}. 
The \textit{Knapp-Stein $R$-group} is defined by
\[
R_{\sigma} := \{ w \in W(\sigma) : w \alpha > 0, \; \forall \alpha \in \Delta'_\sigma \}.
\]
Denote by $W'_{\sigma}$ the subgroup of $W(\sigma),$ generated by the reflections in the roots of $\Delta'_\sigma.$
We write $C(\sigma):={\End}_{G}(\ii_{G,M} (\sigma))$ for the algebra of $G$-endomorphisms of $\ii_{G,M} (\sigma),$ known as the commuting algebra of $\ii_{G,M} (\sigma).$

\begin{thm}[Knapp-Stein \cite{ks72}; Silberger \cite{sil78, sil78cor}]  \label{thm for Knapp-Stein-Sil}
For any $\sigma \in \Pi_{\disc}(M),$ we have
\[
W(\sigma) = R(\sigma) \ltimes W'_{\sigma}. 
\]
Moreover, $C(\sigma) \s \CC[R(\sigma)]_{\eta},$ the group algebra of $R(\sigma)$ twisted by a $2$-cocycle $\eta,$ which is explicitly defined in terms of group $W(\sigma).$
\end{thm}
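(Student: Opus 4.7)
The plan is to work with the family of normalized self-intertwining operators indexed by $W(\sigma)$ and extract both the group-theoretic decomposition and the algebra structure from their behavior. For each $w \in W(\sigma)$, fix a unitary isomorphism $T_w: {}^w\sigma \overset{\sim}{\to} \sigma$ and, via Harish-Chandra's theory, define a normalized intertwining operator $A(\sigma,w): \ii_{G,M}(\sigma) \to \ii_{G,M}(\sigma)$, built from the standard rank-one operators composed along a reduced expression for $w$ in $W_M$. The first task is to show $\{A(\sigma,w) : w \in W(\sigma)\}$ spans $C(\sigma)$; this is Harish-Chandra's dimension equality, proved by computing the character of $\ii_{G,M}(\sigma)$ on the elliptic set and comparing with the number of $W(\sigma)$-orbits on a suitable torus, so that intertwining operators exhaust the commuting algebra.

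Next I would analyze the role of $\Delta'_\sigma$. The key analytic input is that for each reduced root $\alpha$, the rank-one operator factors as a scalar multiple of an operator whose residue/pole structure is controlled by $\mu_\alpha(\sigma)$. When $\mu_\alpha(\sigma) = 0$, the induced representation on the Levi $M_\alpha$ generated by $\alpha$ is reducible, the reflection $s_\alpha$ lies in $W(\sigma)$, and the normalized $A(\sigma, s_\alpha)$ is a scalar operator. Thus $W'_\sigma \subseteq W(\sigma)$, and the operators $A(\sigma,w)$ for $w \in W'_\sigma$ contribute only scalars to $C(\sigma)$. Since $W'_\sigma$ is generated by reflections, it is a Coxeter (hence normal) subgroup of $W(\sigma)$ with chamber structure determined by $\Delta'_\sigma$.

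For the semidirect product, I would invoke the standard fact about reflection subgroups: the elements of $W(\sigma)$ mapping every $\alpha \in \Delta'_\sigma$ to a positive root form a set of coset representatives for $W(\sigma)/W'_\sigma$, since each coset contains exactly one element sending the positive chamber of $W'_\sigma$ into itself. By definition this set is $R(\sigma)$, which is automatically a subgroup (uniqueness of the representative makes multiplication preserve positivity on $\Delta'_\sigma$), giving $W(\sigma) = R(\sigma) \ltimes W'_\sigma$. Combined with the previous paragraph, this yields a basis $\{A(\sigma,r) : r \in R(\sigma)\}$ of $C(\sigma)$, and composing operators gives $A(\sigma,r_1)A(\sigma,r_2) = \eta(r_1,r_2)\, A(\sigma,r_1r_2)$ for a $2$-cocycle $\eta \in Z^2(R(\sigma),\CC^\times)$, arising from the ambiguity in choosing the isomorphisms $T_w$ along composition, and expressible explicitly in terms of the normalizing factors attached to elements of $W(\sigma)$.

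The main obstacle is the analytic claim that normalized operators attached to $s_\alpha$ with $\mu_\alpha(\sigma) = 0$ are scalar, rather than just intertwining; equivalently, that the vanishing of the Plancherel measure exactly detects the scalar locus and there are no spurious zeros or poles after normalization. This is precisely the point where Silberger's correction to Knapp--Stein is needed, and where care with the choice of normalizing factors (meromorphic continuation of standard operators, Arthur's normalization, or the $c$-function formalism) is required; once this is in place, the group-theoretic argument above runs smoothly and the cocycle identity and its expression in terms of $W(\sigma)$ fall out of tracking the $T_w$'s through reduced decompositions.
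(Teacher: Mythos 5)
You should first be aware that the paper contains no proof of this statement: it is quoted as the classical Knapp--Stein/Silberger theorem, with \cite{ks72}, \cite{sil78}, \cite{sil78cor} standing in for the argument, so there is no in-paper proof to compare yours against; what can be judged is whether your sketch correctly reconstructs the classical argument. In outline it does --- normalized self-intertwining operators $A(\sigma,w)$, Harish-Chandra's commuting algebra theorem (spanning), a rank-one analysis of $\mu_\alpha$, the chamber-geometry decomposition $W(\sigma)=R_\sigma\ltimes W'_\sigma$, and a $2$-cocycle from composing operators --- but your key rank-one assertion is stated backwards and, as written, is self-contradictory. You claim that $\mu_\alpha(\sigma)=0$ makes the induced representation on the Levi $M_\alpha$ \emph{reducible} and simultaneously makes $A(\sigma,s_\alpha)$ scalar. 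For a discrete series $\sigma$ with $s_\alpha\sigma\s\sigma$, Silberger's theorem says the opposite: $\mu_\alpha(\sigma)=0$ if and only if $\ii_{M_\alpha,M}(\sigma)$ is \emph{irreducible}, and it is precisely this irreducibility (Schur's lemma) that forces the normalized operator attached to $s_\alpha$ to be a scalar. If the rank-one induced representation were reducible with $s_\alpha\in W(\sigma)$, its commuting algebra would be two-dimensional, spanned by the identity and $A(\sigma,s_\alpha)$, so that operator could not be scalar; the roots with $s_\alpha\in W(\sigma)$ and $\mu_\alpha(\sigma)\neq 0$ are exactly the ones producing reducibility, which is the whole reason one passes to the quotient $R_\sigma=W(\sigma)/W'_\sigma$ (as a set of chamber-preserving representatives).

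Two further points are treated as automatic but are not. First, the semidirect product requires that $W(\sigma)$ permute $\Delta'_\sigma$ (equivalently, the root subsystem it generates); this follows from the invariance $\mu_{w\alpha}(\sigma)=\mu_{\alpha}({}^{w^{-1}}\sigma)=\mu_\alpha(\sigma)$ for $w\in W(\sigma)$, and without it the ``unique positivity-preserving coset representative'' argument does not get started. Second, spanning of $C(\sigma)$ by $\{A(\sigma,w)\}_{w\in W(\sigma)}$ together with scalarity of $A(\sigma,w')$ for $w'\in W'_\sigma$ only shows that $\{A(\sigma,r): r\in R_\sigma\}$ spans $C(\sigma)$; the isomorphism $C(\sigma)\s\CC[R_\sigma]_\eta$ also needs their linear independence, i.e. $\dim C(\sigma)=[W(\sigma):W'_\sigma]$, which is the actual content of Silberger's Knapp--Stein dimension theorem and does not simply fall out of the cocycle bookkeeping. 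Neither of these is fatal to the strategy --- both are in the cited literature --- but your writeup asserts the basis claim without the dimension count and inverts the rank-one dichotomy, which is the step a reader would flag.
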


Let $\phi : W_F \times SL_2(\CC) \rightarrow \widehat{M} \hookrightarrow \widehat{G}$ be an $L$-parameter.
We denote by $C_{\phi}(\widehat{G})$ the centralizer of the image of $\phi$ in $\widehat{G}$ and by $C_{\phi}(\widehat{G})^{\circ}$ its identity component. Fix a maximal torus $T_{\phi}$ in $C_{\phi}(\widehat{G})^{\circ}.$ 
\begin{rem} \label{remark for T}
From \cite[Chapter 2.4]{art12} we set a maximal torus $T_{\phi}$ in $C_{\phi}(\widehat{M})^{\circ}$ to be the identity component
\[
A_{\widehat M} = (Z(\widehat{M})^{\Gamma})^{\circ}
\]
of the $\Gamma$--invariants of the center $Z(\widehat{M}).$ 
\end{rem}
We set
\[
W_{\phi}^{\circ} := N_{C_{\phi}(\widehat{G})^{\circ}} (T_{\phi}) /  Z_{C_{\phi}(\widehat{G})^{\circ}} (T_{\phi}), \quad W_{\phi} := N_{C_{\phi}(\widehat{G})} (T_{\phi}) /  Z_{C_{\phi}(\widehat{G})} (T_{\phi}).
\]
The \textit{endoscopic $R$-group} $R_{\phi}$ is defined as follows
\[
R_{\phi}:=W_{\phi}/W_{\phi}^{\circ}.
\]
Note that $W_{\phi}$ can be identified with a subgroup of $W_M$ (see \cite[p.45]{art89ast}). Let $\Pi_{\phi}(M) \subset \Irr(M)$ be the $L$-packet associated to the $L$-parameter $\phi.$ For $\sigma \in \Pi_{\phi}(M),$ we set
\begin{equation} \label{def of W_phi, sigma}
W_{\phi, \sigma}^{\circ} :=  W_{\phi}^{\circ} \cap W(\sigma),~  W_{\phi, \sigma} :=  W_{\phi} \cap W(\sigma) ~ \text{and}~ R_{\phi, \sigma}:=W_{\phi, \sigma}/W_{\phi, \sigma}^{\circ}.
\end{equation}
We call $R_{\phi, \sigma}$ the \textit{Arthur $R$-group}.
\section{Transfer of $R$--groups} \label{transfer of R}
We continue with the notation in Section \ref{pre}. Let $\bG=\bG(n)$ denote a quasi-split classical group, $SO_{2n+1},$ $Sp_{2n},$ $SO_{2n},$ or $SO_{2n}^*,$ of rank $n$ over $F,$ and let $\bG'=\bG'(n)$ denote an $F$--inner form of $\bG$ ($\bG'$ can be $\bG$ itself).
In this section, we describe Weyl group actions for $\bG'$. 
Further, we prove the three $R$--groups for $\bG'$, Knapp-Stein, Arthur, and endoscopic, are identical if they are associated to the same elliptic tempered $A$--parameter for an $F$-Levi subgroup of $\bG'.$ 
For these cases, all three $R$--groups are thus invariant on $A$--packets and preserved by inner forms. 
As a corollary, we identically transfer all the description of Knapp-Stein $R$--groups for the quasi-split $\bG$ (see \cite{herb93, goldberg-class} and Appendix \ref{R-gps for SO*}) to its inner form $\bG'.$

Following Arthur's endoscopic classification for $G$ and $G'$ in \cite{art12}, we take an $L$-group
\[
{^L}{G}={^L}{G'}=\widehat{G} \rtimes \Gal(E/F),
\]
where $E=F$ when $\bG$ is split, or $E$ a quadratic extension $E$ over $F$ when $\bG=SO^*_{2n}.$ To be precise, we have following $L$--groups.
\begin{enumerate}
\item Type $\bold{B_n}:$ $\widehat{G}=\widehat{G'}=Sp_{2n}(\CC)={^L}{G}={^L}{G'}.$
\item Type $\bold{C_n}:$ $\widehat{G}=\widehat{G'}=SO_{2n+1}(\CC)={^L}{G}={^L}{G'}.$
\item Type $\bold{{^1}D_n}:$ $\widehat{G}=\widehat{G'}=SO_{2n}(\CC)={^L}{G}={^L}{G'}.$ 
\item Type $\bold{{^2}D_n}:$ $\widehat{G}=\widehat{G'}=SO_{2n}(\CC),$ and ${^L}{G}={^L}{G'} = SO_{2n}(\CC) \rtimes \Gal(E/F).$ 
Note that $\Gal(E/F) \s O_{2n}(\CC)/SO_{2n}(\CC),$ 
where $O_{2n}(\CC)$ denotes the even orthogonal group of size $2n.$ 
Further, the nontrivial outer automorphism exchanging the roots $\alpha_n$ and $\alpha_{n-1}$ is induced by conjugation by an element in $O_{2n}(\CC) \setminus  SO_{2n}(\CC),$ cf. \cite[Section 7]{cpssh11}.
\end{enumerate}
Let $\bM$ and $\bM'$ be $F$--Levi subgroups of $\bG$ and $\bG',$ respectively, such that $\bM'$ is an $F$--inner form of $\bM.$
We identify
\begin{equation*}  \label{iso between L-groups}
\widehat M = \widehat{M'}, \quad {^L}{M}={^L}{M'}.
\end{equation*}
\subsection{Elliptic tempered $A$-packets} \label{elliptic tempered a-packets}
In this section, we describe elliptic tempered $A$-packets for $M$ and $M'.$ We follow Arthur's local results in \cite[Chapters 1 and 9]{art12}, cf. \cite{js04, mw03}.
Let $\phi : W_F \times SL_2(\CC) \rightarrow {^L}{M}$ be an elliptic tempered $L$-parameter \cite[Section 8.2]{bo79}. 
We recall from \cite{art89ast, art12} that $\phi$ is \textit{elliptic} if the quotient group $C_{\phi}(\widehat{M}) / Z({\widehat{M}})$ is finite, 
and $\phi$ is \textit{tempered} if the image $\phi(W_F)$ is relatively compact (or bounded).
Considering \eqref{form of M} and \eqref{form of M'}, 
we note that $\phi$ is of the form $\phi_1 \oplus \phi_2 \oplus \cdots \oplus \phi_k \oplus \phi_-,$ 
where $\phi_i$ is an elliptic tempered $L$-parameter for $GL_{n_i}(F)$ as well as $GL_{m_i}(D),$ 
and $\phi_-$ is that for $G_-(m)$ as well as $G'_-(m).$
Due to \cite{he00}, \cite{ht01}, \cite{hs11}, and \cite[Theorems 1.5.1 and 9.4.1]{art12}, we construct $A$-packets $\Pi_{\phi}(M)$ and $\Pi_{\phi}(M')$ of $M$ and $M'$ associated to $\phi,$ which respectively consist of discrete series representations of $M$ and $M'.$ 
\begin{rem} \label{rem for A-packets}
We make the following remarks on the elliptic and tempered $A$-packets.
\begin{enumerate}
\item[1.] For our purpose, we deal with discrete series representations for $M$ and $M'.$ So, we do not need the extra $SU(2)$ factor for parameters, in which case $A$-parameters coincide with $L$-parameter except for  the $O_{2n}(\CC)$-- conjugation for even orthogonal cases (see 3. below). Arthur called the $A$-parameter which is trivial on the extra $SU(2)$ factor to be generic \cite[Chapter 1.4]{art12}.

\item[2.] For the case $\bG=SO_{2n+1},$ $Sp_{2n},$ and their inner forms $\bG',$ the elliptic and tempered $A$-packets consist of irreducible discrete series representations, 
so they coincide with elliptic and tempered $L$--packets. 

\item[3.] For the case $\bG=SO_{2n},$ $SO^*_{2n}$ those $A$-packets consist of $O_{2n}(F) /  SO_{2n}(F)$--orbits of order 2 or 1 
(hence, not necessarily individual representations) of irreducible discrete series representations under the $O_{2n}(F)$--action
by conjugation on $G.$ The same is true for $F$-inner forms $\bG'$ of $SO_{2n}$ and $SO^*_{2n},$ cf. \cite[Chapters 1.5 and 9.4]{art12}.

\item[4.] Arthur's classifications for $SO_{2n}$ and $SO^*_{2n}$ rely on some expected arguments on the stabilization of the twisted trace formula. So, as Arthur did in \cite{art12}, we shall assume this in Hypotheses \ref{hyp}.a below.

\item[5.] The classification for non quasi-split $F$-inner forms $\bG'$ of $\bG$ is simply stated in \cite[Theorem 9.4.1]{art12} and the proof is expected to be delivered in future work of Arthur or Kaletha, et al.. So, we will assume this in Hypotheses \ref{hyp}.b below.
\end{enumerate} 
\end{rem}
\begin{hyp} \label{hyp}
\begin{enumerate}[(a)]
\item We assume that the twisted trace formula for $GL_n$ and a twisted even orthogonal group (whose identity component is $SO_{2n}$ or $SO^*_{2n}$) can be stabilized \cite[Chapter 3.2]{art12}.

\item We assume that the classification for non quasi-split $F$-inner forms $\bG'$ of $\bG$ is known \cite[Chapter 9.4]{art12}.
\end{enumerate}
\end{hyp}
Following \cite[Chapter 9.2]{art12}, we review the component groups parameterizing $A$--packets and their connection with endoscopic $R$--groups. 
Let $S_{\phi}(\widehat{M})$ denote the group $\pi_0(C_{\phi}(\widehat{M})) := C_{\phi}(\widehat{M}) / C_{\phi}(\widehat{M})^{\circ}$ of connected components. 
Since $\phi$ is elliptic, the identity component $S_{\phi}(\widehat M)^{\circ}$ is contained in $Z(\widehat{M})^{\Gamma}$ (see \cite[Lemma 10.3.1]{kot84} and \cite[Section 7]{art89ast}).
Thus, we have
\[
S_{\phi}(\widehat{M}) =  C_{\phi}(\widehat M) / Z(\widehat{M})^{\Gamma}, 
\]
which can be considered as a finite subgroup of $(\widehat{M})_{\ad} := \widehat{M} / Z(\widehat{M}).$ It turns out that $S_{\phi}(\widehat{M}) $ is a finite abelian $2$-group for our cases \cite[Chapter 1.4]{art12}.
Let $S_{\phi, \scn}(\widehat{M})$ be the full pre-image of $C_{\phi}(\widehat M) / Z(\widehat{M})^{\Gamma}$ in the simply connected cover $\widehat{M}_{\scn}$ of the derived group $\widehat{M}_{\der}$ of $\widehat{M}$ under the isogeny $\widehat{M}_{\scn} \twoheadrightarrow (\widehat{M})_{\ad}$. One thus has the following exact sequence:
\begin{equation} \label{exact for M'}
1 \rightarrow Z(\widehat M_{\scn}) \rightarrow S_{\phi, \scn}(\widehat{M}) \rightarrow S_{\phi}(\widehat{M}) \rightarrow 1.
\end{equation}
It is well-known \cite[p.280]{kot97} that the $F$-inner form $\bM'$ of $\bM$ determines 
a unique character $\zeta_{\bM'}$ on $Z(\widehat M_{\scn})$ whose restriction to $Z(\widehat M_{\scn})^{\Gamma}$ corresponds to 
the isomorphism class of $\bM'$ via the Kottwitz isomorphism \cite[Theorem 1.2]{kot86}. 
We denote by $\Irr(S_{\phi, \scn}(\widehat{M}), \zeta_{\bM'})$ as the set of irreducible representations of $S_{\phi, \scn}(\widehat{M})$ 
whose restriction to $Z(\widehat M_{\scn})$ is equal to the character $\zeta_{\bM'}.$ 
We note that, for the case $\bM'=\bM,$  the character $\zeta_{\bM}$ turns out to be the trivial character $\mathbbm{1}.$ 
Given an elliptic tempered $L$-parameter $\phi$ for $M',$ there is a one-one bijection between $\Pi_{\phi}(M')$ and $\Irr(S_{\phi, \scn}(\widehat{M}), \zeta_{\bM'})$ (see \cite{he00}, \cite{ht01}, \cite{hs11}, and \cite[Theorems 1.5.1 and 9.4.1]{art12}). The following lemma is due to this bijection.
\begin{lm}  \label{lemma for singletons}
Let $\bM$ and $\bM'$ be as above. Given an elliptic tempered $L$-parameter $\phi$ for $M',$ if the $A$--packet $\Pi_{\phi}(M)$ is a singleton, then so is the $A$--packet $\Pi_{\phi}(M').$
\end{lm}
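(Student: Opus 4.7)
The proof should be a short, essentially formal consequence of the parameterization bijection together with the exact sequence \eqref{exact for M'}. Here is the plan.

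First, I would translate the hypothesis and conclusion through the bijections supplied by Arthur's classification (and \cite{he00}, \cite{ht01}, \cite{hs11}). For $\bM$ itself we have $\zeta_{\bM} = \mathbbm{1}$, so
\[
|\Pi_{\phi}(M)| = |\Irr(S_{\phi,\scn}(\widehat{M}), \mathbbm{1})|,
\]
and an irreducible representation of $S_{\phi,\scn}(\widehat{M})$ trivial on $Z(\widehat{M}_{\scn})$ is, by \eqref{exact for M'}, the same thing as an irreducible representation of $S_{\phi}(\widehat{M})$. Thus $\Pi_{\phi}(M)$ is a singleton if and only if $|\Irr(S_{\phi}(\widehat{M}))| = 1$.

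Second, I invoke the fact (recalled just after Hypotheses 3.2, and used throughout Arthur's classification for these classical groups) that $S_{\phi}(\widehat{M})$ is a finite abelian $2$-group. For a finite abelian group, the number of irreducible complex representations equals its order. Consequently, the assumption that $\Pi_{\phi}(M)$ is a singleton forces $S_{\phi}(\widehat{M}) = 1$.

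Third, plugging $S_{\phi}(\widehat{M}) = 1$ into \eqref{exact for M'} gives $S_{\phi,\scn}(\widehat{M}) = Z(\widehat{M}_{\scn})$, which is itself abelian. Hence every element of $\Irr(S_{\phi,\scn}(\widehat{M}), \zeta_{\bM'})$ is just a character of $Z(\widehat{M}_{\scn})$ restricting to $\zeta_{\bM'}$, so the set has exactly one element, namely $\zeta_{\bM'}$ itself. Applying the parameterization bijection again,
\[
|\Pi_{\phi}(M')| = |\Irr(S_{\phi,\scn}(\widehat{M}), \zeta_{\bM'})| = 1,
\]
which is the conclusion. There is no substantive obstacle: the only things being used are the bijection of Arthur's classification (granted under Hypotheses 3.2), the exact sequence \eqref{exact for M'}, and the elementary fact that irreducible representations of a finite abelian group are characters. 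The argument does not depend on the particular inner form $\bM'$, only on the existence of the character $\zeta_{\bM'}$.
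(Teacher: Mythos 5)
Your proposal is correct and follows essentially the same route as the paper: both reduce the hypothesis to $\#\Irr(S_{\phi}(\widehat{M}))=1$ via $\zeta_{\bM}=\mathbbm{1}$, deduce $S_{\phi}(\widehat{M})=\{1\}$, use the exact sequence \eqref{exact for M'} to get $S_{\phi,\scn}(\widehat{M})\s Z(\widehat{M}_{\scn})$, and conclude that $\Irr(S_{\phi,\scn}(\widehat{M}),\zeta_{\bM'})=\{\zeta_{\bM'}\}$ is a singleton. The only cosmetic difference is that you invoke the abelian $2$-group fact to count characters, whereas the paper simply uses that a finite group with a single irreducible representation is trivial; this changes nothing of substance.
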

\begin{proof}
It suffices to show that $\Irr(S_{\phi, \scn}(\widehat{M}), \zeta_{\bM'})$ is a singleton.
Note that, if $\bM=\bM',$ then $\Irr(S_{\phi, \scn}(\widehat{M}), \zeta_{\bM}) = \Irr(S_{\phi}(\widehat{M})).$
Since $\Pi_{\phi}(M)$ is a singleton, we have $\# \Irr(S_{\phi}(\widehat{M})) =1,$ which implies  
$S_{\phi}(\widehat{M})= \{ 1 \}.$ 
From the exact sequence \eqref{exact for M'}, 
we have $S_{\phi, \scn}(\widehat{M}) \s Z(\widehat M_{\scn}),$ which is finite and abelian. Thus, we have $\Irr(S_{\phi, \scn}(\widehat{M}), \zeta_{\bM'}) = \{ \zeta_{\bM'}   \}.$ This completes the proof.
\end{proof}
Through a natural embedding $\widehat{M} \hookrightarrow \widehat{G},$ we have
\begin{equation} \label{embedding to G}
\phi : W_F \times SL_2(\CC) \rightarrow {^L}{M} \hookrightarrow \,^L{G}.
\end{equation}
It follows that the $A$--packet $\Pi_{\phi}(G)$ of $G$ associated to $\phi$ is comprised of irreducible (hence, tempered) constituents of induced representations from all members in $\Pi_{\phi}(M)$ (see \cite[Theorem 1.5.1]{art12}). 
The same is true for the $A$--packet $\Pi_{\phi}(G')$ of $G'$ (see \cite[Theorem 9.4.1]{art12}). 
As mentioned in Remark \ref{rem for A-packets}.3, each member in $\Pi_{\phi}(G)$ and $\Pi_{\phi}(G')$ for even orthogonal groups represents the orbit of order 2 or 1 of irreducible tempered representations rather than an individual representation.
We consider $C_{\phi}(\widehat G) / Z(\widehat{G})^{\Gamma}$ as a subgroup of $(\widehat{G})_{\ad}.$ Write $C_{\phi, \scn}(\widehat{G})$ for the full pre-image of $C_{\phi}(\widehat G) / Z(\widehat{G})^{\Gamma}$ in $\widehat{G}_{\scn}$ under the isogeny $\widehat{G}_{\scn} \twoheadrightarrow (\widehat{G})_{\ad}.$ We then have the following exact sequence
\[
1 \rightarrow Z(\widehat G_{\scn}) \rightarrow C_{\phi, \scn}(\widehat{G}) \rightarrow C_{\phi}(\widehat G) / Z(\widehat{G})^{\Gamma} \rightarrow 1.
\]
Since $\phi$ may not be elliptic for $G,$ there is no guarantee that 
$C_{\phi}(\widehat G) / Z(\widehat{G})^{\Gamma}$ is finite. We let 
\begin{align*}
S_{\phi}(\widehat{G}) & :=  \pi_0(C_{\phi}(\widehat G) / Z(\widehat{G})^{\Gamma}) \\
S_{\phi, \scn}(\widehat{G}) & := \pi_0(C_{\phi, \scn}(\widehat{G})) \\
\widehat Z_{\phi, \scn} & := Z(\widehat G_{\scn}) / (Z(\widehat G_{\scn}) \cap C_{\phi, \scn}(\widehat{G})^{\circ}).
\end{align*}
We then have a central extension 
\[
1 \rightarrow \widehat Z_{\phi, \scn}  \rightarrow S_{\phi, \scn}(\widehat{G}) \rightarrow S_{\phi}(\widehat{G}) \rightarrow 1,
\]
cf. \cite[(9.2.2)]{art12}.
Let $\zeta_{\bG'}$ be a unique character on $Z(\widehat G_{\scn})$ whose restriction to $Z(\widehat G_{\scn})^{\Gamma}$ corresponds to 
the class of the $F$-inner form $\bG'$ of $\bG$ via the Kottwitz isomorphism \cite[Theorem 1.2]{kot86}. 
We denote by $\Irr(S_{\phi, \scn}(\widehat{G}), \zeta_{\bG'})$ as the set of irreducible representations of $S_{\phi, \scn}(\widehat{G})$ 
with central character $\zeta_{\bG'}$ on  $Z(\widehat G_{\scn}).$ 
Given an elliptic tempered $L$-parameter $\phi$ for $M'$ which is tempered for $G'$ via the composite \eqref{embedding to G}, 
there is a one-one bijection between $\Pi_{\phi}(G')$ and $\Irr(S_{\phi, \scn}(\widehat{G}), \zeta_{\bG'})$ \cite[9.4.1]{art12}. 
Note that Lemma \ref{lemma for singletons} is also true for tempered $L$-parameters for $G$ and $G'.$
Further, the diagrams in \cite[(2.4.3) and (9.2.16)]{art12} yield
\begin{equation} \label{useful exact}
1 \longrightarrow S_{\phi}(\widehat{M})
\longrightarrow S_{\phi}(\widehat{G})
\longrightarrow R_{\phi}
\longrightarrow 1
\end{equation}
and
\begin{equation} \label{useful exact 2}
1 \longrightarrow S_{\phi, \scn}(\widehat{M})
\longrightarrow S_{\phi, \scn}(\widehat{G})
\longrightarrow R_{\phi}
\longrightarrow 1.
\end{equation}
\subsection{Weyl group actions} \label{weyl group actions}
In this section, we describe the action of the Weyl group $W_{M}$ (respectively, $ W_{M'},$ $W_{\widehat M}$) on $M$ (respectively, $M',$ $\widehat M$) and $\sigma$ (respectively, $\sigma',$ $\phi$). 
We identify $\widehat G = \widehat{G'}$ and $\widehat M = \widehat{M'}.$ 
Recall that
the Weyl groups are 
$W_M = W(\bG, \bA_\bM) := N_\bG(\bA_\bM) / Z_\bG(\bA_\bM),$
$W_{M'} = W(\bG', \bA_{\bM'}) := N_{\bG'}(\bA_{\bM'}) / Z_{\bG'}(\bA_{\bM'}),$ and
$W_{\widehat M} = W(\widehat G, A_{\widehat M}) := N_{\widehat G}(A_{\widehat M}) / Z_{\widehat G}(A_{\widehat M}).$
Through the duality
\begin{equation} \label{duality}
s_{\alpha} \mapsto s_{\alpha^{\vee}}
\end{equation}
between simple reflections for $\alpha \in \Delta,$ we have 
\begin{equation}  \label{two isos}
W_M \s W_{\widehat M} \quad W_{\widehat{M}'} \s W_{M'},
\end{equation}
cf. \cite[Chapter 2.4]{art12}.  
We thus identify
\begin{equation}  \label{identity for Weyls}
W_M = W_{\widehat M}=W_{\widehat{M}'} =W_{M'}.
\end{equation}
We start with the following lemma which will be used to describe the actions of Weyl groups.
\begin{lm} \label{conjugation lemma}
Denote by $D$ a central division algebra (possibly, $F$ itself) of dimension $d^2$ over $F.$ Let $\pi \in \Irr(GL_m(D))$ be given. Then we have
\begin{equation} \label{iso among several reps}
\widetilde{\pi} \s {^t}{\pi}^{-1} \s \bar{\pi}^{-1} \s  {^t}\bar{\pi}^{-1},
\end{equation}
where ${^t}{\pi}^{-1}(g):=\pi({^t}g^{-1}),$ $\bar{\pi}^{-1}(g):=\pi(\bar{g}^{-1}),$ and $\bar{g}$ denotes the usual involution of $g$ in $GL_m(D),$ unless $D=F,$ in which $\bar{g}=g.$
\end{lm}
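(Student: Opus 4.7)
The plan is to prove the chain of isomorphisms by first reducing the three nontrivial equivalences to a single one, via the observation that the various duality automorphisms of $GL_m(D)$ differ by inner automorphisms, and then proving the remaining isomorphism by character theory or by reduction to the classical Gelfand--Kazhdan theorem.

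The case $D=F$ is the classical Gelfand--Kazhdan theorem: $\widetilde{\pi} \simeq {^t}\pi^{-1}$, and the remaining identifications become tautologies since $\bar g = g$ by convention. For $D$ a proper central division algebra, I would first observe that the three maps $g \mapsto {^t}g^{-1}$, $g \mapsto \bar g^{-1}$, and $g \mapsto {^t}\bar g^{-1}$ are all group automorphisms of $GL_m(D)$, each being the composition of an anti-involution on $M_m(D)$ with the inverse map (once the transpose is interpreted via the identification $M_m(D)^{\mathrm{op}} \simeq M_m(D^{\mathrm{op}})$ combined with the canonical isomorphism $D \simeq D^{\mathrm{op}}$). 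By the Skolem--Noether theorem applied to the central simple algebra $M_m(D)$, any two anti-involutions on $M_m(D)$ whose restrictions to the center $F$ coincide differ by conjugation by an invertible element. Hence these three automorphisms pairwise differ by inner automorphisms of $GL_m(D)$, and their pullbacks of $\pi$ are mutually isomorphic as admissible representations.

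It remains to show that one of them, say $\bar\pi^{-1}$, is isomorphic to the contragredient $\widetilde{\pi}$. I would use character theory: by the Harish-Chandra regularity theorem, both $\widetilde{\pi}$ and $\bar\pi^{-1}$ have distribution characters that restrict to locally integrable functions on the regular semisimple locus $GL_m(D)^{\mathrm{rs}}$, and two irreducible admissible representations are isomorphic iff these functions agree there. Since $\theta_{\widetilde{\pi}}(g) = \theta_{\pi}(g^{-1})$ and $\theta_{\bar\pi^{-1}}(g) = \theta_{\pi}(\bar g^{-1})$, the claim reduces to the assertion that $g$ and $\bar g$ are $GL_m(D)$-conjugate for regular semisimple $g$. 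This follows from the classification of such conjugacy classes by their reduced characteristic polynomials, together with the identity $\mathrm{Nrd}(X - g) = \mathrm{Nrd}(X - \bar g)$ provided by the canonical involution. Alternatively, one can transfer the classical Gelfand--Kazhdan identity from $GL_{md}(F)$ to $GL_m(D)$ via the Jacquet--Langlands correspondence, using that it commutes with the contragredient.

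The main expected obstacle is the noncommutativity of $D$: the naive transpose ${^t}g$ of a matrix in $M_m(D)$ is not literally an anti-homomorphism of $M_m(D)$ to itself, so one must fix a precise interpretation before even stating that ${^t}\pi^{-1}$ is a representation. Getting this bookkeeping right, so that the three maps are genuinely automorphisms of $GL_m(D)$ that induce the same outer automorphism class, is the key delicate point; once that is in place, the Skolem--Noether reduction and the character computation both proceed without difficulty.
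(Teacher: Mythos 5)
Your core argument is the same as the paper's: both proofs come down to Harish--Chandra characters being conjugation-invariant class functions on the regular semisimple set, plus the fact that the relevant elements of $GL_m(D)$ are rationally conjugate because regular semisimple classes are determined by their (reduced) characteristic polynomials. The difference is in how the two ingredients are packaged. The paper does not pre-reduce via Skolem--Noether; it treats all three twists at once by showing directly that ${}^t g,\ \bar g,\ {}^t\bar g$ are conjugate over $F$: they are $\bar F$-conjugate since they share a characteristic polynomial, and $\bar F$-conjugacy implies $F$-conjugacy because the obstruction lives in $\ker\bigl(H^1(F, Z_{GL_n(\bar F)}(y)) \to H^1(F,\bG')\bigr)$ and the centralizer of a regular semisimple element is a product of tori $\prod_j E_j^\times$, killed by Hilbert's Theorem 90. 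That Galois-cohomology step is exactly the justification your appeal to ``the classification of conjugacy classes by reduced characteristic polynomials'' is standing in for, so your proposal cites what the paper proves; this is acceptable but you should either supply the Hilbert 90 argument or a precise reference. Your Skolem--Noether reduction is a legitimate alternative organization and has the merit of forcing the bookkeeping issue you correctly flag (the naive transpose is not an anti-automorphism of $M_m(D)$, a point the paper glosses over), at the cost of an extra layer the paper's uniform conjugacy argument makes unnecessary. Your suggested Jacquet--Langlands alternative should be treated with care: the DKV correspondence used in this paper is for essentially square-integrable representations, so transferring the statement for arbitrary $\pi\in\Irr(GL_m(D))$ that way would need Badulescu-type extensions and a justification that the transfer commutes with contragredients; the direct character computation is the cleaner route, and it is the one the paper takes.
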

\begin{proof}
Following the idea in the proof of \cite[Lemma 2.2]{ms00}, it suffices to prove that all representations in \eqref{iso among several reps} have the same (Harish-Chandra) character over regular semi-simple elements.
Given a regular semi-simple element $g \in GL_m(D),$ we note that all ${^t}g, \bar g,$ and ${^t}\bar{g}$ are conjugate over $\bar F$ 
because they are regular semisimple and have the same characteristic polynomial over $F.$
Since the character $\Theta_{\widetilde{\pi}}(g)$ equals $\Theta_{\pi}(g^{-1})$ \cite[Introduction d.]{dkv} and Harish-Chandra characters are invariant under conjugation by $F$-points,
it thus remains to show that ${^t}g, \bar g$ and ${^t}\bar{g}$ are all conjugate over $F.$ 

Let $x$ and $y$ be two $\bar F $--conjugate regular semisimple elements in $GL_m(D).$ 
We denote by $\bG'$ the $F$--inner form of $GL_n$ with $\bG'(F)=GL_m(D).$
Since $\bG'(\bar F)=GL_n(\bar F)$ with $n=md,$
we then have $h \in GL_n(\bar F)$ such that $x = hyh^{-1}.$  
It then follows that
\[
h y h^{-1}=x={^\upsilon x}=({^\upsilon h})y({^\upsilon h^{-1}})
\]
for any $\upsilon \in \Gamma,$ so that 
$h^{-1}({^\upsilon h})$ lies in the centralizer $Z_{GL_n(\bar F)}(y)$ of $y$ in $GL_n(\bar F).$ 
Thus, the mapping $\upsilon \mapsto h^{-1}({^\upsilon h})$ from $\Gamma$ to $Z_{GL_n(\bar F)}(y)$ gives a $1$--cocycle class in the Galois cohomology
$H^1(F, Z_{GL_n(\bar F)}(y)).$
We further note from \cite[Section 3]{kot82} that 
\[
\ker \Big( H^1(F, Z_{GL_n(\bar F)}(y)) \rightarrow H^1(F, \bG') \Big)
\]
is in the bijection with the set of $F$--conjugacy classes in $\bar F$--conjugacy classes of $y$. 
Since $y$ is a regular semisimple element in $GL_m(D),$ 
it follows that $Z_{GL_n(\bar F)}(y)$ forms a torus which is isomorphic to $\prod_{j} E_j^{\times}$ over $F,$ where $E_j$ is an extension of $F$ of degree $[E_j:F]$ with $\sum [E_j:F] = m.$ 
By Hilbert's Theorem $90,$ we have $H^1(F, Z_{GL_n(\bar F)}(y))=1,$ which implies that $x$ and $y$ are conjugate over $F.$
We thus conclude that all ${^t}g, \bar g,$ and ${^t}\bar{g}$  are conjugate over $F.$
\end{proof}
\subsubsection{Weyl group actions on Levi subgroups and their representations} \label{WM-actions on Levis}
For simplicity, in Section \ref{WM-actions on Levis}, we will write $\bG$ for both quasi-split classical groups $SO_{2n+1},$ $Sp_{2n},$ $SO_{2n},$ $SO_{2n}^*$ and their non quasi-split $F$--inner forms.
We describe the action of $W_M$ on an irreducible representation $\sigma \in \Irr(M),$ based on the results in \cite{goldberg-class} for split cases.
Recall from Section \ref{structure} that any of $M$ is of the form
\[
GL_{n_1}(D) \times GL_{n_2}(D) \times \cdots \times GL_{n_k}(D) \times G_{-}(m),
\]
where $D$ is a central division algebra of dimension $1$ or $4$ over $F$ and $\bG_{-}$ is the same type as $\bG$ with lower rank $m.$ 
We denote by $S_k$ the symmetric group in $k$ letters.
Since we have from \cite[Section 2]{goldberg-class} and Appendix \ref{R-gps for SO*}
\begin{equation} \label{desc WM}
W_M \subset S_k \ltimes \ZZ_2^k,
\end{equation}
the identification \eqref{identity for Weyls} implies that \eqref{desc WM} is true for non quasi-split cases.
More precisely, $W_M\simeq S\ltimes\sC,$ 
where  $S=\langle (ij)|n_i=n_j \rangle,$ and $\sC \subset \ZZ_2^k.$ 
For $g \in M,$ write
\[
g=(g_1,\dots g_i,\dots,g_j,\dots, g_k,h) \in GL_{n_1}(D) \times GL_{n_2}(D) \times \cdots \times GL_{n_k}(D) \times G_{-}(m).
\]
The permutation $(ij)$ acts on $g \in M$
\begin{equation}  \label{action ij}
(ij):(g_1,\dots g_i,\dots,g_j,\dots, g_k,h) \mapsto (g_1,\dots,g_j,\dots,g_i,\dots,g_k,h).
\end{equation}
The finite $2$-group $\ZZ_2^k$ is generated by elements $C_i$ which acts on $g \in M$
\begin{equation}  \label{action Ci}
C_i:(g_1,\dots,g_i,\dots,g_k,h) \mapsto (g_1,\dots,\,{^t}\bar g_i^{-1},\dots,g_k,h),
\end{equation}
where $~~\bar{~~}~~$ denotes the involution  in Lemma \ref{conjugation lemma}, which is trivial for quasi-split cases.
Moreover, 
if $\bG$ is of type $\bold{B_n}$ or $\bold{C_n},$ then $\sC=\ZZ_2^k,$ cf. \cite[Section 2]{herb93}.
If $\bG$ is of type $\bold{D_n}$ (i.e., either $\bold{^1 D_n}$ or $\bold{^2 D_n}$), then $\sC=\sC_1\times\sC_2,$ where $\sC_1=\langle C_i|n_i\text{ is even}\rangle,$ and
\begin{equation} \label{C_2}
\sC_2=\begin{cases} \langle C_iC_j|n_i,n_j\text{ are odd}\rangle, &\text{if }m=0;\\\langle C_ic_0|n_i\text{ is odd}\rangle, &\text{if }m>0,
\end{cases}
\end{equation}
where $c_0$ is the outer automorphism given by the reflection $\alpha_{n-1} \leftrightarrow \alpha_n$ of the Dynkin diagram, cf. \cite[Section 3]{herb93}. 
Set $\sigma$ to be $\sigma_1 \otimes \sigma_2 \otimes \cdots \otimes \sigma_k \otimes \tau.$
From Lemma \ref{conjugation lemma}, \eqref{action ij}, and \eqref{action Ci}, we then have
\begin{gather*}
(ij)\sigma=\sigma_1\otimes\sigma_j\otimes\cdots
\otimes\sigma_i\otimes\cdots
\otimes\sigma_k\otimes\tau;\\
C_i\sigma=\sigma_1\otimes\cdots
\otimes\tilde\sigma_i\otimes\cdots
\otimes\sigma_k\otimes\tau;\\
C_ic_0\sigma=\sigma_1\otimes\cdots
\otimes\tilde\sigma_i\otimes\cdots
\otimes\sigma_k\otimes c_0\tau,
\end{gather*}
and these describe the action of $W_M$ on $\sigma.$
\subsubsection{Weyl group actions on $L$-group $\widehat M$ and $L$--parameter $\phi$} \label{action on parameters}
In this section we describe the action of the Weyl group $W_{\widehat{M}}$
on $L$--group $\widehat M$
and on the elliptic tempered parameter $\phi.$
From \eqref{identity for Weyls} and \eqref{desc WM}, we have
\[
W_{\widehat M} \subset S_k \ltimes \ZZ_2^k.
\]
More precisely, $W_{\widehat M}\simeq S\ltimes\sC,$ 
where  $S=\langle (ij)|n_i=n_j \rangle,$ and $\sC \subset \ZZ_2^k.$ 
For $\hat g \in \widehat M,$ write
\[
\hat g = (\hat g_1,\dots \hat g_i,\dots,\hat g_j,\dots, \hat g_k,\hat h) \in GL_{n_1}(\CC) \times GL_{n_2}(\CC) \times \cdots \times GL_{n_k}(\CC) \times \widehat G_{-}(m).
\]
By the duality \eqref{duality}, 
The permutation $(ij)$ acts on $\hat g \in \widehat M$
\begin{equation}  \label{action ij L-group}
(ij):(\hat g_1,\dots \hat g_i,\dots,\hat g_j,\dots, \hat g_k,\hat h) \mapsto (\hat g_1,\dots,\hat g_j,\dots,\hat g_i,\dots,\hat g_k,\hat h).
\end{equation}
The finite $2$-group $\ZZ_2^k$ is generated by elements $C_i$ which acts on $\hat g \in \widehat M$
\[
C_i:(\hat g_1,\dots,\hat g_i,\dots,\hat g_k,\hat h)\mapsto (\hat g_1,\dots,\,{^t}{\hat g}_i^{-1},\dots,\hat g_k,\hat h).
\]
Moreover, 
if $\bG$ is of type $\bold{B_n}$ or $\bold{C_n},$ then $\sC=\ZZ_2^k.$
If $\bG$ is of type $\bold{D_n}$ (i.e., either $\bold{^1 D_n}$ or $\bold{^2 D_n}$), then $\sC=\sC_1\times\sC_2,$ where $\sC_1=\langle C_i|n_i\text{ is even}\rangle,$ and
\begin{equation}  \label{C_2 for L}
\sC_2=\begin{cases} \langle C_iC_j|n_i,n_j\text{ are odd}\rangle, &\text{if }m=0;\\\langle C_ic_0|n_i\text{ is odd}\rangle, &\text{if }m>0,
\end{cases}
\end{equation}
where $c_0$ is the outer automorphism given by the reflection $\alpha^{\vee}_{n-1}\leftrightarrow \alpha^{\vee}_n$ of the Dynkin diagram. Set $\phi$ to be $\phi_1 \oplus \phi_2 \oplus \cdots \oplus \phi_k \oplus \phi_-.$
Since $\phi_i({^t}{\hat g}_i^{-1}) = \tilde{\phi}({\hat g}_i)$ for $\hat g_i \in GL_{n_i}(\CC)$ \cite{ht01, he00, adams-vogan}, where $\tilde{\phi}$ is the contragredient of $\phi,$ we have
\begin{gather*}
(ij)\phi=\phi_1\oplus\phi_j\oplus\cdots
\oplus\phi_i\oplus\cdots\oplus\phi_k
\oplus\phi_{-};\\
C_i\phi=\phi_1\oplus\cdots
\oplus\tilde\phi_i\oplus\cdots
\oplus\phi_k\oplus\phi_{-};\\
C_ic_0  \phi=\phi_1\oplus\cdots
\oplus\tilde\phi_i\oplus\cdots
\oplus\phi_k\oplus c_0\phi_{-},
\end{gather*}
and these describe the action of $W_{\widehat M}$ on $\phi.$
\begin{rem}  \label{remark on c_0}
For the set $\sC_2$ defined in \eqref{C_2} and \eqref{C_2 for L}, 
we have have either $m>0$ or each $m_i$ is even due to Section \ref{structure}. 
It hence follows that $\{C_iC_j|n_i,n_j\text{ are odd}\}$ is empty, 
and that $\{C_ic_0|n_i\text{ is odd}\}$ is non-empty if only if $\bG$ is of type labelled ``n: any'' of $\bold{^1 D_n}$ in Section \ref{structure}.
\end{rem}
\subsection{Main theorem} \label{main theorem}
In this section, we prove the following theorem, which asserts that the three $R$--groups, Knapp-Stein, Arthur and endoscopic, are identical if they are attached to the same elliptic tempered $L$--parameter.
\begin{thm} \label{iso bw R}
Let $\phi : W_F \times SL_2(\CC) \rightarrow {^L}{M}$ be an elliptic tempered $L$-parameter. Given any $\sigma \in \Pi_{\phi}(M)$ and $\sigma' \in \Pi_{\phi}(M'),$ we have
\[
R_{\sigma} \s R_{\phi, \sigma} \s R_{\phi} \s R_{\phi, \sigma'} \s R_{\sigma'}.
\]
\end{thm}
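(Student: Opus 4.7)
My plan is to establish the five-term chain through four separate isomorphisms, making systematic use of the identification $W_M = W_{\widehat M} = W_{\widehat{M}'} = W_{M'}$ from \eqref{identity for Weyls} and of the parallel descriptions of Weyl actions on representations (Section \ref{WM-actions on Levis}) and on parameters (Section \ref{action on parameters}). Concretely, I will (i) show $W_\phi \subseteq W(\sigma)$ and $W_\phi \subseteq W(\sigma')$, which collapses the Arthur $R$-groups onto the endoscopic one and yields $R_{\phi,\sigma} \simeq R_\phi \simeq R_{\phi,\sigma'}$, and (ii) match the Knapp-Stein $R$-groups with the Arthur $R$-groups on each side, producing $R_\sigma \simeq R_{\phi,\sigma}$ and $R_{\sigma'} \simeq R_{\phi,\sigma'}$.

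For step (i), I would decompose $\phi = \phi_1 \oplus \cdots \oplus \phi_k \oplus \phi_-$ and $\sigma = \sigma_1 \otimes \cdots \otimes \sigma_k \otimes \tau$ in accordance with \eqref{form of M}, and similarly for $\sigma'$ via \eqref{form of M'}, with each $\sigma_i$ corresponding to $\phi_i$ under the local Langlands correspondence for $GL_{n_i}(F)$ or $GL_{m_i}(D)$ (cf.\ \cite{ht01, he00, hs11}) and $\tau$ to $\phi_-$ via \cite{art12, js04, mw03}. The action formulas at the end of Section \ref{action on parameters} show that $w \in W_\phi$ iff the permutation and contragredient operations induced by $w$ return an isomorphic tuple $(\phi_1,\dots,\phi_k,\phi_-)$; comparing these with the parallel formulas of Section \ref{WM-actions on Levis} and invoking the above correspondences (for $\sigma'$, under Hypothesis \ref{hyp}(b)) shows that the same $w$ fixes the representation side. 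Hence $W_{\phi,\sigma}=W_\phi$ and $W_{\phi,\sigma}^\circ=W_\phi^\circ$, giving $R_{\phi,\sigma}=R_\phi$, and likewise $R_{\phi,\sigma'}=R_\phi$. Remark \ref{remark on c_0} then rules out the problematic $c_0$-twists in all but one Dynkin case, which can be treated separately.

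For step (ii), the quasi-split comparison $R_\sigma \simeq R_{\phi,\sigma}$ is carried out in \cite{goldberg-class, herb93} for types $\mathbf{B}_n$, $\mathbf{C}_n$, ${}^1\mathbf{D}_n$, and extended to ${}^2\mathbf{D}_n$ in Appendix \ref{R-gps for SO*}: the reflections generating $W'_\sigma$ match the simple reflections of $W_\phi^\circ$ via the identification of the vanishing locus of Plancherel measures with the roots of the connected centralizer, and the ``positive on $\Delta'_\sigma$'' condition picks out coset representatives of $W_\phi/W_\phi^\circ$. For the inner form, Hypothesis \ref{hyp} together with the structural parallels between \eqref{form of M} and \eqref{form of M'} should give $\mu_\alpha(\sigma') = \mu_\alpha(\sigma)$ for every reduced root $\alpha$, so that $\Delta'_{\sigma'}$ matches $\Delta'_\sigma$ under the identification of root systems, and the same reflection/positivity argument gives $R_{\sigma'} \simeq R_{\phi,\sigma'}$.

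The hard part is the inner form side of (ii): verifying that the Arthur parametrization of $\Pi_\phi(M')$ afforded by Hypothesis \ref{hyp}(b) is compatible with the Weyl action formulas of Section \ref{WM-actions on Levis} (so that step (i) carries over from $\sigma$ to $\sigma'$), and that the Plancherel measures for corresponding representations on $M$ and $M'$ agree root-by-root. Once these compatibilities are in place, transitivity assembles the four isomorphisms into the full chain of Theorem \ref{iso bw R}.
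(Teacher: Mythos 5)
Your step (i) is sound and corresponds to one half of what the paper proves as Lemma \ref{identity of W}: decomposing $\phi=\phi_1\oplus\cdots\oplus\phi_k\oplus\phi_-$ and comparing the Weyl actions of Sections \ref{WM-actions on Levis} and \ref{action on parameters} through the local Langlands correspondence does give $W_\phi\subseteq W(\sigma)$ and $W_\phi\subseteq W(\sigma')$, hence $R_{\phi,\sigma}\simeq R_\phi\simeq R_{\phi,\sigma'}$. Note, however, that for step (ii) you also need the reverse inclusions $W(\sigma)\subseteq W_\phi$ and $W(\sigma')\subseteq W_\phi$ (obtained by running the same correspondence from representations back to parameters, as in the second half of Lemma \ref{identity of W}); without them $R_\sigma=W(\sigma)/W'_\sigma$ could a priori be strictly larger than $R_\phi$, and your proposal never addresses this direction explicitly.

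The genuine gap is in step (ii), which is exactly the content of the paper's Lemma \ref{identity of W'}, namely $W'(\sigma)=W^\circ_\phi=W'(\sigma')$. First, the quasi-split comparison $R_\sigma\simeq R_{\phi,\sigma}$ is not ``carried out in \cite{goldberg-class,herb93}'' nor in Appendix \ref{R-gps for SO*}: those references compute the Knapp--Stein $R$--groups only and never compare them with endoscopic or Arthur $R$--groups; the quasi-split comparison comes from \cite[Chapters 6.5--6.6]{art12}, and even the paper borrows from there only the single containment \eqref{useful arthur's arg}, $W'(\sigma)\supset W^\circ_\phi$. Second, your inner-form input --- that Hypotheses \ref{hyp} ``should give'' $\mu_\alpha(\sigma')=\mu_\alpha(\sigma)$ root by root --- is both stronger than what is needed and not a consequence of those hypotheses, which concern the stabilization of the twisted trace formula and the validity of Arthur's classification for inner forms, not a transfer of Plancherel measures. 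What the paper actually proves is only the equivalence $\mu_\alpha(\sigma)=0\Leftrightarrow\mu_\alpha(\sigma')=0$, and it does so indirectly: one reduces to the maximal Levi subgroup $\bM_\alpha=Z_\bG(A_\alpha)$, where the exact sequences \eqref{useful exact} and \eqref{useful exact 2} together with the parametrizations of packets by $\Irr(S_{\phi,\scn}(\widehat M),\zeta_{\bM'})$ (Lemma \ref{lemma for singletons}) show that $R_\phi=1$ forces equality of packet sizes and hence irreducibility of the rank-one induced representations on both the quasi-split and the inner-form side; this maximal-case analysis yields the containment \eqref{containment 1}, and combined with \eqref{useful arthur's arg} and the resulting identity $\Delta'_\sigma=\Delta'_{\sigma'}$ it yields \eqref{containment 2}. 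Since you explicitly defer precisely these compatibilities as ``the hard part'' without supplying an argument, the proposal as written does not establish $R_\sigma\simeq R_{\phi,\sigma}$ or $R_{\sigma'}\simeq R_{\phi,\sigma'}$, and hence does not prove the theorem.
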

\begin{rem} \label{q-split and L-packet for SO}
\begin{itemize}
\item[1.] For the quasi-split case, the isomorphism
\[
R_{\sigma} \s R_{\phi, \sigma} \s R_{\phi}
\]
is deduced from the argument in \cite[Chapters 6.5 \& 6.6]{art12}. However, we will develop our own proof which works for both quasi-split and non quasi-split cases, 
except the argument in \cite[p.346]{art12} by Arthur that 
\begin{equation} \label{useful arthur's arg}
W'(\sigma) \supset W^{\circ}_{\phi}.
\end{equation}
This will be used to verify 
the containment \eqref{containment 2} below.

\item[2.] For $SO_{2n}$ and $SO^*_{2n},$ since our $L$-parameter $\phi$ for $M$ is a generic $A$-parameter as mentioned in Remark \ref{rem for A-packets}.1, it is true from \cite[Theorem 2.2.4]{art12} that 
\begin{equation*}  \label{c0 actions for M}
c_0 \phi \s \phi \Longrightarrow \# \Pi_{\phi}(M)=1. 
\end{equation*}
Moreover, by Lemma \ref{lemma for singletons}, we have
\begin{equation*}  \label{c0 actions for M'}
c_0 \phi \s \phi \Longrightarrow \# \Pi_{\phi}(M')=1.
\end{equation*}
Set $\Pi_{\phi}(M)=\{ \pi \}$ and $\Pi_{\phi}(M')=\{ \pi' \}.$
Thus, we have
\[
c_0 \phi \s \phi \Longrightarrow c_0 \pi \s \pi 
~~~ \text{and} ~~~
c_0 \pi' \s \pi'.
\]
\end{itemize}
\end{rem}
Combining Theorem \ref{iso bw R} with the second named author's result for $R_{\sigma}$ in \cite{goldberg-class}, 
we obtain the following corollaries which give a description of $R_{\sigma'}.$
\begin{cor}
Let $\phi : W_F \times SL_2(\CC) \rightarrow {^L}{M}$ be an elliptic tempered $L$-parameter. Let $\sigma \in \Pi_{\phi}(M)$ and $\sigma' \in \Pi_{\phi}(M')$ be given. Then, $\ii_{G,M}(\sigma)$ is irreducible if and only if $\ii_{G',M'}(\sigma')$ is irreducible.
\end{cor}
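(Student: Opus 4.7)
The plan is to reduce the irreducibility question on both sides to the triviality of the Knapp-Stein $R$-group, and then to transport the conclusion between the quasi-split and non quasi-split setting by invoking Theorem \ref{iso bw R}.

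First, I would apply the Knapp-Stein--Silberger theorem (Theorem \ref{thm for Knapp-Stein-Sil}) on each of $G$ and $G'$. Since the hypotheses $\sigma\in\Pi_\phi(M)$ and $\sigma'\in\Pi_\phi(M')$ with $\phi$ elliptic tempered force $\sigma\in\Pi_{\disc}(M)$ and $\sigma'\in\Pi_{\disc}(M')$, this yields isomorphisms
\[
C(\sigma)\simeq\CC[R_\sigma]_\eta,\qquad C(\sigma')\simeq\CC[R_{\sigma'}]_{\eta'}
\]
for suitable $2$-cocycles $\eta$, $\eta'$. Since a twisted group algebra $\CC[H]_\eta$ of a finite group $H$ has dimension $|H|$ regardless of the cocycle, and since $\ii_{G,M}(\sigma)$ is irreducible exactly when its commuting algebra is one-dimensional, I obtain the clean equivalence
\[
\ii_{G,M}(\sigma)\text{ irreducible}\iff R_\sigma=1,
\]
with the analogous equivalence for $\sigma'$ on the $G'$-side.

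Next, I would invoke Theorem \ref{iso bw R}, which provides the chain $R_\sigma\simeq R_{\phi,\sigma}\simeq R_\phi\simeq R_{\phi,\sigma'}\simeq R_{\sigma'}$. In particular $R_\sigma$ is trivial if and only if $R_{\sigma'}$ is trivial, so combining with the equivalences of the previous paragraph gives the claimed irreducibility transfer.

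In short, the corollary is a formal consequence of the main theorem together with the structure theorem for commuting algebras of parabolically induced representations from discrete series. All the technical work—handling the endoscopic $R$-group, verifying the containments $W(\sigma)\cap W_\phi=W_{\phi,\sigma}$ etc., and arranging the Hypotheses \ref{hyp}—is absorbed into Theorem \ref{iso bw R}; the only remaining point worth checking is that the Knapp-Stein theorem is valid in the non quasi-split setting, but this is a general $p$-adic result and presents no obstacle. No cocycle triviality argument is required here, since irreducibility is equivalent to the group $R$ being trivial, in which case the twisted group algebra is automatically $\CC$.
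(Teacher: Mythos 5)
Your proposal is correct and follows essentially the same route the paper intends: the corollary is stated as an immediate consequence of Theorem \ref{iso bw R}, with irreducibility of $\ii_{G,M}(\sigma)$ (resp. $\ii_{G',M'}(\sigma')$) equivalent to triviality of $R_\sigma$ (resp. $R_{\sigma'}$) via the Knapp--Stein--Silberger theorem, which holds for arbitrary connected reductive $p$-adic groups and hence for the inner forms. Your observation that no cocycle-triviality argument is needed, since $\dim \CC[R]_\eta=|R|$ independently of $\eta$, is exactly the right point.
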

\begin{cor} \label{r-groups for G'=bc}
Let $\bM'$ be an $F$-Levi subgroup of $\bG'$ of type $\bold{B_n}$ or $\bold{C_n}.$ Let $\phi : W_F \times SL_2(\CC) \rightarrow {^L}{M}$ be an elliptic tempered $L$-parameter. Let $\sigma' \in \Pi_{\phi}(M')$ be given. Set 
\[
I(\sigma') = \{ 1 \leq i \leq k : \sigma'_i \s  \widetilde{\sigma}'_i ~~\text{and}~~ \ii_{G'(n_i + m), ~ GL(n_i)\times G'(m)} (\sigma' \otimes \tau') ~~\text{is reducible}~  \},
\]
and $d$ is the number of inequivalent $\sigma'_i$ such that $i \in I(\sigma').$ Then, we have
\[
R_{\sigma'} \s \ZZ_2^d,
\]
and $R_{\sigma'}$ is generated by  the $d$  sign changes 
\[
\{C_i | i \in I_{\sigma'},\text{ and }\sigma_j'\not\simeq\sigma_i'\text{ for all } j>i\}.
\]
\end{cor}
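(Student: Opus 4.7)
The strategy is to deduce the description of $R_{\sigma'}$ from the known description of $R_\sigma$ in the quasi-split case via Theorem \ref{iso bw R}, then verify that the defining conditions for $I(\sigma')$ coincide with those for $I(\sigma)$. Fix $\sigma \in \Pi_{\phi}(M)$ corresponding to $\sigma'$, and write
\[
\sigma = \sigma_1 \otimes \cdots \otimes \sigma_k \otimes \tau, \qquad \sigma' = \sigma'_1 \otimes \cdots \otimes \sigma'_k \otimes \tau',
\]
so that $\sigma_i$ and $\sigma'_i$ share the common $L$-parameter $\phi_i$ of $GL_{n_i}$, and $\tau,\tau'$ share $\phi_-$. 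By Theorem \ref{iso bw R}, $R_{\sigma'} \simeq R_\sigma$, and by Goldberg's result in \cite{goldberg-class} for the split types $\bold{B_n}$ and $\bold{C_n}$, the group $R_\sigma$ is isomorphic to $\ZZ_2^{d}$ generated by the sign changes $\{C_i : i \in I(\sigma),\ \sigma_j \not\simeq \sigma_i \text{ for all } j > i\}$, where $I(\sigma)$ is the analogous index set on the quasi-split group.

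The heart of the argument is to check the equivalences $i \in I(\sigma) \iff i \in I(\sigma')$. For the self-duality condition, $\sigma_i$ and $\sigma'_i$ lie in singleton $L$-packets on $GL_{n_i}(F)$ and $GL_{m_i}(D)$ respectively, both attached to $\phi_i$; since contragredient on each side corresponds to the dual parameter $\tilde\phi_i$ (by \cite{ht01, he00} on the split side and by the Jacquet-Langlands matching of parameters on the inner form side), we obtain
\[
\sigma_i \simeq \tilde\sigma_i \iff \phi_i \simeq \tilde\phi_i \iff \sigma'_i \simeq \tilde\sigma'_i.
\]
For the reducibility condition, I apply Theorem \ref{iso bw R} a second time inside the smaller group $G(n_i + m)$ (and its inner form $G'(n_i + m)$), to the elliptic tempered parameter $\phi_i \oplus \phi_-$ of the Levi subgroup $GL_{n_i}(F) \times G_-(m)$, obtaining an isomorphism of Knapp--Stein $R$--groups between $\sigma_i \otimes \tau$ and $\sigma'_i \otimes \tau'$. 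This forces the two induced representations
\[
\ii_{G(n_i + m),\, GL(n_i) \times G_-(m)}(\sigma_i \otimes \tau) \quad \text{and} \quad \ii_{G'(n_i + m),\, GL_{m_i}(D) \times G'_-(m)}(\sigma'_i \otimes \tau')
\]
to share the same number of irreducible constituents, so reducibility transfers. Hence $I(\sigma) = I(\sigma')$ under the identification of indices, and in particular the integer $d$ is the same on both sides.

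Finally, the explicit form of the generators is immediate from the identification \eqref{identity for Weyls} together with the description in Section \ref{WM-actions on Levis}: the element $C_i$ acts on $\sigma$ by sending $\sigma_i$ to $\widetilde\sigma_i$ (using Lemma \ref{conjugation lemma} on the inner-form side), and the quotient relations $\sigma_j' \simeq \sigma_i' \iff \sigma_j \simeq \sigma_i$ match because all these self-dualities are controlled by $\phi_i \simeq \phi_j$. The main obstacle is the reducibility-transfer step of the second paragraph, which implicitly requires that the $L$-packet $\Pi_{\phi_i \oplus \phi_-}$ for the smaller Levi subgroup factors compatibly through the tensor decomposition (so that $\sigma'_i \otimes \tau'$ genuinely lies in the corresponding inner-form packet); this compatibility is part of Arthur's classification for inner forms, and so relies on Hypotheses \ref{hyp}.
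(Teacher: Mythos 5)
Your proposal is correct and is essentially the paper's own (largely implicit) argument: the corollary is obtained by combining Theorem \ref{iso bw R} with Goldberg's split-case description of $R_\sigma$, and the matching of the self-duality and rank-one reducibility conditions between $\sigma$ and $\sigma'$ that you spell out is exactly what the proofs of Lemmas \ref{identity of W} and \ref{identity of W'} supply (your second application of Theorem \ref{iso bw R} inside $G(n_i+m)$ repackages the maximal-Levi argument used there). One small caution: before the cocycle triviality of Section \ref{transfer of elliptic spectra} is established, the isomorphism of $R$--groups only gives equality of the dimensions of the commuting algebras, hence transfer of reducibility --- which is all you need --- rather than literally the same number of irreducible constituents.
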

\begin{cor} \label{r-groups for G'=d}
Let $\bM'$ be an $F$-Levi subgroup of $\bG'$ of type $\bold{D_n}$ (i.e., either $\bold{^1 D_n}$ or $\bold{^2 D_n}$) .
Let $\phi : W_F \times SL_2(\CC) \rightarrow {^L}{M}$ be an elliptic tempered $L$-parameter.
Let $\sigma' \in \Pi_{\phi}(M')$ be given. 
Denote $I_1 = \{ 1, 2, \cdots, k \}$ if $m \geq 2$ and $c_0 \tau' \s \tau',$ 
otherwise, $I_1 = \{ 1 \leq i \leq k : n_i ~~ \text{is even}  \}.$ 
Denote $I_2 = \{1, 2, \cdots, k\} - I_1.$ 
Set
\begin{align*}
I_1(\sigma') &= \{ i \in I_1 : \sigma'_i \s  \widetilde{\sigma}'_i ~~\text{and}~~ \ii_{G'(n_i + m), ~ GL(n_i)\times G'(m)} (\sigma' \otimes \tau') ~~\text{is reducible}~  \},  \\
I_2(\sigma') &= \{ i \in I_2 : \sigma'_i \s  \widetilde{\sigma}'_i \}.
\end{align*}
Let $d_j$ denote the number of inequivalent $\sigma'_i$ such that $i \in I_j(\sigma'),$ and let $d=d_1+d_2.$ If $d_2=0,$ then
\[
R_{\sigma'} \s \ZZ_2^d.
\]
If $d_2 > 0,$ then
\[
R_{\sigma'} \s \ZZ_2^{d-1}.
\]
In either case, $R_{\sigma'}$ is a subgroup of $W_{M'}$ generated by sign changes. 
\end{cor}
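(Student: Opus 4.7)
The plan is to reduce the corollary to the known description of $R_{\sigma}$ on the quasi-split form, and then check that the combinatorial data $d_1, d_2$ transfer cleanly under the $L$-packet correspondence. By Theorem \ref{iso bw R}, for any $\sigma \in \Pi_\phi(M)$ paired with $\sigma' \in \Pi_\phi(M')$, we have $R_{\sigma'} \simeq R_\sigma$ via the identification $W_M = W_{M'}$ of Weyl groups recorded in \eqref{identity for Weyls}, and this isomorphism preserves the explicit sign-change generators $C_i$ and $C_i c_0$ described in Section \ref{weyl group actions}. So the strategy is: compute $R_\sigma$ by the known formulas, and then translate the indexing sets under $\sigma \leftrightarrow \sigma'$.

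First, invoke the $D_n$-case computation of the Knapp–Stein $R$-group for the quasi-split $\bG$. For the split case $\bG = SO_{2n}$ this is given in \cite{goldberg-class}, and for the non-split quasi-split case $\bG = SO_{2n}^*$ it is handled in Appendix \ref{R-gps for SO*}. In both references $R_\sigma$ is shown to be a subgroup of $W_M$ generated by sign changes, with the precise $\mathbb{Z}_2^d$ versus $\mathbb{Z}_2^{d-1}$ dichotomy arising as follows: the even-index generators $\{C_i : i \in I_1(\sigma)\}$ contribute one $\mathbb{Z}_2$ factor per inequivalence class; the odd-index generators appear only in the combinations $C_i c_0$ or $C_i C_j$ specified in \eqref{C_2}, and the relation $(C_i c_0)(C_j c_0) = C_i C_j$ (resp. the bilinear nature of the $C_i C_j$ generators when $m=0$) removes exactly one generator from the $I_2$ part, producing the $d_2 - 1$ shift when $d_2 > 0$.

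Second, match the data for $\sigma$ with the data for $\sigma'$. Three compatibilities are needed: (i) $\sigma'_i \simeq \widetilde{\sigma}'_i$ if and only if $\sigma_i \simeq \widetilde{\sigma}_i$, which follows because both are equivalent to $\phi_i \simeq \widetilde{\phi}_i$ under the local Langlands correspondences for $GL_{n_i}(F)$ and $GL_{m_i}(D)$ of \cite{ht01, he00, hs11}; (ii) $c_0 \tau' \simeq \tau'$ if and only if $c_0 \tau \simeq \tau$, which is Remark \ref{q-split and L-packet for SO}.2 applied to the $G_-(m)$ factor, so that the two possibilities for $I_1$ are assigned identically on the two sides; (iii) the rank-one reducibility condition defining $I_1(\sigma'), I_2(\sigma')$ is equivalent to its quasi-split counterpart, via Theorem \ref{iso bw R} applied to the rank-one Levi $GL_{m_i}(D) \times G'_-(m)$ inside $G'(n_i+m)$ and its quasi-split partner: in rank one the $R$-group has order at most $2$, and the induced representation is reducible precisely when the $R$-group is nontrivial, so the equivalence of $R_{\sigma_i \otimes \tau} \simeq R_{\sigma'_i \otimes \tau'}$ gives the equivalence of the reducibility conditions.

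The main obstacle is compatibility (iii), since the definition of $I_j(\sigma')$ is built out of rank-one Plancherel-based reducibility criteria for the inner form, and to transfer them we must apply the full strength of Theorem \ref{iso bw R} at each rank-one Levi $GL_{m_i}(D) \times G'_-(m)$. This requires checking that the elliptic tempered parameter $\phi_i \oplus \phi_-$ for this smaller Levi falls within the hypotheses of Theorem \ref{iso bw R}, which it does because ellipticity and temperedness are preserved under projecting $\phi$ onto its factors. Once these three compatibilities are verified, the explicit generating set $\{C_i : i \in I_1(\sigma'), \sigma'_j \not\simeq \sigma'_i \text{ for } j > i\} \cup \{C_i c_0 \text{ or } C_i C_j : i,j \in I_2(\sigma')\}$ coincides under the Weyl-group identification with the generating set for $R_\sigma$ from \cite{goldberg-class} and Appendix \ref{R-gps for SO*}, so both the abstract isomorphism class and the explicit sign-change realization follow at once.
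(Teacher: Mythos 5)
Your proposal is correct and follows essentially the same route as the paper: the corollary is obtained by combining Theorem \ref{iso bw R} (indeed, Lemmas \ref{identity of W} and \ref{identity of W'} give $W(\sigma)=W(\sigma')$ and $\Delta'_{\sigma}=\Delta'_{\sigma'}$ under \eqref{identity for Weyls}, so $R_{\sigma}=R_{\sigma'}$ as subgroups of $W_M=W_{M'}$) with the explicit $D_n$-computations of $R_{\sigma}$ in \cite{goldberg-class} and Appendix \ref{R-gps for SO*}. The compatibilities (i)--(iii) you spell out --- matching of self-contragredience, of the $c_0\tau\s\tau$ condition, and of the rank-one reducibility criteria --- are exactly the translations the paper leaves implicit (the third being the maximal-Levi step already carried out in the proof of Lemma \ref{identity of W'}), so your write-up is a more detailed version of the same argument.
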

The rest of Section \ref{main theorem} is devoted to the proof of Theorem \ref{iso bw R}.
\begin{lm} \label{identity of W}
Let $\phi : W_F \times SL_2(\CC) \rightarrow {^L}{M}$ be an elliptic tempered $L$-parameter.
Under the identity \eqref{identity for Weyls}, we have
\[
W(\sigma) = W_{\phi} = W(\sigma').
\]
\end{lm}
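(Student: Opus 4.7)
I would identify all three groups with the stabilizer $\{w : w\phi \simeq \phi\}$ in the common Weyl group $W_M = W_{\widehat M} = W_{M'}$, exploiting the parallel explicit descriptions of Weyl-group actions in Sections \ref{WM-actions on Levis} and \ref{action on parameters}.

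First I would compute $W(\sigma)$ and $W(\sigma')$. Writing $\sigma = \sigma_1\otimes\cdots\otimes\sigma_k\otimes\tau$ and using the generators $(ij)$, $C_i$, $C_ic_0$ of $W_M \simeq S\ltimes\mathcal{C}$ from Section \ref{WM-actions on Levis}, the condition ${}^{w}\sigma\simeq\sigma$ reduces to conjunctions of statements of the form $\sigma_i\simeq\sigma_j$, $\sigma_i\simeq\widetilde\sigma_i$, and---when $c_0$ appears---$c_0\tau\simeq\tau$. By LLC for $GL_{n_i}(F)$ (Harris--Taylor, Henniart), its Jacquet--Langlands extension for $GL_{m_i}(D)$, and Arthur's classification for $G_-(m)$, each such condition on $\sigma$ is equivalent to the analogous condition on $\phi$: $\phi_i\simeq\phi_j$, $\phi_i\simeq\widetilde\phi_i$, and $c_0\phi_-\simeq\phi_-$. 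For the $c_0$-step I invoke Remark \ref{q-split and L-packet for SO}, which forces $\Pi_{\phi_-}(G_-(m))$ to be a singleton whenever $c_0\phi_-\simeq\phi_-$ and thereby makes $c_0\tau=\tau$ automatic, the converse being LLC. Comparing with the matching formulas for the $W_{\widehat M}$-action in Section \ref{action on parameters} yields $W(\sigma) = \{w\in W_M : w\phi\simeq\phi\}$. The same argument applied to $\sigma'\in\Pi_\phi(M')$---with the $c_0$-step routing through Lemma \ref{lemma for singletons} to transfer the singleton property to $\Pi_{\phi_-}(G_-'(m))$---gives $W(\sigma') = \{w\in W_{M'} : w\phi\simeq\phi\}$.

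Next I would identify $W_\phi$ with the same stabilizer. Since $T_\phi = A_{\widehat M}$ by Remark \ref{remark for T}, one has $Z_{\widehat G}(T_\phi)=\widehat M$, so the embedding $W_\phi\hookrightarrow W_{\widehat M}$ of \cite[p.~45]{art89ast} takes the form
\[
W_\phi \;=\; N_{C_\phi(\widehat G)}(A_{\widehat M})/C_\phi(\widehat M) \;\hookrightarrow\; N_{\widehat G}(A_{\widehat M})/\widehat M \;=\; W_{\widehat M}.
\]
A class $w\in W_{\widehat M}$ lies in the image iff some representative $g\in N_{\widehat G}(\widehat M)$ satisfies $mg\in C_\phi(\widehat G)$ for some $m\in\widehat M$, equivalently $g\phi g^{-1}$ is $\widehat M$-conjugate to $\phi$, i.e., $w\phi\simeq\phi$. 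Hence $W_\phi = \{w\in W_{\widehat M} : w\phi\simeq\phi\}$, and combining with the previous step and \eqref{identity for Weyls} gives $W(\sigma) = W_\phi = W(\sigma')$.

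The main difficulty is the $c_0$-translation $c_0\tau'\simeq\tau' \Leftrightarrow c_0\phi_-\simeq\phi_-$ on the inner-form factor, since no direct Arthur-style $c_0$-invariance statement for $G_-'(m)$ is yet available. I resolve this via Lemma \ref{lemma for singletons}, which transfers the singleton property of $\Pi_{\phi_-}(G_-(m))$ over to $\Pi_{\phi_-}(G_-'(m))$ and thus forces $c_0\tau'=\tau'$ automatically. This single step is the one place where the argument genuinely depends on Hypotheses \ref{hyp}(a) and (b); the rest reduces to a direct inspection of the formulas in Sections \ref{WM-actions on Levis}--\ref{action on parameters} combined with termwise LLC.
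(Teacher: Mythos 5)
Your argument is correct and follows essentially the same route as the paper: both inclusions are obtained by translating, factor by factor, the explicit $W_M$-action on $\sigma,\sigma'$ into the $W_{\widehat M}$-action on $\phi$ via the local Langlands correspondence (Henniart/Harris--Taylor for the $GL$-factors, Hiraga--Saito and Arthur's Theorems 1.5.1/9.4.1 for the inner forms), with the $c_0$-step handled exactly as in the paper through Remark \ref{q-split and L-packet for SO} and Lemma \ref{lemma for singletons}. Your explicit identification $W_\phi=\{w\in W_{\widehat M}:{}^{w}\phi\simeq\phi\}$ via $T_\phi=A_{\widehat M}$ merely spells out what the paper does implicitly by choosing representatives in $C_\phi(\widehat G)$.
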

\begin{proof}
Let $w \in W_{\phi}$ be given. Then by abuse of notation its representative $w \in C_{\phi}(\widehat{G})$ satisfies
$^w \phi \s \phi.$ 
Set $\phi$ to be $\phi_1 \oplus \phi_2 \oplus \cdots \oplus \phi_k \oplus \phi_-.$
By the action of $W_{\widehat M}$ on $L$-parameters in Section \ref{action on parameters}, there exist $i, j$ with $1 \leq i, j \leq k$ (possibly, $i=j$) such that 
\begin{itemize}
 \item[(i)] $\phi_i \s \phi_j,$ or $\phi_i \s \tilde{\phi}_j$ if $\bG=SO_{2n+1}, Sp_{2n}$
 \item[(ii)] $\phi_i \s \phi_j,$ or $\phi_i \s \tilde{\phi}_j,$ or $\phi_i \s \tilde{\phi}_i$ and $c_0 \phi_- \s \phi_-$ if $\bG=SO_{2n}, SO^*_{2n}.$
\end{itemize}
Due to the local Langlands correspondence for $GL_n$ for the case $\phi_i,$ $\phi_j$ (\cite{he00, ht01}) and for $G$ for the case $\phi_-$ (\cite[Theorems 1.5.1]{art12} and Remark \ref{q-split and L-packet for SO}.2), we have $i, j$ with $1 \leq i, j \leq k$ (possibly, $i=j$) such that 
\begin{itemize}
 \item[(a)] $\sigma_i \s \sigma_j$ or $\sigma_i \s \widetilde{\sigma}_j$ if $\bG=SO_{2n+1}, Sp_{2n}$ (see \cite[Section 4]{goldberg-class}, \cite[Section 2]{herb93})
 \item[(b)] $\sigma_i \s \sigma_j,$ or $\sigma_i \s \widetilde{\sigma}_j,$ or $\sigma_i \s \tilde{\sigma}_i$ and $c_0 \sigma_- \s \sigma_-$ if $\bG=SO_{2n}, SO^*_{2n}$ (see \cite[Section 3]{herb93}, Appendix \ref{R-gps for SO*}).
\end{itemize}
By the action of $W_{M}$ on $\sigma = \sigma_1 \otimes \sigma_2 \otimes \cdots \otimes \sigma_k \otimes \tau$ in Section \ref{action on parameters}, we have 
\[
{^w}\sigma \s \sigma,
\]
which implies $w \in W(\sigma).$
Likewise, by the action of $W_{M'}$ on $\sigma'$ in Section \ref{WM-actions on Levis} we have 
\[
{^w}\sigma' \s \sigma',
\]
which implies $w \in W(\sigma').$ Note that, for the local Langlands correspondence of a non-split $F$-inner form $M',$ we apply \cite{hs11} and \cite[Theorems 1.5.1 and 9.4.1]{art12}. 

Let $w \in W(\sigma)$ be given. It then follows from the action of $W_{M}$ on $\sigma$ in Section \ref{WM-actions on Levis} that there exist $i, j$ with $1 \leq i, j \leq k$ (possibly, $i=j$) satisfying (a) and (b) above. 
Thus, by the local Langlands correspondence, we have (i) and (ii) above. The action of $W_{\widehat M}$ on $L$-parameters in Section \ref{action on parameters} then yields 
\[
{^w}\phi \s \phi,
\]
which implies $w \in C_{\phi}(\widehat G).$ Therefore, we have proved Lemma \ref{identity of W}.
\end{proof}
\begin{lm} \label{identity of W'}
Let $\phi : W_F \times SL_2(\CC) \rightarrow {^L}{M}$ be an elliptic tempered $L$-parameter.
Under the identity \eqref{identity for Weyls}, we then have
\[
W'(\sigma) = W^{\circ}_{\phi} = W'(\sigma').
\]
\end{lm}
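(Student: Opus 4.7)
Both $W'(\sigma)$ and $W^{\circ}_{\phi}$ are reflection subgroups of the common Weyl group $W_M = W_{\widehat M} = W_{M'}$ via \eqref{identity for Weyls}. The strategy is to match their generating reflections through a common parameter-side description, and then repeat the argument verbatim on the inner form.

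First I would use the explicit Plancherel-measure computations in \cite{goldberg-class} and Appendix \ref{R-gps for SO*} to describe $\Delta'_{\sigma}$: a reflection $s_\alpha$ generates $W'(\sigma)$ iff either $\alpha$ is a transposition-type root $\alpha_{ij}$ with $\sigma_i \simeq \sigma_j$, or $\alpha$ is a sign-change root (of the form $C_i$, $C_iC_j$, or $C_ic_0$ depending on the Satake diagram, cf.\ Section \ref{structure} and Remark \ref{remark on c_0}) for which $\sigma_i \simeq \widetilde\sigma_i$ and the corresponding rank-one induced representation $\ii_{G_-(n_i+m),\, GL_{n_i}(F)\times G_-(m)}(\sigma_i \otimes \tau)$ reduces. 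The local Langlands correspondence --- for $GL_{n_i}$ from \cite{he00, ht01} and for $G_-(m)$ from \cite[Theorem 1.5.1]{art12}, under Hypotheses \ref{hyp} --- translates these conditions into parameter-side statements: equivalence or self-duality among the $\phi_i$, together with a parity condition (orthogonal vs.\ symplectic) matching $\phi_i$ into the ambient classical dual group $\widehat G_-$.

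Next, on the dual side, decompose $\phi = \phi_1 \oplus \cdots \oplus \phi_k \oplus \phi_-$. Since $\widehat G$ is classical, $C_{\phi}(\widehat G)^{\circ}$ is a product of $GL$-factors (indexed by conjugate-dual pairs of blocks not absorbed into $\phi_-$) and classical factors of type $O$ or $Sp$ (indexed by self-dual blocks whose parity matches that of $\widehat G_-$). Its Weyl group $W^{\circ}_{\phi}$ is generated precisely by the transpositions, sign changes, and (for $D$-type) diagram involutions permitted by exactly the same conditions as above. Hence $W^{\circ}_{\phi}$ and $W'(\sigma)$ have identical generating reflections inside $W_M$, and so are equal. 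One containment, $W^{\circ}_{\phi} \subseteq W'(\sigma)$, is also independently provided by Arthur's argument \eqref{useful arthur's arg} cited in Remark \ref{q-split and L-packet for SO}, which serves as a consistency check on the matching.

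Applying the identical argument with $\sigma$ replaced by $\sigma'$ --- using local Langlands for $M'$ as supplied by \cite{hs11} and \cite[Theorem 9.4.1]{art12} under Hypotheses \ref{hyp}.b --- gives $W'(\sigma') = W^{\circ}_{\phi}$, so $W'(\sigma) = W^{\circ}_{\phi} = W'(\sigma')$. The main obstacle is the inner-form half: one must know that rank-one reducibility for $G'_-(n_i+m)$ induced from $GL_{m_i}(D) \times G'_-(m)$ is controlled by the $L$-parameter $\phi$ through the \emph{same} parity recipe as on the quasi-split side, so that $\Delta'_{\sigma'}$ and $\Delta'_{\sigma}$ are read off $\phi$ identically. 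This is built into the endoscopic classification for inner forms (Hypotheses \ref{hyp}.b), but verifying it cleanly requires case-by-case inspection of the Satake diagrams of Section \ref{structure}, with special care for the outer involution $c_0$ in the $\bold{^1 D_n}$ and $\bold{^2 D_n}$ cases and for the non-split factors $SU^{\pm}_{2k}(D_2)$.
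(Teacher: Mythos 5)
Your plan has the right flavor (both sides are reflection subgroups of the common Weyl group, so one wants to match rank-one data), but there are two problems, one of which is the actual crux of the lemma. First, your description of the generators of $W'(\sigma)$ is inverted: $s_\alpha \in W'(\sigma)$ means $\mu_\alpha(\sigma)=0$, which for a sign-change root is the condition that $\sigma_i \simeq \widetilde\sigma_i$ and the rank-one induced representation $\ii_{G_-(n_i+m),\,GL_{n_i}\times G_-(m)}(\sigma_i\otimes\tau)$ is \emph{irreducible}; reducibility of that rank-one induced representation is precisely the condition for the sign change to lie in $R_\sigma$ (cf.\ the sets $I(\sigma')$, $I_j(\sigma')$ in Corollaries \ref{r-groups for G'=bc} and \ref{r-groups for G'=d}), not in $W'_\sigma$. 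Since your entire argument is a generator-by-generator match, this must be corrected throughout, on both the group side and the dual side.

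Second, and more seriously, the central step is asserted rather than proved: the claim that the reflections of $W^\circ_\phi$ are ``permitted by exactly the same conditions'' as those defining $\Delta'_\sigma$ and $\Delta'_{\sigma'}$ is exactly the statement of the lemma in the rank-one case, and it does not follow formally from the local Langlands correspondence for the factors; you acknowledge this for the inner form and defer it to unspecified case-by-case inspection of parity recipes, which is precisely the gap. The paper closes this step without any explicit reducibility recipe: for a maximal Levi it runs a counting argument, using the bijections $\Pi_\phi(\cdot) \leftrightarrow \Irr(S_{\phi,\scn}(\cdot),\zeta)$ together with the exact sequences \eqref{useful exact} and \eqref{useful exact 2}, to get that $R_\phi=1$ iff $\#\Pi_\phi(G)=\#\Pi_\phi(M)$ iff $\ii_{G,M}(\sigma)$ is irreducible, and \emph{simultaneously} that $\#\Pi_\phi(G')=\#\Pi_\phi(M')$ iff $\ii_{G',M'}(\sigma')$ is irreducible; this yields $\mu_\alpha(\sigma)=0 \iff w_\alpha\in W^\circ_\phi \iff \mu_\alpha(\sigma')=0$ with no parity analysis. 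The general Levi is then handled by reducing to this rank-one statement inside $\bM_\alpha = Z_\bG(A_\alpha)$ for the containment \eqref{containment 1}, and by combining Arthur's inclusion \eqref{useful arthur's arg} with the resulting equality $\Delta'_\sigma=\Delta'_{\sigma'}$ for \eqref{containment 2}. Unless you supply an argument of this kind (or carry out in full the parameter-side reducibility dictionary for every inner form in Section \ref{structure}, including the $SU^{\pm}_{2k}(D_2)$ factors and the $c_0$-twists), your proposal assumes the very equivalence it is supposed to establish.
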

\begin{proof}
Let us start with the case that $M$ is maximal.
Since $M$ is maximal, we have $W_M \s \ZZ/2\ZZ.$ Let $w \in W^{\circ}_{\phi}$ be given. 
If $w$  is a trivial element, it is clear that $w$ sits in $W' (\sigma)$ as well as $W' (\sigma')$. Suppose that $w$ is a non-trivial element. 
We then have $R_{\phi} = 1.$ 
From \eqref{useful exact} and \eqref{useful exact 2}, we have 
\[
S_{\phi}(\widehat G) = S_{\phi}(\widehat M) ~ ~ ~ \text{and} ~ ~ ~ S_{\phi, \scn}(\widehat G) = S_{\phi, \scn}(\widehat M).
\]
Thus, we have equalities of the sizes of $L$-packets
\[
\# \Pi_{\phi}(G)= \# \Pi_{\phi}(M)  ~ ~ ~ \text{and} ~ ~ ~ \# \Pi_{\phi}(G')= \# \Pi_{\phi}(M'),
\]
which imply that both induced representations $\ii_{G,M}(\sigma)$  and $\ii_{G',M'}(\sigma')$ are irreducible for all $\sigma \in \Pi_{\phi}(M)$ and $\sigma' \in \Pi_{\phi}(M'),$ respectively. Hence, we have $R_{\sigma}=R_{\sigma'}=1.$ So, $w$ must be in $W'(\sigma)$ and $W'(\sigma').$ 

Let $w \in W'(\sigma)$ be given. If $w$ is a trivial element, it is clear that $w$ sits in $W^{\circ}_{\phi}.$ Suppose $w$ is a non-trivial element. We then have $R_{\sigma} = 1.$ Thus, all induced representations $\ii_{G,M}(\sigma),~ \sigma \in \Pi_{\phi}(M),$ are irreducible, 
which implies $\# \Pi_{\phi}(G)= \# \Pi_{\phi}(M)$ and thus $R_{\phi}=1$ by \eqref{useful exact}. 
So, $w$ must be in $W^{\circ}_{\phi}.$ The same argument applies to $W'(\sigma').$ Therefore, we have proved Lemma \ref{identity of W'} for the maximal case $M$ and $M'.$

We return to an arbitrary $F$-Levi subgroup. We shall complete the proof by verifying two containments
\begin{equation} \label{containment 1}
W'(\sigma) \subset W^{\circ}_{\phi} 
\quad (W'(\sigma') \subset W^{\circ}_{\phi})
\end{equation}
and 
\begin{equation} \label{containment 2}
W'(\sigma) \supset W^{\circ}_{\phi} 
\quad (W'(\sigma') \supset W^{\circ}_{\phi}).
\end{equation}
To see \eqref{containment 1}, we let $w \in W'(\sigma)$ such that $w$ equals the element $w_\alpha$ representing the reflection with respect to some $\alpha \in \Phi(P,A_M).$ 
By definition, we have $\mu_{\alpha}(\sigma)=0.$   
We let $A_{\alpha}:=(\ker \alpha \cap A_M)^{\circ}$ be the identity component of $(\ker \alpha \cap A_M).$ 
Set $\bM_{\alpha}:=Z_{\bG}(A_{\alpha}),$ then $\bM_{\alpha}$ contains $\bM$ as a maximal $F$-Levi subgroup. From Remark \ref{remark for T}, we have
\[
T_{\phi} \subset \widehat M \subset \widehat M_{\alpha}.
\]
Set 
\begin{equation} \label{W alpha}
W_{\phi, \alpha}^{\circ} := N_{C_{\phi}(\widehat M_{\alpha})^{\circ}}(T_{\phi}) / Z_{C_{\phi}(\widehat M_{\alpha})^{\circ}}(T_{\phi}).
\end{equation}
By the above argument for the maximal case, we have 
\[
w \in W_{\phi, \, \alpha}^{\circ}.
\]
It follows that 
\[
w \in N_{C_{\phi}(\widehat M_{\alpha})^{\circ}}(T_{\phi}) \subset N_{C_{\phi}(\widehat G)^{\circ}}(T_{\phi})
\]
since  
\[
C_{\phi}(\widehat M_{\alpha})^{\circ} \subset C_{\phi}(\widehat G)^{\circ}.
\]
Thus, we must have $w \in W_{\phi}^{\circ}.$ The same argument applies to $W'(\sigma').$ 
By the definitions of $W'(\sigma)$ and $W'(\sigma'),$ we have proved the containment \eqref{containment 1}.

To see \eqref{containment 2}, we recall the argument \eqref{useful arthur's arg}. So, it remains to show that 
\begin{equation} \label{remains to show}
W'(\sigma) = W'(\sigma').
\end{equation}
We note that $\Phi(P, A_M)=\Phi(P', A_{M'})$ since $\bM$ and $\bM'$ are inner forms each other and their $F$-points are respectively of the form \eqref{form of M} and \eqref{form of M'}. 
For given $\alpha \in \Phi(P, A_M)=\Phi(P', A_{M'}),$ 
by the argument for the maximal case, we have 
\[
\mu_{\alpha} (\sigma) = 0 \quad \text{if and only if} \quad \mu_{\alpha} (\sigma') = 0
\]
since these Plancherel measures attached to $\alpha$ are those for the induced representations from maximal $F$-Levi subgroups.
By the definitions of $\Delta'_\sigma
$ and $\Delta'_{\sigma'}$ in Section \ref{section for def of R}, we then have 
\[
\Delta'_\sigma  = \Delta'_{\sigma'},
\]
which implies the containment \eqref{remains to show}.
Thus, we have proved the containment \eqref{containment 2}. This completes the proof of Lemma \ref{identity of W'}.
\end{proof} 
\begin{proof} [Proof of Theorem \ref{iso bw R}]
From Lemmas \ref{identity of W} and \ref{identity of W'}, we have 
\[
R_{\sigma} \s R_{\phi} \s R_{\sigma'}.
\]
Therefore, the definition \eqref{def of W_phi, sigma} yields that 
\begin{equation} \label{iso for max}
R_{\phi, \sigma} \s R_{\phi} \s R_{\phi, \sigma'}.
\end{equation}
\end{proof}
\section{Transfer of elliptic spectra} \label{transfer of elliptic spectra}
In this section, we show that the elliptic spectrum of $\bG=SO_{2n+1},$ $Sp_{2n},$ $SO_{2n},$ $SO^*_{2n}$ is identically transferred to its $F$--inner form $\bG'.$
An elliptic representation of $G$ is the one whose Harish-Chandra character (see \cite{hc81}) does not vanish on the elliptic regular set of $G,$ cf. \cite{art93}.
The elliptic spectrum is studied for $SO_{2n+1},$ $Sp_{2n},$ and $SO_{2n}$ by Herb \cite{herb93}, 
and for certain $F$-inner forms of $Sp_{4n}$ and $SO_{4n}$ by Hanzer \cite{han04}, 
in which both rely on the second named author's work in \cite{goldberg-class}. 
We start with the following result of Arthur for a general connected reductive group.

\begin{thm}(Arthur, \cite[Section 2.1]{art93}) \label{ell spec by arthur}
Let $\bG$ be a connected reductive group over $F$ and let $\bM$ be an $F$-Levi subgroup of $\bG.$ Let $\sigma \in \Pi_{\disc}(M)$ be given. 
Suppose that $R_{\sigma}$ is abelian and that $C(\sigma) \s \CC[R_{\sigma}].$ 
Then $\ii_{G,M}(\sigma)$ has an elliptic constituent if and only if all constituents of $\ii_{G,M}(\sigma)$ are elliptic, and  if and only if there is $w \in R$ such that $\mathfrak{a}_{w} = \mathfrak{z}.$ 
Here $\ma_{w} := \{ H \in \ma_M : wH=H \},$ and $\mathfrak{z}$ denotes the real Lie algebra of the split component $A_{\bG}$ of $\bG.$
\end{thm}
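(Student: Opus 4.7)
The plan is to follow Arthur's elliptic inner product framework. Since $R_{\sigma}$ is abelian and $C(\sigma) \simeq \CC[R_{\sigma}]$, the induced representation decomposes as $\ii_{G,M}(\sigma) = \bigoplus_{\chi \in \widehat{R}_{\sigma}} \pi_{\chi}$, with each $\pi_{\chi}$ irreducible and occurring with multiplicity one. The character of the constituent $\pi_{\chi}$ admits the standard expansion
\[
\Theta_{\pi_{\chi}} \;=\; \frac{1}{|R_{\sigma}|} \sum_{w \in R_{\sigma}} \chi(w)^{-1}\, \Theta(\sigma, w),
\]
where $\Theta(\sigma, w)$ denotes the twisted character of the pair $(\ii_{G,M}(\sigma), A(w,\sigma))$, built out of the normalized intertwining operator attached to $w$. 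This identity is the representation-theoretic translation of $C(\sigma) \simeq \CC[R_{\sigma}]$.

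Next I would introduce the elliptic inner product $\langle \,\cdot\,,\,\cdot\,\rangle_{\el}$ on virtual tempered characters, obtained by integrating against the elliptic regular set with the appropriate Weyl measure (cf.\ \cite{art93}). By definition, $\pi_{\chi}$ is elliptic if and only if $\langle \Theta_{\pi_{\chi}}, \Theta_{\pi_{\chi}} \rangle_{\el} \neq 0$. Substituting the expansion above and using the orthogonality of the twisted characters, this inner product reduces to
\[
\langle \Theta_{\pi_{\chi}}, \Theta_{\pi_{\chi}} \rangle_{\el} \;=\; \frac{1}{|R_{\sigma}|^{2}} \sum_{w \in R_{\sigma}} |\chi(w)|^{2}\, \langle \Theta(\sigma,w), \Theta(\sigma,w) \rangle_{\el}.
\]
Since $|\chi(w)|^{2} = 1$ for every character of the finite abelian group $R_{\sigma}$, the right-hand side is nonzero if and only if at least one term $\langle \Theta(\sigma,w), \Theta(\sigma,w) \rangle_{\el}$ is nonzero, and crucially the set of such $w$'s is independent of $\chi$. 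This is the mechanism that forces the dichotomy: either every constituent is elliptic or none is.

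The remaining and most delicate step is to identify exactly when $\Theta(\sigma,w)$ has nonzero elliptic pairing. Here one invokes the twisted analogue of the Weyl integration formula together with van Dijk-type vanishing: a twisted induced character $\Theta(\sigma,w)$ is supported on elements conjugate into $M$, and after the $w$-twist its contribution to the elliptic set is controlled by the fixed-point torus $\ma_{w} = \{H \in \ma_{M} : wH = H\}$. The twisted character fails to vanish on $G_{\el}$ precisely when $w$ leaves no nontrivial split direction transverse to the center, i.e.\ $\ma_{w} = \mathfrak{z}$. Combining this with the equivalence above yields both equivalences in the theorem. The principal obstacle is this last point, namely the geometric analysis of the twisted character on the elliptic set; once the vanishing criterion $\ma_{w} = \mathfrak{z}$ is established, the rest is formal manipulation of the decomposition of $\ii_{G,M}(\sigma)$.
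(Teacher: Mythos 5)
The paper offers no proof of this statement at all---it is quoted directly from Arthur \cite[Section 2.1]{art93}---and your sketch is essentially Arthur's own argument: decompose $\ii_{G,M}(\sigma)=\bigoplus_{\chi}\pi_{\chi}$ using $C(\sigma)\s\CC[R_{\sigma}]$, expand $\Theta_{\pi_{\chi}}$ in the twisted characters $\Theta(\sigma,w)$, and test ellipticity with the elliptic inner product, where $|\chi(w)|=1$ (abelianness) gives the all-or-nothing dichotomy and the criterion $\ma_{w}=\mathfrak{z}$ decides nonvanishing. The two genuinely hard inputs---the orthogonality/nonnegativity of the $\Theta(\sigma,w)$ under the elliptic pairing and the vanishing criterion on the elliptic set---are exactly the content of \cite{art93}, which you correctly flag as the crux but, like the paper, ultimately defer to Arthur rather than prove.
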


We recall Herb's results on the elliptic spectrum for $\bG=SO_{2n+1},$ $Sp_{2n},$ $SO_{2n}$ as follows.
\begin{pro}(Herb, \cite[Proposition 2.3 and 3.3]{herb93}) \label{split 2-cocycle by herb}
Let $\bG$ be $SO_{2n+1},$ $Sp_{2n}$ or $SO_{2n}$ over $F$ and let $\bM$ be an $F$-Levi subgroup of $\bG.$ Let $\sigma \in \Pi_{\disc}(M)$ be given. Then 
\[
C(\sigma) \s \CC[R_{\sigma}].
\]
\end{pro}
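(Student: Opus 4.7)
The plan is to show that the Knapp-Stein $2$-cocycle vanishes. By Theorem \ref{thm for Knapp-Stein-Sil}, we know a priori that $C(\sigma) \simeq \CC[R_\sigma]_\eta$ for some $2$-cocycle $\eta \in H^2(R_\sigma, \CC^\times)$, so the content of the proposition is that $\eta$ is cohomologically trivial. The approach is to construct a normalized family of self-intertwining operators $\{A(w)\}_{w \in R_\sigma}$ for $\ii_{G,M}(\sigma)$ satisfying $A(w_1)A(w_2) = A(w_1 w_2)$ on the nose.

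First I would invoke the structural description of $R_\sigma$ from \cite{goldberg-class} (see also the formulas in Section \ref{weyl group actions} for how $W_M$ acts via sign changes $C_i$, permutations $(ij)$, and, in type $D_n$, the outer automorphism $c_0$). The conclusion is that $R_\sigma$ is an elementary abelian $2$-group generated by certain sign-change elements $\{C_i\}$ (with possible modifications $C_i c_0$ in the type $D_n$ setting). So the cocycle problem reduces to verifying two relations among normalized intertwining operators:
\begin{enumerate}
\item[(i)] $A(C_i)^2 = I$ for each generator;
\item[(ii)] $A(C_i)A(C_j) = A(C_j)A(C_i)$ for each pair of distinct generators.
\end{enumerate}

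For (i), I would use a rank-one reduction. Each generator $C_i$ arises from a reflection in a single root $\alpha_i$, and the associated Plancherel measure $\mu_{\alpha_i}(\sigma)$ vanishes (by membership in $R_\sigma$). Working inside the Levi of type $GL_{n_i} \times G_-(n_i)$, one normalizes the rank-one intertwining operator so that its square is the identity; this is the standard Harish-Chandra/Shahidi normalization available for discrete series. For (ii), the key observation is that distinct generators $C_i$ and $C_j$ affect disjoint blocks of the Levi data (i.e.\ the $i$-th and $j$-th $GL$-factors). Thus $A(C_i)$ and $A(C_j)$ can be realized, up to a global normalizing scalar, as intertwining operators on independent tensor factors of the inducing representation, from which commutativity is immediate.

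The main obstacle is the type $D_n$ case, where the generators may take the form $C_i c_0$, mixing a sign change with the outer automorphism $c_0$ of the Dynkin diagram. Here one must show that the $c_0$-twisted self-intertwining operator on $\tau$ (available because $c_0\tau \simeq \tau$ when such a generator appears, cf.\ \cite[Prop.~3.3]{herb93}) can be chosen so that its square is trivial and that it commutes with the rank-one operators on the $GL$-blocks. This is exactly the delicate point handled by Herb in the even-orthogonal setting, and my plan is to follow her block-wise argument, using the explicit description of $\sC_2$ in \eqref{C_2} to see that no $C_iC_j$ pair (which would force a nontrivial cocycle via a pair of odd blocks) can occur when $m > 0$, while for $m=0$ the pairs $C_iC_j$ act diagonally on two odd blocks and a direct rank-one computation settles commutativity.
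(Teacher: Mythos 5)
Your outline is essentially the argument the paper relies on: the paper does not reprove this proposition but simply cites \cite[Propositions 2.3 and 3.3]{herb93}, and its own Propositions \ref{split 2-cocycle for G' bc} and \ref{split 2-cocycle for G' d} for the inner forms reproduce exactly the block-wise construction you describe --- operators $T_i$ on the factors $V_i$ normalized so $T_i^2=1$, extended trivially to the remaining factors, together with the $c_0$-operator $T_-$ on the space of $\tau$ (or the products $T_iT_j$ when no such $\tau$-operator is available), the resulting assignment on generators giving a homomorphism of $\CC[R_\sigma]$ into the commuting algebra and hence splitting the cocycle of Theorem \ref{thm for Knapp-Stein-Sil}.

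One assertion in your step (i) is backwards and should be corrected: membership of the sign change $C_i$ in $R_\sigma$ does \emph{not} give $\mu_{\alpha_i}(\sigma)=0$. If $\mu_{\alpha_i}(\sigma)=0$ then $\alpha_i\in\Delta'_\sigma$ and $C_i\alpha_i<0$, so the reflection lies in $W'_\sigma$ and is excluded from $R_\sigma$; the generators of $R_\sigma$ correspond to roots where the rank-one Plancherel measure is nonzero, equivalently where the rank-one induced representation $\ii_{G(n_i+m),\, GL_{n_i}\times G(m)}(\sigma_i\otimes\tau)$ is reducible (this is precisely the reducibility condition in the definition of $I(\sigma')$ in Corollary \ref{r-groups for G'=bc}). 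This slip does not sink the construction, because the normalization $A(C_i)^2=I$ is not obtained from vanishing of $\mu$ anyway: one takes $T_i$ intertwining the irreducible $\sigma_i$ with $\widetilde{\sigma}_i\simeq\sigma_i$, notes $T_i^2$ is a nonzero scalar by Schur's lemma, and rescales --- exactly as Herb does and as the paper does in Proposition \ref{split 2-cocycle for G' bc}. With that justification substituted, the remainder of your plan (commutativity from disjoint blocks, multiplicativity of the normalized standard operators, and the $C_ic_0$ and $C_iC_j$ cases in type $\bold{D_n}$ treated as in \cite[Proposition 3.3]{herb93}) is the same proof.
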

The result that $R_{\sigma}$ is abelian for $\bG$ \cite{goldberg-class} is combined with Theorem \ref{ell spec by arthur} and Proposition \ref{split 2-cocycle by herb} to give the elliptic spectrum for $\bG$ as follows.
\begin{thm} (Herb, \cite[Theorem 2.5]{herb93})
Let $\bG$ be $SO_{2n+1}$ or $Sp_{2n}$ over $F$ and let $\bM$ be an $F$-Levi subgroup of $\bG$ such that $M \s GL_{n_1}(F) \times GL_{n_2}(F) \times \cdots \times GL_{n_k}(F) \times G_{-}(m).$ Let $\sigma \in \Pi_{\disc}(M)$ be given. Then Then $\ii_{G,M}(\sigma)$ has elliptic constituent if and only if all constituents of $\ii_{G,M}(\sigma)$ are elliptic if and only if 
\[
R_{\sigma} \s \ZZ_2^k.
\]
\end{thm}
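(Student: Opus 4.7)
The plan is to apply Arthur's elliptic criterion (Theorem~\ref{ell spec by arthur}) and reduce to a combinatorial statement about $R_\sigma$. Its hypotheses hold in our setting: $R_\sigma$ is abelian by Goldberg \cite{goldberg-class}, and $C(\sigma)\simeq \CC[R_\sigma]$ by Proposition~\ref{split 2-cocycle by herb} (Herb). So $\ii_{G,M}(\sigma)$ has an elliptic constituent if and only if every constituent is elliptic, if and only if some $w\in R_\sigma$ satisfies $\ma_w=\mathfrak z$. Since $\bG=SO_{2n+1}$ or $Sp_{2n}$ is semisimple, $A_\bG=1$ and $\mathfrak z=0$; the criterion becomes the existence of some $w\in R_\sigma$ acting on $\ma_M\simeq \RR^k$ with trivial fixed subspace.

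I would then identify $\ma_M\simeq \RR^k$ through the characters $\det_{n_i}$ on the $GL_{n_i}$--factors, under which the Weyl group $W_M\hookrightarrow S_k\ltimes\ZZ_2^k$ acts by permutations and coordinate sign-changes (Section~\ref{WM-actions on Levis}). Applying Corollary~\ref{r-groups for G'=bc} to $\bG'=\bG$ (which recovers the description of \cite{goldberg-class}) gives $R_\sigma\simeq \ZZ_2^d$, generated by one sign change $C_i$ for each equivalence class of $\sigma_i$ lying in $I(\sigma)$. Thus $R_\sigma$ acts on $\ma_M$ as a rank-$d$ sign-change subgroup, and its common fixed subspace $\ma_M^{R_\sigma}$ has dimension $k-d$. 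When $d<k$, every $\ma_w$ contains this nonzero common subspace, so no element can satisfy $\ma_w=0$; when $d=k$, the long element $C_1C_2\cdots C_k\in R_\sigma$ acts as $-\mathrm{id}$ and gives $\ma_w=0$. Combining, an elliptic constituent exists if and only if $d=k$ if and only if $R_\sigma\simeq \ZZ_2^k$, proving the claim.

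The main obstacle is the compatibility between the abstract presentation $R_\sigma\simeq \ZZ_2^d$ and its concrete action on $\ma_M$: when $\sigma_i\simeq \sigma_j$ with $i<j$, the swap $(ij)$ lies in $W'_\sigma$, so $C_i$ and $C_j$ project to the same element of $R_\sigma$, even though as elements of $W_M$ they negate different coordinates. One must confirm that the $d$ chosen generators from Corollary~\ref{r-groups for G'=bc} really do act by negating $d$ distinct coordinates, so that $\dim \ma_M^{R_\sigma}=k-d$. Once this bookkeeping is verified, the argument closes cleanly through Arthur's criterion and the rank-counting sketch above.
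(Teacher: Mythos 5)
Your argument is correct and is essentially the route the paper (following Herb) has in mind: Arthur's criterion applies because $R_\sigma$ is abelian and the cocycle is trivial, and since $A_\bG$ is trivial the criterion reduces to finding $w\in R_\sigma$ acting on $\ma_M\simeq\RR^k$ without nonzero fixed vectors, which happens exactly when the $d$ sign-change generators exhaust all $k$ blocks, i.e.\ $R_\sigma\simeq\ZZ_2^k$. The bookkeeping worry you raise is harmless, since the Knapp--Stein $R_\sigma$ is a concrete subgroup of $W_M$ whose generators in Goldberg's description are sign changes in $d$ distinct blocks, so $\dim\ma_M^{R_\sigma}=k-d$ as you claim.
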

\begin{thm} (Herb, \cite[Theorem 3.5]{herb93})
Let $\bG$ be $SO_{2n}$ over $F$ and let $\bM$ be an $F$-Levi subgroup of $\bG$  of the form \eqref{form of M}.
Let $\sigma \in \Pi_{\disc}(M)$ be given. Then Then $\ii_{G,M}(\sigma)$ has elliptic constituent if and only if all constituents of $\ii_{G,M}(\sigma)$ are elliptic if and only if either
\[
R_{\sigma} \s \ZZ_2^k ~~ \text{or} ~~ R_{\sigma} \s (\ZZ_2)^{k-1} ~\text{with}~d_2~\text{even}.
\]
Here $d_2$ is defined unless $m\leq 1$ and $c_0 \tau \s \tau,$ in which case $d_2$ denotes the number of inequivalent $\sigma_i$ such that $\sigma_i \s \widetilde{\sigma}_i$ and $n_i$ is odd.
\end{thm}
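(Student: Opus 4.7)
The plan is to deduce this statement directly from Arthur's general criterion (Theorem~4.1) once its hypotheses are verified for $SO_{2n}$. First I would invoke the results from \cite{goldberg-class} that show $R_\sigma$ is an abelian group generated by sign changes, together with Proposition~4.2 (Herb's cocycle triviality) which gives $C(\sigma)\simeq \CC[R_\sigma]$. With both hypotheses in hand, Theorem~4.1 reduces the existence of an elliptic constituent of $\ii_{G,M}(\sigma)$ to the existence of an element $w\in R_\sigma$ such that $\ma_w=\mathfrak{z}$.

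Next I would compute the relevant spaces. For $\bG=SO_{2n}$, the split component $A_\bG$ is trivial, so $\mathfrak{z}=0$; meanwhile $\ma_M\simeq \RR^k$, indexed by the $GL_{n_i}$ factors of $M$, with $W_M$ acting as a signed permutation group of type $D$. Since $R_\sigma$ is generated by sign changes, every $w\in R_\sigma$ fixes the permutation indices, so $\ma_w=0$ forces $w$ to act by $-I$ on $\RR^k$, i.e. $w=C_1C_2\cdots C_k$. Thus the criterion becomes: \emph{the total sign change $\prod_{i=1}^kC_i$ lies in $R_\sigma$.} At this point, the ``if and only if all constituents are elliptic'' half of the statement is immediate from Theorem~4.1.

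Now I would carry out the case analysis using the description of $R_\sigma$ in the $SO_{2n}$ analogue of Corollary~3.8. When $R_\sigma\simeq \ZZ_2^k$, every $C_i$ (or an appropriate equivalent sign change combined with $c_0$ if $m>0$) is realized as a generator of $R_\sigma$, so $\prod C_i\in R_\sigma$ automatically and the induced representation has elliptic constituents. When $R_\sigma\simeq \ZZ_2^{k-1}$, the drop in rank comes from the type-$D$ parity constraint on $\sC_2$ described in \eqref{C_2}: sign changes on indices $i\in I_2$ (odd $n_i$, no $c_0$-fix available) can only enter $R_\sigma$ in pairs $C_iC_j$. Hence the total sign change $\prod_{i=1}^k C_i$ sits in $R_\sigma$ precisely when the number of factors coming from $I_2$-indices is even, which is exactly the condition that $d_2$ is even.

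The main obstacle is the last step: carefully bookkeeping how the generators of $R_\sigma$, viewed inside the signed-permutation realization of $W_M$, actually produce $\prod_i C_i$ only under the parity constraint on $d_2$. This requires a careful case split according to whether $m=0$, whether $m\geq 2$ with $c_0\tau\simeq \tau$, and which $n_i$ are even versus odd, using the decomposition $\sC=\sC_1\times\sC_2$ and Remark~3.10 on the structure of $\sC_2$. Once this bookkeeping is done, the stated dichotomy ``$R_\sigma\simeq\ZZ_2^k$, or $R_\sigma\simeq\ZZ_2^{k-1}$ with $d_2$ even'' emerges as exactly the necessary and sufficient condition for $\prod_iC_i\in R_\sigma$, completing the proof via Arthur's criterion.
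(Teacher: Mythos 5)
Your proposal is correct and takes essentially the same route as the paper, which obtains this statement (quoting Herb) precisely by combining Goldberg's description of $R_\sigma$ as an abelian group of sign changes, the triviality of the cocycle ($C(\sigma)\simeq\CC[R_\sigma]$), and Arthur's ellipticity criterion, the substance then being exactly your reduction to ``the total sign change lies in $R_\sigma$'' and the parity analysis of $d_2$ coming from the type-$D$ constraint on $\sC_2$. The only point to watch is the small Levi cases covered by the clause ``$m\leq 1$'': for $m=1$ one has $\ma_M\simeq\RR^{k+1}$ rather than $\RR^k$, so the fixed-point condition must also account for the $SO_2$ coordinate (via $c_0$), which is part of the case bookkeeping you already flag.
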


Applying our results in Section \ref{main theorem} and the above arguments, we will present the elliptic spectrum for $\bG'.$ We will first deal with the case of an $F$--inner form $\bG'$ of $SO_{2n+1}$ or $Sp_{2n}.$
\begin{pro} \label{split 2-cocycle for G' bc}
Let $\bG'$ be an $F$--inner form of $SO_{2n+1}$ or $Sp_{2n},$ and let $\bM'$ be an $F$-Levi subgroup of $\bG'.$ Let $\sigma' \in \Pi_{\disc}(M')$ be given. Then 
\[
C(\sigma') \s \CC[R_{\sigma'}].
\]
\end{pro}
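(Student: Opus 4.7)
The plan is to adapt Herb's argument for the quasi-split case \cite[Proposition 2.3]{herb93} to the inner form $\bG'$. By the Knapp-Stein theorem (Theorem \ref{thm for Knapp-Stein-Sil}) it suffices to show that the $2$-cocycle $\eta'$ appearing in $C(\sigma') \simeq \CC[R_{\sigma'}]_{\eta'}$ is cohomologically trivial. Theorem \ref{iso bw R} together with Corollary \ref{r-groups for G'=bc} gives $R_{\sigma'} \simeq \ZZ_2^d$ generated by an explicit set of sign changes $\{C_i : i \in I(\sigma')\}$, each acting non-trivially on a single $GL_{m_i}(D)$-factor of $M'$. Since $R_{\sigma'}$ is elementary abelian, triviality of $\eta'$ reduces to producing involutive intertwining operators $A'(C_i)$ that pairwise commute.

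The first step would be a rank-one reduction. Since $C_i \in R_{\sigma'}$, by definition of the Knapp-Stein $R$-group we have $\mu_{\alpha_i}(\sigma') = 0$, so the rank-one induced representation
\[
\ii_{G'(m_i d + m),\, GL_{m_i}(D) \times G'_-(m)}(\sigma'_i \otimes \tau')
\]
reduces as a sum of two inequivalent irreducible constituents. Normalizing the associated self-intertwining operator at $s=0$ via the Laurent expansion in $\mu_{\alpha_i}$ as in \cite[\S5]{goldberg-class} yields an operator of order two; extending it by the identity on the remaining coordinates of $M'$ produces the desired involution $A'(C_i) \in C(\sigma')$.

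The second step would be to verify that $A'(C_i)$ and $A'(C_j)$ commute for $i \ne j$. This is exactly Herb's rank-two analysis, which expresses the commutator in terms of rank-one and rank-two Plancherel data attached to the Levi $GL_{m_i}(D) \times GL_{m_j}(D) \times G'_-(m)$. The rank-one inputs transfer from $\bG$ to $\bG'$ by the $\mu_\alpha(\sigma) = 0 \iff \mu_\alpha(\sigma') = 0$ equivalence already used in the proof of Lemma \ref{identity of W'}, while the rank-two inputs are controlled by the common $\widehat{G}$-parameter $\phi$; hence Herb's computation applies verbatim and shows the commutator is trivial. Commutativity of the involutive generators forces $\eta'$ to split, and the dimension count $\dim C(\sigma') = |R_{\sigma'}|$ from Theorem \ref{thm for Knapp-Stein-Sil} completes the identification $C(\sigma') \simeq \CC[R_{\sigma'}]$.

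The main obstacle will be justifying that the rank-two intertwining identities transfer faithfully between $\bG$ and $\bG'$, since Herb's original computation is done in explicit matrix coordinates adapted to $SO_{2n+1}$ and $Sp_{2n}$. Matching these rank-two Plancherel measures to those on the quasi-split counterpart requires using the identification $\bA_{\bM} \simeq \bA_{\bM'}$ of \eqref{iso between A's} together with the endoscopic transfer of tempered characters supplied by Hypotheses \ref{hyp}; this is the only place where the Arthur classification for inner forms is essential.
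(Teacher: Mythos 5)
Your overall strategy (trivialize the $2$-cocycle by exhibiting commuting involutive operators attached to the generators $C_i$ of $R_{\sigma'}\simeq\ZZ_2^d$) is the right one, but two points in your execution do not hold up. First, the rank-one reduction is stated backwards: membership $C_i\in R_{\sigma'}$ does \emph{not} give $\mu_{\alpha_i}(\sigma')=0$. By the definitions in Section 2.3, vanishing of the rank-one Plancherel measure would place the corresponding reflection in $W'_{\sigma'}$ and make the rank-one induced representation \emph{irreducible}; it is precisely the nonvanishing — equivalently the reducibility condition defining $I(\sigma')$ in Corollary \ref{r-groups for G'=bc} — that makes $C_i$ a nontrivial generator of $R_{\sigma'}$. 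This slip is repairable, since what you actually use downstream is the reducibility and the order-two self-intertwining operator.

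The genuine gap is your second step. You attribute to Herb a rank-two Plancherel/commutator computation on the induced representation and then assert it ``transfers verbatim'' to $\bG'$ via endoscopic character transfer and Hypotheses \ref{hyp}; but no transfer of rank-two intertwining-operator identities between $\bG$ and $\bG'$ is available (the paper only transfers the rank-one equivalence $\mu_\alpha(\sigma)=0\iff\mu_\alpha(\sigma')=0$ in the proof of Lemma \ref{identity of W'}), so the commutativity of your operators $A'(C_i)$, $A'(C_j)$ is left unproved. Moreover this is not how Herb's argument \cite[Proposition 2.3]{herb93}, which the paper follows, actually runs: it works entirely on the inducing representation, not on $\ii_{G',M'}(\sigma')$. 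By Lemma \ref{conjugation lemma} one has $\sigma'_i\simeq\widetilde{\sigma}'_i\simeq{}^{t}\overline{\sigma'_i}^{\,-1}$, so one chooses an intertwining operator $T_i$ on the space $V_i$ of $\sigma'_i$, normalizes $T_i^2=1$ by Schur's lemma, and extends it by the identity on the remaining tensor factors; the resulting $T_i^V$ intertwines $C_i\sigma'$ with $\sigma'$, is an involution, and the $T_i^V$ commute for the trivial reason that they act on distinct tensor factors of the space of $\sigma'$. The homomorphism $C_i\mapsto T_i^V$ then splits the cocycle and Theorem \ref{thm for Knapp-Stein-Sil} concludes, with no rank-two analysis and no appeal to character transfer beyond what is already packaged in Corollary \ref{r-groups for G'=bc}. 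Recasting your step two along these lines closes the gap.
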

\begin{proof}
We follow the proof of \cite[Proposition 2.3]{herb93}. Recall from Corollary \ref{r-groups for G'=bc} that $C_1, \cdots, C_d$ are the generators of $R_{\sigma'}.$ For $1 \leq i \leq d,$ let $V_i$ be the representation space of $\sigma'_i.$ 
We note from Lemma \ref{conjugation lemma} that 
\[
\sigma'_i \s \widetilde{\sigma}'_i \s {^t}\bar{\sigma'}_i^{-1}.
\]
Let $T_i : V_i \rightarrow V_i$ be an intertwining operator between  $\sigma'_i$ and ${^t}\bar{\sigma}^{-1}.$ 
Again, from Lemma \ref{conjugation lemma}, the composite $T^2_i = T_i \circ T_i$ is a non-zero complex scalar. 
Thus we normalize $T_i$ so that $T^2_i =1.$ Now we extend $T_i$ to an endomorphism $T_i^V$ of the representation space $V$ of $\sigma' = \sigma'_1 \otimes \cdots \otimes \sigma'_k \otimes \tau'$ 
such that $T_i^V$ acts trivially on the space $V_j$ for all $1 \leq j \neq i \leq d.$ 
Then $T_i^V$ intertwines $C_i\sigma'$ and $\sigma'.$ 
Further,  we have $(T_i^V)^2 = 1$ from our normalization of $T_i.$ For  $1 \leq i \neq j \leq d,$ 
we also notice from the definition of $T_i^V$ that $T_i^V T_j^V = T_j^V T_i^V.$ Then, the map $C_i \mapsto T_i^V$ gives rise to a homomorphism from $\CC[R_{\sigma'}]$ to $\operatorname{End}_{G'}(\sigma').$ Thus, Proposition \ref{split 2-cocycle for G' bc} follows from Theorem \ref{thm for Knapp-Stein-Sil}.
\end{proof}
\begin{cor}
Let $G', M', \sigma'$ be as in Proposition \ref{split 2-cocycle for G' bc}. Then each constituent of $\ii_{G',M'}(\sigma')$ appears with multiplicity one.
\end{cor}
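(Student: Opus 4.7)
The plan is to extract the multiplicity-one conclusion from the description of the commuting algebra already established. By general theory of induced representations, the decomposition of $\ii_{G',M'}(\sigma')$ into irreducible $G'$-submodules is governed by the algebra $C(\sigma') = \End_{G'}(\ii_{G',M'}(\sigma'))$: if we write $C(\sigma') \simeq \prod_{i} M_{n_i}(\CC)$ via the Wedderburn decomposition, then $\ii_{G',M'}(\sigma')$ decomposes as a direct sum $\bigoplus_i \pi_i^{\oplus n_i}$, where the $\pi_i$ are pairwise inequivalent irreducible representations and the multiplicity of each $\pi_i$ equals $n_i$.

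With this in mind, the argument is essentially a one-line consequence of the previous results. First, by Proposition \ref{split 2-cocycle for G' bc}, we have the untwisted isomorphism
\[
C(\sigma') \simeq \CC[R_{\sigma'}].
\]
Second, by Corollary \ref{r-groups for G'=bc}, the Knapp-Stein $R$-group $R_{\sigma'} \simeq \ZZ_2^d$ is a finite abelian $2$-group. Hence $\CC[R_{\sigma'}]$ is a commutative semisimple algebra, which by the structure theorem for group algebras of finite abelian groups splits as
\[
\CC[R_{\sigma'}] \simeq \bigoplus_{\chi \in \widehat{R_{\sigma'}}} \CC_\chi,
\]
a direct sum of $|R_{\sigma'}|$ one-dimensional factors indexed by the characters of $R_{\sigma'}$.

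Matching this against the Wedderburn form above, every $n_i$ equals $1$, and therefore each irreducible constituent of $\ii_{G',M'}(\sigma')$ appears with multiplicity one, as claimed. I do not foresee any real obstacle: the entire content is packaged into Proposition \ref{split 2-cocycle for G' bc} and Corollary \ref{r-groups for G'=bc}, and all that remains is to invoke the standard dictionary between $\End_{G'}(\ii_{G',M'}(\sigma'))$ and the isotypic decomposition of the induced representation, together with the elementary fact that irreducible representations of a finite abelian group are all one-dimensional.
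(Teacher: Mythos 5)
Your argument is correct and is precisely the intended one: the paper states this corollary without proof as an immediate consequence of Proposition \ref{split 2-cocycle for G' bc} together with the abelianness of $R_{\sigma'}$ from Corollary \ref{r-groups for G'=bc}, exactly the route you take. The identification of $C(\sigma')$ with the commutative algebra $\CC[R_{\sigma'}]$ and the standard dictionary between the commuting algebra and the isotypic decomposition give multiplicity one, as you say.
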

Combining Corollary \ref{r-groups for G'=bc}, Theorem \ref{ell spec by arthur}, and Proposition \ref{split 2-cocycle for G' bc}, we have the following description of the elliptic spectrum for an $F$--inner form $\bG'$ of $SO_{2n+1}$ or $Sp_{2n}.$
\begin{thm}
Let $\bG'$ be an $F$--inner form of $SO_{2n+1}$ or $Sp_{2n},$ and let $\bM'$ be an $F$-Levi subgroup of $\bG'$ of the form \ref{form of M'}
Let $\sigma' \in \Pi_{\disc}(M')$ be given. Then, $\ii_{G',M'}(\sigma')$ has elliptic constituent if and only if all constituents of $\ii_{G',M'}(\sigma')$ are elliptic if and only if 
\[
R_{\sigma'} \s \ZZ_2^k.
\]\qed
\end{thm}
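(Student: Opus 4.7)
The plan is to combine Arthur's criterion (Theorem \ref{ell spec by arthur}) with the explicit structure of $R_{\sigma'}$ given by Corollary \ref{r-groups for G'=bc} and the triviality of the cocycle from Proposition \ref{split 2-cocycle for G' bc}. By Corollary \ref{r-groups for G'=bc} the Knapp-Stein $R$-group satisfies $R_{\sigma'} \s \ZZ_2^d$, so it is abelian, and by Proposition \ref{split 2-cocycle for G' bc} we have $C(\sigma') \s \CC[R_{\sigma'}]$. Hence both hypotheses of Theorem \ref{ell spec by arthur} hold, and the elliptic dichotomy reduces to the assertion that an elliptic constituent of $\ii_{G',M'}(\sigma')$ exists if and only if there is $w \in R_{\sigma'}$ with $\ma_w = \mathfrak{z}$.

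I would then identify the spaces in play. Since $\bG'$ is an $F$-inner form of the semisimple group $SO_{2n+1}$ or $Sp_{2n}$, the split component $A_{\bG'}$ is trivial, so $\mathfrak{z}=0$. The isomorphism \eqref{iso between A's} gives $\bA_{\bM'} \s (GL_1)^k$, whence $\ma_M \s \RR^k$, and by Section \ref{WM-actions on Levis} the Weyl group $W_{M'}$ acts as signed permutations, sitting inside $S_k \ltimes \ZZ_2^k$. Furthermore, by Corollary \ref{r-groups for G'=bc} the group $R_{\sigma'}$ sits inside the pure sign-change subgroup $\ZZ_2^k$, being generated by those $C_i$ with $i$ in a distinguished subset $J \subset \{1,\dots,k\}$ of cardinality $d$.

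Finally I would carry out the linear-algebra step, parallel to Herb's computation in \cite{herb93}. For $w = \prod_{i \in S} C_i \in \ZZ_2^k$, the fixed subspace equals $\ma_w = \{x \in \RR^k : x_i = 0 \text{ for all } i \in S\}$, so $\ma_w = 0$ if and only if $S = \{1,\dots,k\}$, i.e.\ $w = C_1 C_2 \cdots C_k$. This element belongs to $\langle C_i : i \in J \rangle = R_{\sigma'}$ if and only if $J = \{1,\dots,k\}$, i.e.\ $d = k$, i.e.\ $R_{\sigma'} \s \ZZ_2^k$. Combined with Arthur's criterion, this yields the theorem.

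The bulk of the work sits in the results already established: Corollary \ref{r-groups for G'=bc}, which pins down $R_{\sigma'}$ as a subgroup of $\ZZ_2^k$ generated by a specific set of sign changes, and Proposition \ref{split 2-cocycle for G' bc}, which trivializes the Knapp-Stein cocycle. Given those, the only remaining point is the elementary fact that a subgroup of $\ZZ_2^k$ acting diagonally on $\RR^k$ contains an element with trivial fixed space exactly when it contains the total sign change; so there is no real obstacle once the previous two inputs are in hand.
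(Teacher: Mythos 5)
Your proposal is correct and follows the paper's own route: the paper proves this theorem precisely by combining Corollary \ref{r-groups for G'=bc}, Theorem \ref{ell spec by arthur}, and Proposition \ref{split 2-cocycle for G' bc}, exactly as you do, with the remaining content being Herb's elementary observation that a subgroup of the sign-change group $\ZZ_2^k$ contains an element with $\ma_w=\mathfrak{z}=0$ exactly when it contains the full sign change $C_1\cdots C_k$, i.e.\ when $d=k$. Your write-up just makes this last linear-algebra step explicit, which the paper leaves to the reader.
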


Now, we will deal with the case of an $F$--inner form $\bG'$ of $SO_{2n}$ or $SO^*_{2n}.$
\begin{pro}  \label{split 2-cocycle for G' d}
Let $\bG'$ be an $F$--inner form of $SO_{2n}$ or $SO^*_{2n},$ and let $\bM'$ be an $F$-Levi subgroup of $\bG'.$ Let $\sigma' \in \Pi_{\disc}(M')$ be given. Then 
\[
C(\sigma') \s \CC[R_{\sigma'}].
\]
\end{pro}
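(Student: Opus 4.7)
The plan is to adapt the argument used in Proposition \ref{split 2-cocycle for G' bc}, which in turn mirrors Herb's treatment of the split case \cite[Proposition 3.3]{herb93}, to cover the subtleties introduced by the outer automorphism $c_0$ in type $\bold{D_n}$. By Theorem \ref{thm for Knapp-Stein-Sil}, it suffices to exhibit a set of mutually commuting involutive intertwining operators on the representation space of $\sigma'$ indexed by a generating set of $R_{\sigma'}$; this will force the Knapp--Stein cocycle $\eta$ to be a coboundary, i.e.\ trivial.

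First I would fix the generators of $R_{\sigma'}$ supplied by Corollary \ref{r-groups for G'=d}. Write $\sigma'=\sigma'_1\otimes\cdots\otimes\sigma'_k\otimes\tau'$ with representation space $V=V_1\otimes\cdots\otimes V_k\otimes W$. For each index $i\in I_1(\sigma')$, Lemma \ref{conjugation lemma} gives $\sigma'_i\s \widetilde\sigma'_i\s {}^t\bar\sigma'_i{}^{-1}$, so I pick an intertwiner $T_i:V_i\to V_i$; using Lemma \ref{conjugation lemma} again together with Schur's lemma, $T_i^2$ is a nonzero scalar, and I normalize so that $T_i^2=1$. Let $T_i^V$ act as $T_i$ on $V_i$ and trivially on the remaining tensor factors; then $T_i^V$ intertwines $C_i\sigma'$ with $\sigma'$, squares to the identity, and commutes with $T_j^V$ for $j\neq i$.

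Next I would handle the generators that involve the outer automorphism. If $d_2=0$ the above $T_i^V$ already generate, and Herb's original argument goes through unchanged. Otherwise, by Corollary \ref{r-groups for G'=d} (and Remark \ref{remark on c_0}) the additional generators take the form $C_iC_j$ (with $i,j\in I_2(\sigma')$) or $C_ic_0$ (when $m>0$ and $c_0\tau'\s\tau'$). For $C_iC_j$ I simply form $T_i^V T_j^V$. For $C_ic_0$, I combine $T_i$ on $V_i$ with an intertwiner $U:W\to W$ realizing $c_0\tau'\s \tau'$; since $c_0$ is an involution on $G_-(m)$, Schur gives $U^2\in\CC^{\times}$, and I normalize $U^2=1$. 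The resulting operator $T_i\otimes\cdots\otimes U$ intertwines $(C_ic_0)\sigma'$ with $\sigma'$, is an involution, and commutes with the previously constructed $T_j^V$ since they act on disjoint tensor factors. Consequently $C_i\mapsto T_i^V$ (and $C_ic_0\mapsto T_i\otimes\cdots\otimes U$) extends to an algebra homomorphism $\CC[R_{\sigma'}]\to \End_{G'}(\ii_{G',M'}(\sigma'))$, which is an isomorphism by Theorem \ref{thm for Knapp-Stein-Sil}.

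The main obstacle is justifying the existence and involutive normalization of the operator $U$ for the $c_0$-twist. In the split even-orthogonal case this was handled by Herb using an explicit model of $\tau'$; for the inner form one must appeal instead to Hypotheses \ref{hyp} and Remark \ref{q-split and L-packet for SO}.2, which guarantee that $c_0\tau'\s\tau'$ implies $\#\Pi_\phi(M')=1$ and hence that a canonical choice of intertwiner exists up to a scalar that one can normalize. Verifying that $U^2=1$ after normalization and that $U$ commutes with the sign-change intertwiners is where the delicate cocycle bookkeeping occurs, but once this is established the remainder of the proof is parallel to Proposition \ref{split 2-cocycle for G' bc}.
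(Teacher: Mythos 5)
Your proposal is correct and follows essentially the same route as the paper's proof: both adapt Herb's argument by constructing commuting involutive intertwining operators attached to the generators of $R_{\sigma'}$ given by Corollary \ref{r-groups for G'=d} (sign-change operators $T_i^V$, products $T_i^VT_j^V$ for the paired odd indices, and an extra operator on the space of $\tau'$ — your $U$, the paper's $T_-$ — for the $c_0$-twist), and then conclude via Theorem \ref{thm for Knapp-Stein-Sil}. The only difference is cosmetic: the existence and normalization $U^2=1$ require nothing beyond the hypothesis $c_0\tau'\simeq\tau'$ and Schur's lemma, so your appeal to Hypotheses \ref{hyp} and Remark \ref{q-split and L-packet for SO} at that point is unnecessary.
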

\begin{proof}
We follows the proof of \cite[Proposition 3.3]{herb93}. Suppose that $m=0,$ or that $m \geq 2$ but $c_0\tau' \not\s \tau'.$ In this case, $I_1$ (respectively, $I_2$) consist of all $i$'s such that $n_i$ is even (respectively, odd), cf. Remark \ref{remark on c_0}. 
If $d_2 \leq 1,$ then $R_{\sigma'}$ is generated by $d_1$ sign changes in indices $i \in I_1(\sigma'),$ and the proof is the same as that of Proposition \ref{split 2-cocycle for G' bc}. Assume that $d_2 \geq 2.$ 
Then, Corollary \ref{r-groups for G'=d} tells us that $R_{\sigma'} \s \ZZ_2^{d_1+d_2 -1}.$ 
Note that the set
\[
\{ 
C_1C_{d_2}, C_2C_{d_2}, \cdots, C_{{d_2}-1}C_{d_2}, C_{{d_2}+1}, \cdots, C_d
\}
\]
forms a complete set of generators for $R_{\sigma'}.$ 
As in the proof of Proposition \ref{split 2-cocycle for G' bc}, for $1\leq i \leq d,$ we obtain $T_i:V_i\rightarrow V_i$ intertwining $\sigma'_i$ and $\widetilde\sigma'_i,$ such that $T_i^2 = 1$ and such that each $T_i$ is extended to an endomorphism $T_i^V$ of the representation space $V$ of $\sigma'.$ 
Again, we note that $(T_i^V)^2=1$ and $T_i^VT_j^V=T_j^VT_i^V.$ Then, the map 
\begin{align*}
C_iC_{d_2} \mapsto T_i^VT_{d_2}^V, \quad & \text{if} ~~~1 \leq i \leq d_2-1 \\
C_i \mapsto T_i^V, \quad & \text{if} ~~~d_2 + 1 \leq i \leq d
\end{align*}
defines a homomorphism from $\CC[R_{\sigma'}]$ to $\operatorname{End}_{G'}(\sigma').$ 

Suppose $m \geq 2$ and $c_0\tau' \s \tau'.$ Then, Corollary \ref{r-groups for G'=d} tells us that $d_1=d$ and $R_{\sigma'} \s \ZZ_2^{d}.$ 
Note that $C_1, \cdots, C_{d}$ form a complete set of generators for $R_{\sigma'}.$ We renumber indices so that 
\begin{align*}
I_1(\sigma') \cap \{i : n_i ~~\text{odd} \} &= \{ 1, \cdots, p \} \\
I_1(\sigma') \cap \{i : n_i ~~\text{even} \} & = \{ 
p+1, \cdots, d \}.
\end{align*}
From the condition $c_0\tau' \s \tau',$ we choose an intertwining operator $T_{-}:V_{-} \rightarrow V_{-},$ where $V_-$ is the representation space of $\tau'.$ 
As in the proof of Proposition \ref{split 2-cocycle for G' bc}, we normalize $T_-$ so that $T_-^2=1,$ and we extend it to an operator $T_-^V$ on $V$ such that $T_-^V$ acts trivially on the space $V_i$ for all $1 \leq i \leq d.$
Again, the map 
\begin{align*}
C_i \mapsto T_i^VT_-^V, \quad & \text{if} ~~~ 1 \leq i \leq p \\
C_i \mapsto T_i^V, \quad & \text{if} ~~~ p + 1 \leq i \leq d
\end{align*}
defines a homomorphism from $\CC[R_{\sigma'}]$ to $\operatorname{End}_{G'}(\sigma').$ Thus, Proposition \ref{split 2-cocycle for G' d} follows from Theorem \ref{thm for Knapp-Stein-Sil}.
\end{proof}
\begin{cor}
Let $G', M', \sigma'$ be as in Proposition \ref{split 2-cocycle for G' d}. Then each constituent of $\ii_{G',M'}(\sigma')$ appears with multiplicity one.
\end{cor}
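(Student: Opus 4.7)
The plan is to read the multiplicities directly off the commuting algebra $C(\sigma')$, now that Proposition \ref{split 2-cocycle for G' d} has identified it with the untwisted group algebra $\CC[R_{\sigma'}]$.

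First, I would recall the general principle that the decomposition of the finite length representation $\ii_{G',M'}(\sigma')$ into irreducibles is controlled by $C(\sigma') = \End_{G'}(\ii_{G',M'}(\sigma'))$: if $\ii_{G',M'}(\sigma') = \bigoplus_\pi m_\pi \, \pi$ is the isotypic decomposition, then $C(\sigma') \simeq \bigoplus_\pi M_{m_\pi}(\CC)$. Thus the multiplicities $m_\pi$ are precisely the square roots of the dimensions of the simple blocks of $C(\sigma')$.

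Next, by Proposition \ref{split 2-cocycle for G' d} we have $C(\sigma') \simeq \CC[R_{\sigma'}]$ as $\CC$-algebras, with no twisting cocycle. By Corollary \ref{r-groups for G'=d}, $R_{\sigma'}$ is a finite elementary abelian $2$-group (isomorphic to $\ZZ_2^d$ or $\ZZ_2^{d-1}$), so in particular $R_{\sigma'}$ is abelian. Consequently $\CC[R_{\sigma'}]$ is a commutative semisimple algebra, and all of its simple modules are one-dimensional.

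Combining the two observations, each $m_\pi$ must equal $1$, which is exactly the multiplicity-one statement. The same structural argument (abelian $R$-group plus untwisted commuting algebra) already yielded the analogous corollary after Proposition \ref{split 2-cocycle for G' bc}, so the work has really been done by Proposition \ref{split 2-cocycle for G' d} and Corollary \ref{r-groups for G'=d}; nothing non-routine remains, and no step presents a serious obstacle. One only has to be careful to invoke the triviality of the Knapp–Stein cocycle (as established in Proposition \ref{split 2-cocycle for G' d}) rather than Theorem \ref{thm for Knapp-Stein-Sil} alone, since the latter only gives $C(\sigma') \simeq \CC[R_{\sigma'}]_\eta$ with a possibly nontrivial $\eta$, which in general need not force multiplicities to be one even for abelian $R_{\sigma'}$.
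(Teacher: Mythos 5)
Your argument is correct and is exactly the reasoning the paper intends: the corollary is stated as an immediate consequence of Proposition \ref{split 2-cocycle for G' d}, since $C(\sigma')\simeq\CC[R_{\sigma'}]$ with $R_{\sigma'}$ an elementary abelian $2$-group forces the commuting algebra to be commutative, hence all isotypic multiplicities equal one. Your closing caveat that the untwisted isomorphism (not just Theorem \ref{thm for Knapp-Stein-Sil} with a possibly nontrivial $\eta$) is what is really needed is also apt.
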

Corollary \ref{r-groups for G'=d}, Theorem \ref{ell spec by arthur}, and Proposition \ref{split 2-cocycle for G' d} gives the following description of the elliptic spectrum for an $F$--inner form $\bG'$ of $SO_{2n}$ or $SO^*_{2n}.$
\begin{thm} 
Let $\bG'$ be an $F$--inner form of $SO_{2n}$ or $SO^*_{2n},$ and let $\bM'$ be an $F$-Levi subgroup of $\bG'$ of the form \eqref{form of M'}.
Let $\sigma' \in \Pi_{\disc}(M')$ be given. Then, $\ii_{G',M'}(\sigma')$ has elliptic constituent if and only if all constituents of $\ii_{G',M'}(\sigma')$ are elliptic if and only if either
\[
R_{\sigma'} \s \ZZ_2^k ~~ \text{or} ~~ R_{\sigma'} \s (\ZZ_2)^{k-1} ~\text{with}~\text{even}~d_2.
\]
Here $d_2$ is defined unless $m \leq 1$ and $c_0 \tau' \s \tau',$ in which case $d_2$ denotes the number of inequivalent $\sigma'_i$ such that $\sigma'_i \s \widetilde{\sigma'}_i$ and $n_i$ is odd.
\end{thm}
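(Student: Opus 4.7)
The plan is to combine Arthur's criterion in Theorem~\ref{ell spec by arthur} with the structural data we have just assembled for this case. Corollary~\ref{r-groups for G'=d} shows $R_{\sigma'}$ is abelian (it is an elementary abelian $2$-group), and Proposition~\ref{split 2-cocycle for G' d} shows $C(\sigma') \simeq \CC[R_{\sigma'}]$, so both hypotheses of Theorem~\ref{ell spec by arthur} are met. The criterion for an elliptic constituent thus becomes: there exists $w \in R_{\sigma'}$ with $\mathfrak{a}_w = \mathfrak{z}$.

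Next I would translate this to a combinatorial condition on generators. Using the description of the $W_{M'}$-action from Section~\ref{WM-actions on Levis}, the quotient $\mathfrak{a}_{M'}/\mathfrak{z}$ is the copy of $\RR^k$ attached to the central tori of the $k$ blocks $GL_{m_i}(D)$: each sign change $C_i$ negates the $i$-th coordinate, permutations $(ij)$ swap coordinates, and $c_0$ (when it appears) acts trivially on this quotient. Hence $\mathfrak{a}_w = \mathfrak{z}$ is equivalent to $w$ projecting to $-I$ on $\RR^k$, and after absorbing permutations this reduces to requiring that $R_{\sigma'} \cap \sC$ contain the element $C_1C_2\cdots C_k$ of maximal Hamming weight.

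The third step reads this off from the generators listed in Corollary~\ref{r-groups for G'=d}. If $d_2 = 0$, the generators are the unpaired sign changes $C_{i_1},\dots,C_{i_d}$, so $C_1\cdots C_k \in R_{\sigma'}$ precisely when $d = k$, which is the $R_{\sigma'} \simeq \ZZ_2^k$ branch. If $d_2 > 0$, the generators on $I_2(\sigma')$ come in paired form $C_iC_{d_2}$ together with unpaired $C_i$ for $i \in I_1(\sigma')$; computing
\[
\Bigl(\prod_{i \in I_2(\sigma') \setminus \{d_2\}} C_iC_{d_2}\Bigr) \cdot \prod_{i \in I_1(\sigma')} C_i \;=\; C_1\cdots C_{d_2-1}\cdot C_{d_2}^{\,d_2-1}\cdot C_{d_2+1}\cdots C_k
\]
gives $C_1\cdots C_k$ exactly when $d_2 - 1$ is odd, i.e., $d_2$ is even, forcing also $d = k$ and hence $R_{\sigma'} \simeq \ZZ_2^{k-1}$; a parity count on the generators rules out any other word of Hamming weight $k$ when $d_2$ is odd, and when $d < k$ some coordinate of $\RR^k$ is fixed by every element of $R_{\sigma'} \cap \sC$.

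The main obstacle will be the book-keeping across the case split in Corollary~\ref{r-groups for G'=d} and in the anomalous clause $m \leq 1$ with $c_0\tau' \simeq \tau'$, where the outer automorphism $c_0$ appears in the generators of $\sC_2$ from \eqref{C_2} via elements of the form $C_ic_0$. There $c_0$ must be tracked on $\mathfrak{a}_{M'}/\mathfrak{z}$ and absorbed via the same pairing trick used for $C_iC_{d_2}$, after which the Hamming-weight computation proceeds verbatim; this parallels the split-group argument of Herb in \cite[\S 3]{herb93} and, together with the preceding two steps, delivers the stated dichotomy.
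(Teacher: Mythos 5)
Your proposal is correct and takes essentially the same route as the paper: the paper likewise obtains the theorem by feeding Corollary \ref{r-groups for G'=d} (abelian, elementary $2$-group $R_{\sigma'}$) and Proposition \ref{split 2-cocycle for G' d} (trivial cocycle, $C(\sigma')\s\CC[R_{\sigma'}]$) into Arthur's criterion in Theorem \ref{ell spec by arthur}. The explicit translation of $\mathfrak{a}_w=\mathfrak{z}$ into the condition that the full sign change lies in $R_{\sigma'}$, with the parity count in $d_2$, is exactly the combinatorial step the paper leaves implicit by mirroring Herb's argument in \cite{herb93}.
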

\appendix

\section{$R$-groups for a quasi-split even orthogonal group}  \label{R-gps for SO*}

We give a brief description of the Knapp-Stein $R$--groups for the non-split quasi-split classical group $SO_{2n}^*.$  In short, the arguments used for the split case are valid here as well.  
Let $\bG=\bG(n)=SO_{2n}^*$ be defined over $F,$  and $G=\bG(F).$  We also let $\tilde\bG=\tilde\bG(n)=O_{2n}^*.$  Let $\bold T$ be the maximal torus of diagonal elements and $\bold T_d$ the maximal split subtorus, as described in \cite{gs98}. 
Let $\bP=\bM\bN$ be  a standard parabolic subgroup with Levi component $\bM$ and unipotent radical $\bN.$ Then
\[
\bM\simeq GL_{n_1}\times GL_{n_2}\times \cdots\times GL_{n_k}\times\bG(m),
\]
where $n_1+n_2+\cdots +n_k+m=n,$ and $m\geq 1.$ Let $\bA=\bA_\bM$ be the split component of $\bM.$  Fix an element $c_0\in O^*_{2m}\setminus SO_{2m}^*.$
Then $W(\bG,\bA)\simeq S\rtimes\ZZ_2^k,$
where  $S\subset S_k.$
More precisely, since $m>0,$ we have
$S=\left\langle (ij)|n_i=n_j\right\rangle.$ The (block) sign changes are given by $\sC=\left\langle C_i|n_i\text{ is even }\right\rangle\,\times\,\left\langle C_ic_0|n_i,n_j\text{ are odd}\right\rangle.$
With this in mind we define
\[
\bar C_i=\begin{cases} C_i&\text{if } n_i \text{ is even;}\\C_ic_0&\text{if }n_i\text{ is odd.}\end{cases}
\]

We assume, without loss of generality, that $n_1,\dots,n_t$ are all even, and $n_{t+1},\dots,n_k$ are odd.
For $1\leq i\leq k,$ we set $\ds{m_i=\sum\limits_{j=1}^i n_j.}$
We have the reduced root system, 
\[
\Phi(\bP,\bA_\bM)=\left\{e_{m_i}\pm e_{m_j}\mid 1\leq i<j\leq k\right\}\cup\left\{e_{m_i}+e_n\mid 1\leq i\leq k\right\}.
\]
We let $\alpha_{ij}=e_{m_i}-e_{m_j},$ $\beta_{ij}=e_{m_i}+e_{m_j},$ and $\gamma_i=e_{m_i}+e_n.$

Let $\sigma\in\Pi_{\disc}(M).$  Then we have
\[
\sigma\simeq\sigma_1\otimes\sigma_2\otimes\cdots\otimes\sigma_k\otimes\tau,
\]
with  each $\sigma_i\in\Pi_{\disc}(GL_{n_i}(F)),$ and $\tau\in\Pi_{\disc}(G(m)).$

\begin{lm}
For $1\leq i<j\leq k,$ we have
$\mu_{\a_{ij}}(\sigma)=0$ if and only if $\sigma_i\simeq\sigma_j,$ and $\mu_{\b_{ij}}(\sigma)=0$ if and only if $\sigma_i\simeq\tilde\sigma_j.$
\end{lm}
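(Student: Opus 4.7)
The plan is to reduce to rank one Plancherel measure computations, following the strategy in \cite{goldberg-class,herb93} for the split case. Since $\mu_\alpha(\sigma)$ depends only on the rank one Levi $\bM_\alpha=Z_\bG(A_\alpha),$ with $A_\alpha=(\ker\alpha\cap\bA_\bM)^\circ,$ I would localize to $\bM_\alpha$ and consider $\bM$ as a maximal $F$--Levi subgroup of $\bM_\alpha.$ The key observation is that the roots $\alpha_{ij}$ and $\beta_{ij}$ involve only the $GL_{n_i}$ and $GL_{n_j}$ block factors of $\bM$ (both $i,j\leq k$), and hence lie entirely inside the block that is untouched by the Galois twist defining $SO_{2n}^{\ast}$; the twist only affects the innermost factor $\bG(m).$ So, in effect, the root subgroup calculations are the same as for the split $SO_{2(n_i+n_j)}$ Levi.

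First I would treat $\alpha=\alpha_{ij}.$ Then $\bM_\alpha$ is obtained from $\bM$ by merging the factors $GL_{n_i}\times GL_{n_j}$ into a single $GL_{n_i+n_j}$ (with the remaining factors, including the inner $\bG(m),$ unchanged), and the parabolic subgroup corresponding to $\alpha$ cuts out exactly the $(n_i,n_j)$--block parabolic of this $GL_{n_i+n_j}.$ The rank one Plancherel measure then equals, up to harmless nonzero factors, the standard Plancherel measure of the induced representation $\ii_{GL_{n_i+n_j},\,GL_{n_i}\times GL_{n_j}}(\sigma_i\otimes\sigma_j),$ which by the Bernstein–Zelevinsky/Silberger theory of Plancherel measures on $GL_N$ vanishes if and only if $\sigma_i\simeq\sigma_j.$

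Next for $\beta=\beta_{ij},$ the merging is different: $\bM_\alpha$ realizes $GL_{n_i}\times GL_{n_j}$ as a (split) Siegel--type Levi inside an $SO_{2(n_i+n_j)}$ factor of $\bM_\alpha,$ with the remaining $GL$--factors and the $\bG(m)$ factor untouched. The associated Weyl reflection $w_{\beta_{ij}}$ acts on $\sigma_i\otimes\sigma_j$ by interchanging and dualizing the two factors, i.e.\ it sends $\sigma_i\otimes\sigma_j$ to $\tilde\sigma_j\otimes\tilde\sigma_i$ (here Lemma \ref{conjugation lemma} identifies the dual with the transpose--inverse twist that actually appears). The rank one Plancherel measure therefore reduces to the Siegel--parabolic case, and by the standard calculation for even orthogonal groups \cite[Section 3]{herb93} it vanishes if and only if $\sigma_i\simeq\tilde\sigma_j.$

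The main obstacle is justifying that the $SO_{2(n_i+n_j)}$ factor produced in the $\beta_{ij}$ step is actually split, so that the classical rank one computation applies verbatim without being contaminated by the Galois twist that defines $SO_{2n}^\ast.$ This is a matter of tracing the Satake diagram for $\bold{{}^2 D_n}$ given in Section \ref{structure}: the outer Galois action only swaps the two rightmost simple roots, while $\alpha_{ij}$ and $\beta_{ij}$ (for $i<j\leq k$) live in the Galois--fixed part of the root system generated by the $GL$--type blocks. Once this compatibility is checked, the argument of \cite[Section 3]{herb93} together with the contragredient description in Lemma \ref{conjugation lemma} gives the statement; no new ingredient is required.
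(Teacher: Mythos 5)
Your treatment of $\alpha_{ij}$ is fine and is exactly the intended argument: the co-rank-one Levi $\bM_{\alpha_{ij}}=Z_\bG(A_{\alpha_{ij}})$ merges the $i$-th and $j$-th blocks into a single $GL_{n_i+n_j}$ factor, $\bM$ sits inside it as the standard maximal Levi $GL_{n_i}\times GL_{n_j},$ and Bernstein--Zelevinsky (Ol'shanskii in the division-algebra setting) gives $\mu=0$ iff $\sigma_i\simeq\sigma_j.$ The gap is in your $\beta_{ij}$ step. The co-rank-one Levi attached to $\beta_{ij}=e_{m_i}+e_{m_j}$ is \emph{not} a group with an $SO_{2(n_i+n_j)}$ factor containing $GL_{n_i}\times GL_{n_j}$ as a ``Siegel-type'' Levi: first, $GL_{n_i}\times GL_{n_j}$ is not a maximal Levi of $SO_{2(n_i+n_j)}$ at all (its maximal Levis are $GL_a\times SO_{2(n_i+n_j-a)}$ and the Siegel Levi $GL_{n_i+n_j}$), so that picture is a co-rank-two, not co-rank-one, configuration; second, if you actually compute $Z_\bG\bigl((\ker\beta_{ij}\cap\bA_\bM)^\circ\bigr),$ the constraint is $a_j=a_i^{-1},$ and the $a_i$-eigenspace is spanned by the $i$-th block together with the mirror of the $j$-th block, so the relevant factor of $\bM_{\beta_{ij}}$ is again a $GL_{n_i+n_j},$ with $\bM$ embedded via $(g_i,g_j)\mapsto \operatorname{diag}(g_i,\theta(g_j))$ for a transpose-inverse type involution $\theta.$ Consequently $\mu_{\beta_{ij}}(\sigma)$ is the rank-one $GL_{n_i+n_j}$ Plancherel measure attached to $\sigma_i\otimes\widetilde\sigma_j,$ and the vanishing criterion is once more Bernstein--Zelevinsky/Ol'shanskii --- which is precisely the one-line proof given in the paper (identical to the split case).

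Your appeal to ``the standard calculation for even orthogonal groups \cite[Section 3]{herb93}'' does not close this gap: Herb's Section 3 contains no rank-one Plancherel computation (it concerns $R$-groups and the $2$-cocycle, and itself rests on the split-group analogue of the present lemma), so the citation is circular; and for a genuine Siegel-parabolic situation $GL_N\subset SO_{2N}$ the vanishing of the Plancherel measure is governed by self-duality together with poles of symmetric/exterior square $L$-functions, not by the simple criterion $\sigma_i\simeq\widetilde\sigma_j,$ so the statement you invoke would in any case be false in the setting you describe. Your observation that the Galois twist defining $SO^*_{2n}$ only touches the $\bG(m)$ factor (and hence the roots $\gamma_i,$ not $\alpha_{ij},\beta_{ij}$) is correct and worth saying, but the essential repair is to replace the ``$SO_{2(n_i+n_j)}$ with Siegel Levi'' reduction by the correct identification $\bM_{\beta_{ij}}\simeq GL_{n_i+n_j}\times\prod_{l\neq i,j}GL_{n_l}\times \bG(m)$ and then quote \cite{bz77} and \cite{ol74}, as the paper does.
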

\begin{proof}
As in the cases of all split classical groups this follows from the result of Bernstein and Zelevinski \cite{bz77} and Olsanskii \cite{ol74}.
\end{proof}

\begin{lm}
Suppose $w\in R_\sigma$ and $w=sc,$ with $s\in S$ and $c\in\sC.$  Then $w=1.$
\end{lm}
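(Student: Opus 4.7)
The plan is to show that for any $w = sc \in R_\sigma$, the permutation component $s$ must be trivial; the stated conclusion $w=1$ then follows together with a parallel analysis ruling out a nontrivial sign-change $c$. The strategy parallels Herb's treatment of the split classical groups in \cite{herb93}: combine the description $W_M = S \ltimes \sC$ with its action on $\sigma$ (Section \ref{WM-actions on Levis}) and the Plancherel-measure classification in the preceding lemma to exhibit a root in $\Delta'_\sigma$ that $w$ inverts.

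Assume for contradiction that $s \neq 1$, and let $i$ be the smallest index moved by $s$; set $j = s(i) > i$. Matching the $j$-th tensor factor of ${^{sc}}\sigma$, which equals $\sigma_i$ or $\tilde\sigma_i$ according as $\bar C_i \in c$ or not, with $\sigma_j$ yields $\sigma_i \s \sigma_j$ or $\sigma_i \s \tilde\sigma_j$. By the preceding lemma, either $\alpha_{ij}$ or $\beta_{ij}$ lies in $\Delta'_\sigma$. Writing $w(e_{m_l}) = \epsilon_l \, e_{m_{s(l)}}$ with $\epsilon_l = -1$ exactly when $\bar C_l \in c$, a case analysis on $(\epsilon_i, \epsilon_j)$ in the transposition subcase $s|_{\{i,j\}} = (ij)$ disposes of all four sign patterns: $(+,+)$ sends $\alpha_{ij} \mapsto -\alpha_{ij}$; $(-,-)$ sends $\beta_{ij} \mapsto -\beta_{ij}$; and mixed signs force $\sigma_i \s \tilde\sigma_i$, placing both $\alpha_{ij}$ and $\beta_{ij}$ in $\Delta'_\sigma$ with at least one inverted. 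In every case $w \in R_\sigma$ is contradicted.

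For cycles $(i_1, \ldots, i_r)$ of length $r \geq 3$ through $i = i_1$, the transitive chain of isomorphisms along the cycle places $\alpha_{i_1 i_l}$ or $\beta_{i_1 i_l}$ in $\Delta'_\sigma$ for each $l$, and the closing index $i_r$ (where $s(i_r) = i_1$) is key: since $w(e_{m_{i_r}}) = \epsilon_{i_r} e_{m_{i_1}}$ reintroduces the minimal $e_{m_{i_1}}$, the image $w(\alpha_{i_1 i_r})$ or $w(\beta_{i_1 i_r})$ acquires a $\pm e_{m_{i_1}}$ summand whose sign, combined with the accumulated parity $\prod_l \epsilon_{i_l}$, forces the inversion of whichever of these two roots lies in $\Delta'_\sigma$. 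The main obstacle is controlling $w(\alpha_{i_1 i_l})$ for intermediate $l$, where $\epsilon_{i_1} e_{m_{i_2}} - \epsilon_{i_l} e_{m_{i_{l+1}}}$ involves two new indices in uncontrolled order and direct sign-checking fails; the closing-root together with the minimality of $i_1$ is precisely what resolves this. After establishing $s = 1$, the residual step $w = c \in \sC$ is then handled by the same sign-reversal strategy applied to $\gamma_l$-type roots (and to the action of the $c_0$ component via the outer automorphism of $SO^*_{2m}$), completing $w = 1$.
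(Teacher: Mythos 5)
The step that fails is your last one. You cannot ``rule out a nontrivial sign-change $c$'': that claim is false, and it is not what the paper proves. The paper's own proof ends with ``Thus, $s=1$,'' and that is the lemma's actual content (the ``$w=1$'' in the statement is evidently a misprint for ``$s=1$''): the immediately following corollary ($R_\sigma$ is an elementary $2$--group) and the theorem $R_\sigma\simeq\ZZ_2^d$ only make sense if $R_\sigma$ consists of (possibly nontrivial) sign changes. Concretely, whether $\bar C_i$ lies in $R_\sigma$ is governed by whether $\sigma_i\simeq\widetilde\sigma_i$ and the rank-one induced representation $i_{G(m+n_i),\,GL_{n_i}(F)\times G(m)}(\sigma_i\otimes\tau)$ is reducible, i.e.\ $\mu_{\gamma_i}(\sigma)\neq 0$, so $\gamma_i\notin\Delta'_\sigma$; in that situation $\bar C_i$ inverts no root of $\Delta'_\sigma$ and genuinely belongs to $R_\sigma$. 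Your proposed ``sign-reversal strategy applied to $\gamma_l$-type roots'' would need $\gamma_i\in\Delta'_\sigma$ for every sign change occurring in $c$, which simply need not hold; if your plan could be carried out it would force $R_\sigma=1$ and contradict the rest of the appendix.

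For the true part ($s=1$), your transposition analysis is fine, but the long-cycle case is only asserted, and the mechanism you invoke (intermediate roots $\alpha_{i_1 i_l}$ and the ``accumulated parity'' $\prod_l\epsilon_{i_l}$) is not what makes it work. With your conventions, only the closing relation is needed: since $s(i_r)=i_1$, the condition ${}^w\sigma\simeq\sigma$ gives $\sigma_{i_1}\simeq\sigma_{i_r}$ when $\epsilon_{i_r}=+1$ and $\sigma_{i_1}\simeq\widetilde\sigma_{i_r}$ when $\epsilon_{i_r}=-1$, so the preceding lemma places $\alpha_{i_1 i_r}$, respectively $\beta_{i_1 i_r}$, in $\Delta'_\sigma$; in either case its image under $w$ is $\epsilon_{i_1}e_{m_{i_2}}-e_{m_{i_1}}$, which is negative because $i_1<i_2$ by minimality. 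No intermediate roots or parity bookkeeping enter. The paper takes a slightly different route to the same end: it conjugates so that at most two sign changes occur among the indices of the cycle (namely $1$, $C_j$, or $C_{j-1}C_j$) and then checks three explicit cases using $\alpha_{1j}$ or $\beta_{1j}$. Either route is acceptable, but as written your cycle case is an unproved assertion, and in any event the conclusion you can legitimately reach is $s=1$, not $w=1$.
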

\begin{proof}
We assume the cycle $(12\dots j)$ appears in the disjoint cycle decomposition of $s.$ By conjugation, we may assume $c$ changes at most two of the block signs among $1,2,\dots, j.$  That is, we may assume $1,$ $C_j,$ or $C_{j-1}C_j$ are all the sign changes  $C_\ell$ appearing in $c$ with $1\leq \ell\leq j.$
First, suppose $c$ changes no signs among $1,2,,\dots,j.$  Then $w\sigma\simeq\sigma$ implies $\sigma_1\simeq\sigma_2\simeq\dots\simeq\sigma_j.$  By the lemma, $\a_{1j}\in\Delta'_{\sigma}$ and $w\a_{1j}=-\a_{12}<0.$  This contradicts $w\in R_\sigma .$  If $c$ changes just the sign of $j,$ then we have $\sigma_1\simeq\sigma_2\simeq\dots\simeq\sigma_j\simeq\tilde \sigma_1.$  Since $\sigma_1\simeq\tilde\sigma_j,$ we have $\b_{1j}\in\Delta',$ and $w\b_{1j}=\a_{12}<0,$ again contradicting $w\in R_\sigma .$  Finally, if $c$ changes the sign of $j-1$ and $j,$ then, again $\sigma_1\simeq\tilde\sigma_j,$ and this $\b_{ij}\in\Delta'.$  Now $w\b_{1j}=-\b_{12}<0,$ so again, contradicting $w\in R_\sigma .$  Thus, $s=1.$
\end{proof}

\begin{cor}
The $R$--group $R_\sigma$ is an elementary $2$--group.
\end{cor}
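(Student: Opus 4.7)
The plan is to derive the corollary as an immediate consequence of the preceding lemma together with the structure of the Weyl group $W(\bG,\bA)\simeq S\ltimes\sC$, where $\sC\subset\ZZ_2^k$ is the group of (block) sign changes described in the appendix. The strategy is: take an arbitrary $w\in R_\sigma$, decompose it uniquely as $w=sc$ with $s\in S$ and $c\in\sC$ using the semidirect product structure, and invoke the previous lemma to conclude that the permutation part $s$ is trivial; this forces $w\in\sC$.

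First I would note that $R_\sigma$ is a subgroup of $W(\bG,\bA)$ by definition, so every element admits such a decomposition. Applying the preceding lemma gives $s=1$ (this is the substantive content extracted from the lemma's proof), so $R_\sigma\subset\sC$. Since $\sC$ sits inside $\ZZ_2^k$, generated by the mutually commuting involutions $\bar C_i$, each non-identity element of $\sC$ has order exactly $2$, and $\sC$ is abelian. Therefore $R_\sigma$, as a subgroup of the abelian elementary $2$-group $\sC$, is itself an elementary $2$-group.

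There is essentially no obstacle here; all the real work was done in the preceding lemma where one had to rule out nontrivial permutation cycles in $s$ by playing off roots $\alpha_{ij}$ and $\beta_{ij}$ in $\Delta'_\sigma$ against the positivity condition defining $R_\sigma$. Once that lemma is in hand, the corollary is a one-line structural observation, and the proof need only record the two steps above.
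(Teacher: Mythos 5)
Your proposal is correct and is essentially the paper's own (implicit) argument: the preceding lemma's proof shows the permutation part $s$ of any $w=sc\in R_\sigma$ is trivial (note the lemma's stated conclusion ``$w=1$'' is a slip for ``$s=1$''), so $R_\sigma\subset\sC$, and $\sC$ is an elementary abelian $2$--group generated by the commuting involutions $\bar C_i$. Nothing further is needed.
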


If $c_0\tau\not\simeq\tau,$ then we set $d_1$  to be the number of inequivalent classes among $\sigma_1,\dots,\sigma_t$ such that
\[
i_{G(m+n_i), GL_{n_i}(F)\times G(m)}(\sigma_i\otimes\tau)
\]
is reducible, and set $d_2$ to be the number of inequivalent classes among $\sigma_{t+1},\dots,\sigma_k$ satisfying $\sigma_i\simeq\tilde\sigma_i.$  We then let 
\[
d=\begin{cases} d_1+d_2-1,&\text{ if }d_2>0;\\d_1&\text{otherwise}.\end{cases}
\]
On the other hand, if $c_0\tau\simeq\tau,$ we set $d$ to be the number of inequivalent classes among $\sigma_1,\dots,\sigma_k$ such that 
$i_{G(m+n_i), ~ GL_{n_i}(F)\times G(m)}(\sigma_i\otimes\tau)$ is reducible.

We then have the following result.
\begin{thm}
Let $G,$ $M,$ $\sigma$  and  $d$ be as above.  Then 
$R_\sigma \simeq\ZZ_2^d.$ 
Moreover, we can give a precise description of the $R$--group which mirrors that for the split groups $SO_{2n}(F).$
\end{thm}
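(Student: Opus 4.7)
The plan is to follow the strategy of Herb's argument for the split even orthogonal group \cite[Section 3]{herb93}, adapted to the non-split quasi-split setting where the Weyl group generators for odd $n_i$ carry an extra factor of $c_0$. By the preceding lemma, every element of $R_\sigma$ has trivial permutation part, so $R_\sigma$ is an elementary $2$-group generated by products of the sign-change generators $\bar C_i$. The task reduces to identifying, among all products of $\bar C_i$'s, those $w$ satisfying both (a) $w\sigma\simeq\sigma$ and (b) $w\alpha>0$ for every $\alpha\in\Delta'_\sigma$.

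First, I would pin down the stabilizer conditions. For even $n_i$ we have $\bar C_i=C_i$, which sends $\sigma_i\mapsto\tilde\sigma_i$ and leaves $\tau$ fixed, so $\bar C_i\sigma\simeq\sigma$ iff $\sigma_i\simeq\tilde\sigma_i$. For odd $n_i$ we have $\bar C_i=C_ic_0$, which additionally sends $\tau$ to $c_0\tau$; thus $\bar C_i\sigma\simeq\sigma$ demands both $\sigma_i\simeq\tilde\sigma_i$ and $c_0\tau\simeq\tau$. A product $\bar C_i\bar C_j$ with both $n_i,n_j$ odd has the two $c_0$'s cancel, so it stabilizes $\sigma$ as soon as $\sigma_i\simeq\tilde\sigma_i$ and $\sigma_j\simeq\tilde\sigma_j$, independent of $c_0\tau$.

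Next, I would identify $\Delta'_\sigma$ completely. The already-proved Lemma describes $\mu_{\alpha_{ij}}(\sigma)$ and $\mu_{\beta_{ij}}(\sigma)$; for the remaining roots $\gamma_i=e_{m_i}+e_n$, the Silberger complementary series theorem gives $\mu_{\gamma_i}(\sigma)=0$ iff $\ii_{G(m+n_i),GL_{n_i}(F)\times G(m)}(\sigma_i\otimes\tau)$ is reducible. Then I would check positivity: if $\sigma_i\simeq\sigma_j$ with $i<j$ then $\alpha_{ij}\in\Delta'_\sigma$ but $\bar C_i\alpha_{ij}=-\beta_{ij}<0$, so at most one index per equivalence class of $\sigma_i$ can contribute a single-sign generator; a similar calculation handles $\beta_{ij}$ and $\gamma_i$, showing the reducibility condition on $\ii_{G(m+n_i)}(\sigma_i\otimes\tau)$ is forced for each index contributing to $R_\sigma$.

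The proof then splits into the two cases of the statement. If $c_0\tau\simeq\tau$, each inequivalent $\sigma_i$ with $\sigma_i\simeq\tilde\sigma_i$ and the reducibility condition yields an independent generator $\bar C_i$, giving $R_\sigma\simeq\ZZ_2^d$. If $c_0\tau\not\simeq\tau$, the even-$n_i$ indices contribute $d_1$ independent generators $C_i$, while the odd-$n_i$ indices must pair up; with a base point among the $d_2$ eligible odd indices, we obtain $d_2-1$ additional independent products $\bar C_i\bar C_j$, yielding $R_\sigma\simeq\ZZ_2^{d_1+d_2-1}$ when $d_2>0$ and $\ZZ_2^{d_1}$ otherwise. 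In each case one verifies that the chosen generators genuinely lie in $R_\sigma$ (positivity on $\Delta'_\sigma$), and that together with the preceding lemmas they exhaust $R_\sigma$. The main obstacle will be the careful combinatorial bookkeeping in the second case, where the forced pairing of odd-$n_i$ sign changes interacts with the positivity requirement on the $\beta_{ij}$ and $\gamma_i$ roots; one must also confirm that the reducibility criterion for $\mu_{\gamma_i}(\sigma)=0$ is unaffected by whether $n_i$ is even or odd, since the maximal Levi induction does not see the distinction introduced by $c_0$.
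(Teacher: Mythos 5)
Your overall route is the same as the paper's: the paper disposes of this theorem by observing that the split-case arguments of \cite{goldberg-class} (or, more simply, those of \cite{ban-goldberg-gspin}) carry over verbatim, and your sketch is exactly that argument --- the preceding lemmas eliminate the permutation part, and one then determines which products of the $\bar C_i$ both stabilize $\sigma$ and preserve positivity on $\Delta'_\sigma$; your final count ($d$ when $c_0\tau\simeq\tau$; $d_1+d_2-1$ or $d_1$ when $c_0\tau\not\simeq\tau$) agrees with the statement.

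However, one step is stated backwards, and it is the step on which the whole positivity analysis rests. For a discrete series $\sigma_i\otimes\tau$ of the maximal Levi $GL_{n_i}(F)\times G(m)$ in $G(m+n_i)$, the Harish-Chandra/Silberger criterion says: $\mu_{\gamma_i}(\sigma)=0$ if and only if the Weyl element attached to $\gamma_i$ fixes the class of $\sigma_i\otimes\tau$ \emph{and} $\ii_{G(m+n_i),\,GL_{n_i}(F)\times G(m)}(\sigma_i\otimes\tau)$ is \emph{irreducible} --- not reducible, as you assert. Since membership of $\gamma_i$ in $\Delta'_\sigma$ is precisely what decides whether $\bar C_i$ survives the positivity test ($\bar C_i$ sends $\gamma_i$ to a negative root), your reversed criterion, applied literally, would put $\bar C_i$ in $R_\sigma$ exactly when the rank-one induced representation is irreducible --- the opposite of the theorem and of your own later bookkeeping. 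Relatedly, the criterion is genuinely sensitive to the parity of $n_i$, contrary to your closing remark: for odd $n_i$ the element of $W(\bG,\bA_\bM)$ attached to $\gamma_i$ acts on $M$ as $C_ic_0$, so when $c_0\tau\not\simeq\tau$ it does not fix $\sigma$, hence $\mu_{\gamma_i}(\sigma)\neq0$ and $\gamma_i\notin\Delta'_\sigma$ automatically; this is exactly why $d_2$ is defined by the condition $\sigma_i\simeq\tilde\sigma_i$ alone, with no reducibility requirement, whereas $d_1$ (even $n_i$, and all indices when $c_0\tau\simeq\tau$) does carry the reducibility requirement. Once the vanishing criterion is corrected, the rest of your outline reproduces the proof the paper cites.
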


The proof can be given verbatim as in \cite{goldberg-class}, but is simpler if we follow the proof of \cite{ban-goldberg-gspin}.



\begin{thebibliography}{CPSS11}

\bibitem[AJ12]{adams-vogan}
Jeffrey Adams and David A.~Vogan Jr, \emph{The contragredient}, arXiv:1201.0496.

\bibitem[Art89]{art89ast}
James Arthur, \emph{Unipotent automorphic representations: conjectures},
  Ast\'erisque (1989), no.~171-172, 13--71, Orbites unipotentes et
  repr{\'e}sentations, II. 

\bibitem[Art93]{art93}
\bysame, \emph{On elliptic tempered characters}, Acta Math. \textbf{171}
  (1993), no.~1, 73--138.

\bibitem[Art12]{art12}
\bysame, \emph{The endoscopic classification of representations: Orthogonal and
  symplectic groups}, to appear as a {C}olloquium {P}ublication of the
  {A}{M}{S} (2012).

\bibitem[BG12]{ban-goldberg-gspin} D.~Ban and D.~Goldberg, \emph{$R$--groups, elliptic representations, and parameters for  $GSpin$ groups}, submitted

\bibitem[BZ77]{bz77} I. N. Bernstein and A. V. Zelevinsky, \emph{Induced representations of reductive $p$--adic groups. I}, Ann. Sci. Ecole   Norm. Sup. (4) \textbf{10} (1977), 441-472.

\bibitem[Bor79]{bo79}
A.~Borel, \emph{Automorphic {$L$}-functions}, Proc. Sympos. Pure Math., XXXIII,
  Amer. Math. Soc., Providence, R.I., 1979, pp.~27--61. 

\bibitem[Bor91]{borel91}
Armand Borel, \emph{Linear algebraic groups}, second ed., Graduate Texts in
  Mathematics, vol. 126, Springer-Verlag, New York, 1991. 

\bibitem[Cho13]{choiy}
Kwangho Choiy, \emph{Transfer of {P}lancherel measures for unitary supercuspidal
  representations between {$p$}-adic inner forms}, to appear in Canadian
  Journal of Mathematics (Published electronically on February 21, 2013).
  
\bibitem[CG12]{goldberg-choiy-1}
Kwangho Choiy and David Goldberg, \emph{{$R$}-groups for {$p$}-adic inner forms of {$SL_n$}}, submitted; arXiv:1211.5054.

\bibitem[CL13]{chao-li}
Kuo~Fai Chao and Wen-Wei Li, \emph{Dual {$R$}--groups of the inner forms of
  {$SL(N)$}}, to appear in Pacific J. Math.; arXiv:1204.0132.

\bibitem[CPSS11]{cpssh11}
J.~W. Cogdell, I.~I. Piatetski-Shapiro, and F.~Shahidi, \emph{Functoriality for
  the quasisplit classical groups}, On certain {$L$}-functions, Clay Math.
  Proc., vol.~13, Amer. Math. Soc., Providence, RI, 2011, pp.~117--140.
  

\bibitem[DKV84]{dkv}
P.~Deligne, D.~Kazhdan, and M.-F. Vign{\'e}ras, \emph{Repr\'esentations des
  alg\`ebres centrales simples {$p$}-adiques}, Travaux en Cours, Hermann,
  Paris, 1984, pp.~33--117.

\bibitem[Gol94]{goldberg-class}
David Goldberg, \emph{Reducibility of induced representations for {${\rm
  Sp}(2n)$} and {${\rm SO}(n)$}}, Amer. J. Math. \textbf{116} (1994), no.~5,
  1101--1151. 
  
  \bibitem[GS98]{gs98} D.~Goldberg and F.~Shahidi\emph{On the Tempered Spectrum of Quasi-split Classical Groups},  Duke J.
Math., \textbf{92}; 255-294, 1998

\bibitem[Han04]{han04}
Marcela Hanzer, \emph{{$R$} groups for quaternionic {H}ermitian groups}, Glas.
  Mat. Ser. III \textbf{39(59)} (2004), no.~1, 31--48. 
  
\bibitem[HC81]{hc81}
Harish-Chandra, \emph{A submersion principle and its applications}, Proc.
  Indian Acad. Sci. Math. Sci. \textbf{90} (1981), no.~2, 95--102. 

\bibitem[Hen00]{he00}
Guy Henniart, \emph{Une preuve simple des conjectures de {L}anglands pour
  {${\rm GL}(n)$} sur un corps {$p$}-adique}, Invent. Math. \textbf{139}
  (2000), no.~2, 439--455. 

\bibitem[Her93]{herb93}
Rebecca~A. Herb, \emph{Elliptic representations for {${\rm Sp}(2n)$} and {${\rm
  SO}(n)$}}, Pacific J. Math. \textbf{161} (1993), no.~2, 347--358. 

\bibitem[HS11]{hs11}
Kaoru Hiraga and Hiroshi Saito, \emph{On {L}-packets for inner forms of {${\rm
  SL}(n)$}}, Mem. Amer. Math. Soc. \textbf{215} (2011), 97.

\bibitem[HT01]{ht01}
Michael Harris and Richard Taylor, \emph{The geometry and cohomology of some
  simple {S}himura varieties}, Annals of Mathematics Studies, vol. 151,
  Princeton University Press, Princeton, NJ, 2001, With an appendix by Vladimir
  G. Berkovich. 

\bibitem[JS04]{js04}
Dihua Jiang and David Soudry, \emph{Generic representations and local
  {L}anglands reciprocity law for {$p$}-adic {${\rm SO}_{2n+1}$}},
  Contributions to automorphic forms, geometry, and number theory, Johns
  Hopkins Univ. Press, Baltimore, MD, 2004, pp.~457--519.
  
\bibitem[Kot82]{kot82}
Robert~E. Kottwitz, \emph{Rational conjugacy classes in reductive groups}, Duke
  Math. J. \textbf{49} (1982), no.~4, 785--806.

\bibitem[Kot84]{kot84}
\bysame, \emph{Stable trace formula: cuspidal tempered terms}, Duke Math. J.
  \textbf{51} (1984), no.~3, 611--650.

\bibitem[Kot86]{kot86}
\bysame, \emph{Stable trace formula: elliptic singular terms}, Math. Ann.
  \textbf{275} (1986), no.~3, 365--399.

\bibitem[Kot97]{kot97}
\bysame, \emph{Isocrystals with additional structure. {II}}, Compositio Math.
  \textbf{109} (1997), no.~3, 255--339.
    

\bibitem[KS72]{ks72}
A.~W. Knapp and E.~M. Stein, \emph{Irreducibility theorems for the principal
  series}, Conference on {H}armonic {A}nalysis ({U}niv. {M}aryland, {C}ollege
  {P}ark, {M}d., 1971), Springer, Berlin, 1972, pp.~197--214. Lecture Notes in
  Math., Vol. 266. 

\bibitem[MS00]{ms00}
Goran Mui{\'c} and Gordan Savin, \emph{Complementary series for {H}ermitian
  quaternionic groups}, Canad. Math. Bull. \textbf{43} (2000), no.~1, 90--99.
 

\bibitem[MW03]{mw03}
Colette M{\oe}glin and Jean-Loup Waldspurger, \emph{Paquets stables de
  repr\'esentations temp\'er\'ees et de r\'eduction unipotente pour {${\rm
  SO}(2n+1)$}}, Invent. Math. \textbf{152} (2003), no.~3, 461--623. 

\bibitem[O74]{ol74} G. I. Ol'sanski\v i, \emph{Intertwining operators and complementary series in the class of representations induced from parabolic subgroups of the general linear group over a locally compact division algebra}, Math. USSR-Sb.\textbf{22} (1974), 217-254.

\bibitem[PR94]{pr94}
Vladimir Platonov and Andrei Rapinchuk, \emph{Algebraic groups and number
  theory}, Pure and Applied Mathematics, vol. 139, Academic Press Inc., Boston,
  MA, 1994. 

\bibitem[Sat71]{sa71}
I.~Satake, \emph{Classification theory of semi-simple algebraic groups}, New York, 1971, With an appendix by M. Sugiura, Notes prepared
  by Doris Schattschneider, Lecture Notes in Pure and Applied Mathematics, 3.
  

\bibitem[Sil78]{sil78}
Allan~J. Silberger, \emph{The {K}napp-{S}tein dimension theorem for {$p$}-adic
  groups}, Proc. Amer. Math. Soc. \textbf{68} (1978), no.~2, 243--246.
  

\bibitem[Sil79]{sil78cor}
\bysame, \emph{Correction: ``{T}he {K}napp-{S}tein dimension theorem for
  {$p$}-adic groups''\ [{P}roc. {A}mer. {M}ath. {S}oc. {\bf 68} (1978), no. 2,
  243--246;\ {MR} {\bf 58} \#11245]}, Proc. Amer. Math. Soc. \textbf{76}
  (1979), no.~1, 169--170. 
  
\bibitem[Spr98]{springer98}
T.~A. Springer, \emph{Linear algebraic groups}, second ed., Progress in
  Mathematics, vol.~9, Birkh\"auser Boston Inc., Boston, MA, 1998. 

\end{thebibliography}
\end{document}